\newtheorem{theorem}{Theorem}[section]
\newtheorem{lemma}[theorem]{Lemma}
\newtheorem{proposition}[theorem]{Proposition}
\newtheorem{corollary}[theorem]{Corollary}
\theoremstyle{definition}
\newtheorem{definition}[theorem]{Definition}
\newtheorem{example}[theorem]{Example}
\theoremstyle{remark}
\numberwithin{equation}{section}
\DeclareMathOperator{\prin}{prin}
\DeclareMathOperator{\repr}{\mathscr{P}\mbox{-spr}}
\DeclareMathOperator{\rep}{rep}
\DeclareMathOperator{\dm}{\underline{cdim}}
\DeclareMathOperator{\supp}{supp}
\DeclareMathOperator{\md}{mod}
\DeclareMathOperator{\Hom}{Hom}
\DeclareMathOperator{\Supp}{supp}
\DeclareMathOperator{\csupp}{csupp}
\def\P{\mathscr{P}}
\newcommand\colorb[1]{\color{red}{\bm #1}}
\newcommand\smrud[1]{%
  \begingroup\setlength\arraycolsep{0pt}\renewcommand{\arraystretch}{0.5}\Huge\ensuremath{\begin{matrix}  #1\end{matrix}\arrow[rd]\arrow[ru]\endgroup}}
\newcommand\sm[1]{%
  \begingroup\setlength\arraycolsep{0pt}\renewcommand{\arraystretch}{0.5}\Huge\ensuremath{\begin{matrix}#1\end{matrix}\endgroup}}
\newcommand\smru[1]{%
  \begingroup\setlength\arraycolsep{0pt}\renewcommand{\arraystretch}{0.5}\Huge\ensuremath{\begin{matrix}#1\end{matrix}\arrow[ru]\endgroup}}
\newcommand\smrd[1]{%
  \begingroup\setlength\arraycolsep{0pt}\renewcommand{\arraystretch}{0.5}\Huge\ensuremath{\begin{matrix}#1\end{matrix}\arrow[rd]\endgroup}}
\newcommand\smr[1]{%
  \begingroup\setlength\arraycolsep{0pt}\renewcommand{\arraystretch}{0.4}\Huge\ensuremath{\begin{matrix}#1\end{matrix}\arrow[r]\endgroup}}
\newcommand{\gon}[4]{\begin{tikzpicture}
\tkzDefPoint(0,0){x}\tkzDefPoint(#1,0){A}
\tkzDefPointsBy[rotation=center x angle 360/10](A,B,C,D,E,F,G,H,I){B,C,D,E,F,G,H,I,J}
\tkzDrawPoints[fill=blue,size=3,color=black](A,B,C,D,E,F,G,H,I,J)
\tkzDrawPolygon[ultra thin](A,B,C,D,E,F,G,H,I,J)
\tkzDrawPolygon[thin,color=blue](#2,#3)
\tkzLabelSegment[above](#2,#3){#4}
\end{tikzpicture}
}
\newcommand{\gonred}[4]{\begin{tikzpicture}
\tkzDefPoint(0,0){x}\tkzDefPoint(#1,0){A}
\tkzDefPointsBy[rotation=center x angle 360/10](A,B,C,D,E,F,G,H,I){B,C,D,E,F,G,H,I,J}
\tkzDrawPoints[fill=blue,size=3,color=black](A,B,C,D,E,F,G,H,I,J)
\tkzDrawPolygon[red](A,B,C,D,E,F,G,H,I,J)
\tkzDrawPolygon[thin,color=blue](#2,#3)
\tkzLabelSegment[above](#2,#3){#4}
\end{tikzpicture}
}
\newcommand{\gone}[4]{\begin{tikzpicture}
\tkzDefPoint(0,0){x}\tkzDefPoint(#1,0){A}
\tkzDefPointsBy[rotation=center x angle 360/10](A,B,C,D,E,F,G,H,I){B,C,D,E,F,G,H,I,J}
\tkzDrawPolygon[thin,color=blue](#2,#3)
\tkzLabelSegment[left](#2,#3){#4}
\end{tikzpicture}
}
\newcommand{\gonu}[4]{\begin{tikzpicture}
\tkzDefPoint(0,0){x}\tkzDefPoint(#1,0){A}
\tkzDefPointsBy[rotation=center x angle 360/9](A,B,C,D,E,F,G,H){B,C,D,E,F,G,H,I}
\tkzDrawPoints[fill=blue,size=3,color=black](A,B,C,D,E,F,G,H,I)
\tkzDrawPolygon(A,B,C,D,E,F,G,H,I)
\tkzDrawPolygon[thin,color=blue](#2,#3)
\tkzLabelSegment[above](#2,#3){#4}
\end{tikzpicture}
}
\newcommand{\gonured}[4]{\begin{tikzpicture}
\tkzDefPoint(0,0){x}\tkzDefPoint(#1,0){A}
\tkzDefPointsBy[rotation=center x angle 360/9](A,B,C,D,E,F,G,H){B,C,D,E,F,G,H,I}
\tkzDrawPoints[fill=blue,size=3,color=black](A,B,C,D,E,F,G,H,I)
\tkzDrawPolygon[red](A,B,C,D,E,F,G,H,I)
\tkzDrawPolygon[thin,color=blue](#2,#3)
\tkzLabelSegment[above](#2,#3){#4}
\end{tikzpicture}
}
\newcommand{\gonuo}[4]{\begin{tikzpicture}
\tkzDefPoint(0,0){x}\tkzDefPoint(#1,0){A}
\tkzDefPointsBy[rotation=center x angle 360/9](A,B,C,D,E,F,G,H){B,C,D,E,F,G,H,I}
\tkzDrawPolygon[thin,color=blue](#2,#3)
\tkzLabelSegment[above](#2,#3){#4}
\end{tikzpicture}
}
\begin{document}

\title[A geometric realization of socle-projective categories...]{A geometric realization of socle-projective categories for  posets of type $\mathbb{A}$}

\author{Ralf Schiffler}
\address{Department of Mathematics, University of Connecticut, Storrs, CT 06269-3009, USA}
\email{schiffler@math.uconn.edu}
\thanks{The first author was supported  by NSF CAREER Grant DMS-1254567, NSF Grant DMS-1800860 and by the
University of Connecticut.}



\author{Robinson-Julian Serna}
\address{School of Mathematics and Statistics, Pedagogical and Technological University of Colombia, Tunja, Boyacá 150001}
\email{robinson.serna@uptc.edu.co}
\thanks{The second author was supported in part by Colciencias Conv. 727.  He would  like to thank Department of Mathematics at University of Connecticut for hospitality and support  during his visit in Fall 2018.}



\dedicatory{To the memory of A.G. Zavadskij  (1946–2012)}

\keywords{category of diagonals, cluster category, cluster algebra, poset of type $\mathbb{A}$, socle-projective representation, Auslander Reiten quiver.}

\begin{abstract}
This paper establishes a link between the theory of cluster  algebras and the theory of representations of partially ordered sets.  We introduce  a class of posets   by requiring avoidance of  certain types of peak-subposets and show that these posets can be realized as the posets of quivers of type $\mathbb{A}$ with certain additional arrows.  This class of posets is therefore called \emph{posets of type $\mathbb{A}$}.  We then give a geometric realization of the category of finitely generated socle-projective modules over the incidence algebra  of a poset of type $\mathbb{A}$  as a combinatorial category of certain diagonals of a regular polygon. This construction is inspired by the realization of the cluster category of type $\mathbb{A}$ as the category of all diagonals by Caldero, Chapoton and the first author  \cite{schiffler1}.\\ 

We also study the subalgebra of the cluster algebra generated by those cluster variables that correspond to the socle-projectives under the above construction. We give a sufficient condition for when this subalgebra is equal to the whole cluster algebra.
\end{abstract}
\maketitle
\tableofcontents
\section{Introduction}
Geometric realizations  of  algebraic structures using the combinatorial geometry of  surfaces have been developed by different authors in  recent years (for instance  see \cites{karin1,karin2,karin3,karin4, schiffler1,brustle,David-Roesler,Demonet,Demonet1,opper,rschiffler,BGMS}). 
This approach provides geometric and  combinatorial tools for the study of the objects and morphisms in the category. It plays  an important role in cluster-tilting theory and in representation theory in general.  For example, the category $\mathcal{C}$ of diagonals (not including boundary edges) in a regular polygon  $\Pi_{n+3}$ with $n+3$ vertices  introduced by  Caldero, Chapoton and the first author \cite{schiffler1} is a geometric realization of the  cluster category  of type  $\mathbb{A}_n$; which, in greater generality, was defined simultaneously by Buan-Marsh-Reiten-Reineke-Todorov \cite{BMRRT}. They defined cluster categories as  orbit categories of the bounded derived category of hereditary algebras. As an application in \cite{schiffler1}, the module category  of a cluster-tilted algebra of type $\mathbb{A}_n$  is  
described by a category of diagonals $\mathcal{C}_T$ in  $\Pi_{n+3}$, where $T$ is a triangulation of  $\Pi_{n+3}$. \\

The present work  links  the theory of cluster algebras \cite{fomin} and cluster categories with the theory of  representations  of  partially ordered sets (in short, posets)  through a  geometric realization inspired by the one in \cite{schiffler1}. \\

The representation theory of posets  was established parallel to the development of the representation theory of Artin algebras; the notion of a matrix representation of a poset $\P$ over an  algebraically closed field $k$ was introduced in the 1970s by Nazarova and  Roiter \cite{nazarova}. Aside from matrix representations of a poset $\P$, the concept of $\P$-space (or representation of $\P$) over  a field  $k$ was introduced by Gabriel \cite{gabriel1}  in connection with the investigation  of representations  of quivers.  If $(\P,\preceq)$ is a finite poset, the category of $\P$-spaces of the poset $\P$ is nothing else than the category of socle-projective modules of the incidence algebra $k\P^{\star}$ of the enlarged poset $\P^{\star}=\P\cup\{\star\}$ such that $x\prec \star$ for each $x\in\P$;  however, there are  genuine methods in representation theory of posets such as the differentiation algorithms \cite{Zavadskijp,canadas1,canadas2,simson}. In a more general situation, Simson studied the category of peak $\P$-spaces which is identified with the category of socle-projective $k\P$-modules, where $k\P$ is the incidence algebra of a poset $\P$  \cite{simson3,simson4}.  He gave the finiteness criterion for those categories while his student J. Kosakowska classified the sincere posets of finite representation type \cite{justina1,justina2,justina}. Moreover, the tameness criterion was given by Kasjan and Simson in \cite{kasjan}. In general, the theory of representations of posets plays an important role in the study of lattices over orders, in the classification of indecomposable lattices over  some simple curve  singularities and in the classification of abelian groups of finite rank (see \cites{Arnold,simson}).\\

In this paper, we introduce  a  class of  posets  which we call  posets of type $\mathbb{A}$. Roughly speaking, they are posets with $n\geq 1$ elements  whose category of socle-projective representations is embedded in the category of representations of a Dynkin quiver of type $\mathbb{A}_n$. We characterize these posets as those not allowing a peak-subposet of one of four types, see Definition~\ref{defposetypeA}. Then, we define a subcategory $\mathcal{C}_{(T,F)}$ of the category  $\mathcal{C}_T$ of diagonals  of a triangulated polygon $\Pi_{n+3}$ with $n+3$ vertices to give a geometric realization of  the category of socle-projective representations 
 $\md_{sp}(k\P)$ of posets $\P$ of type $\mathbb{A}$, where $T$ is a triangulation of $\Pi_{n+3}$ associated to a Dynkin quiver $Q$ of type $\mathbb{A}_n$ and $F$ is a set of additional arrows for $Q$.  We show that there is an equivalence of categories $\mathcal{C}_{(T,F)}\to\md_{sp}(k\P)$ in Theorem~\ref{omega}.  Moreover, we define a subalgebra $\mathcal{A}(\P)$  of the cluster algebra $\mathcal{A}=\mathcal{A}(\bm x, Q)$ generated by the cluster variables associated to diagonals in $\mathcal{C}_{(T,F)}$ and diagonals in $T$; then,  we establish that if $\P$ is the poset whose Hasse quiver is a Dynkin quiver $Q$ of type $\mathbb{A}_n$ then  $\mathcal{A}=\mathcal{A}(\P)$  in Theorem~\ref{subalgebra} .\\

The paper is organized as follows:  In section \ref{preliminaries}, we recall some notation and results about  categories of diagonals in regular polygons  and  categories of socle-projective representations of posets. In section 3, we define and study  posets of type $\mathbb{A}$. Section \ref{Catdiagonals} is devoted to proving our main result,  Theorem \ref{omega}. Finally, the last section deals with the subalgebras $\mathcal{A}(\P)$ of  the cluster algebra $\mathcal{A}$.

\section{Preliminaries} \label{preliminaries}
\subsection{Category of diagonals $\mathcal{C}_T$ }\label{Catdiagonals0}

We recall some results and notation of \cite{schiffler1} (see also Chapter $3$ in \cite{schiffler}) which are used in this work. A \textit{diagonal} in a  regular  polygon  is a straight line segment that joins two of the vertices and goes through the interior of the polygon.  A \textit{triangulation} of the polygon is a maximal set of non-crossing
diagonals. Such a triangulation cuts the polygon into triangles.\\

Let  $T=\lbrace \tau_1,\dots,\tau_n\rbrace$  be a triangulation of a regular polygon $\Pi_{n+3}$ (or $(n+3)$-gon) with $n+3$ vertices  and let $\gamma$ and $\gamma'$ be diagonals that are not in $T$.  The diagonal $\gamma$ is related to the diagonal $\gamma'$ by a \textit{pivoting elementary move} if they share a vertex on the boundary (this vertex is called  \textit{pivot}), the other vertices of $\gamma$ and $\gamma'$ are the vertices of a boundary edge of the polygon and the rotation around the pivot is positive (for the trigonometric direction) from $\gamma$  to $\gamma'$. Let $P_{v}:\gamma\to\gamma'$ denote the pivoting elementary move from $\gamma$ to $\gamma'$ with pivot $v$. Compositions  of pivoting elementary moves are called \textit{pivoting paths}.   \\

The combinatorial $k$-linear additive category $\mathcal{C}_T$ of diagonals is defined as follows: The objects are positive integral linear combinations   of diagonals that are not in $T$. By additivity, it is enough define morphisms between diagonals. To do that, we recall that  the \textit{mesh relations} are the equivalence relation between pivoting paths  induced by  identifying  every couple  of pivoting paths of the form  $$\gamma\xrightarrow{P_{v_1}}\beta\xrightarrow{P_{v'_2}}\gamma'=\gamma\xrightarrow{P_{v_2}}\beta'\xrightarrow{P_{v'_1}}\gamma'$$ 
where $v_1\neq v'_2$ and $v_2\neq v'_1$ (see Figure \ref{mesh0}). In these relations, diagonals in $T$ or boundary edges are allowed with the  following convention: If one of the intermediate edges ($\beta$ or $\beta'$) is either boundary edge or diagonal in $T$, the corresponding term in the mesh relation is replaced by zero. Thus, the space of morphisms from a diagonal $\gamma\notin T$ to a diagonal $\gamma'\notin T$ is the quotient of the vector space over $k$ spanned by pivoting paths from $\gamma$ to $\gamma'$ modulo the  mesh relations.\\

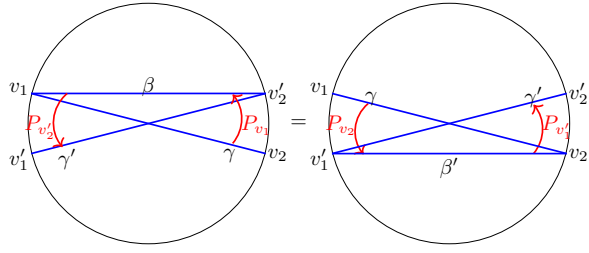
\begin{figure}[h]
\begin{adjustbox}{scale=0.8,center}
\begin{tikzpicture}
    
    \draw (0,0) circle (2cm) ;
\draw (5,0) circle (2cm);  
\node (0) at (2.5,0) {$=$};
  
   \draw [line width = 0.8pt, blue] (-1.95,.5) -- (1.95,-.5);
  \draw [line width = 0.8pt, blue] (-1.95,-0.5) -- (1.95,0.5);

 \draw [line width = 0.8pt, blue] (-1.95,.5) -- (1.95,0.5);
 

\draw (-2.15,.8) node[below] {$v_1$};
\draw (2.15,-.3) node[below] {$v_2$};
\draw (1.35,-.3) node[below] {$\gamma$};
\draw (-1.35,-.3) node[below] {$\gamma'$};

\draw (-2.15,-.3) node[below] {$v'_1$};
\draw (2.15,0.8) node[below]{$v'_2$};
\draw (0,.9) node[below] {$\beta$};

\draw [->, red, thick] (1.4,-0.33) arc (-40:40:18pt);
\draw [->, red, thick] (-1.35,0.5) arc (130:220:18pt);
\draw (1.8,0.25) node[below, red] {$P_{v_1}$};
\draw (-1.8,0.25) node[below, red] {$P_{v'_2}$};

\draw [->, red, thick] (6.4,-0.5) arc (-40:40:18pt);
\draw [->, red, thick] (3.65,0.33) arc (130:220:17pt);
\draw (6.8,0.25) node[below, red] {$P_{v'_1}$};
\draw (3.2,0.25) node[below, red] {$P_{v_2}$};


   \draw [line width = 0.8pt, blue] (3.05,.5) -- (6.95,-.5);
  \draw [line width = 0.8pt, blue] (3.05,-0.5) -- (6.95,0.5);

 \draw [line width = 0.8pt, blue] (3.05,-.5) -- (6.95,-0.5);
 

\draw (2.85,.8) node[below] {$v_1$};
\draw (7.15,-.3) node[below] {$v_2$};
\draw (2.85,-.3) node[below] {$v'_1$};
\draw (7.15,0.8) node[below]{$v'_2$};
\draw (3.7,0.65) node[below] {$\gamma$};
\draw (6.4,0.73) node[below] {$\gamma'$};
\draw (5,-.45) node[below] {$\beta'$};

\end{tikzpicture}
\end{adjustbox}
\caption{Mesh relations $P_{v'_2}P_{v_1}=P_{v'_1}P_{v_2}$ in $\mathcal{C}_T$} \label{mesh0}
\end{figure}

The following lemma describes the relative positions of diagonals $\gamma$ and $\gamma'$,  when  there exist a nonzero morphism between them.

\begin{lemma}\cite[Lemma 2.1]{schiffler1}\label{Lralf1} The vector space $\Hom_{\mathcal{C}_T}(\gamma,\gamma')$ is nonzero if and only if there exists a diagonal $\tau_i\in T$ such that $\tau_i$ crosses the diagonals $\gamma$ and $\gamma'$ and  the relative positions of them are as in Figure \ref{relativeposition}. That is, let $v_1,v_2$ be the endpoints of $\tau_i$ and $u_1,u_2$ (respectively $u'_1,u'_2$) be the endpoints of $\gamma$ (respectively $\gamma'$). Then ordering the vertices of the polygon in the positive trigonometric direction starting at $v_1$, we have $v_1<u_1\leq u'_1<v_2<u_2\leq u'_2$. In this case, $\Hom_{\mathcal{C}_T}(\gamma,\gamma')$ is of dimension one.
\end{lemma}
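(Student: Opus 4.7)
My plan is to analyze morphisms in $\mathcal{C}_T$ by tracking pivoting paths modulo mesh relations, first establishing that $\dim_k \Hom_{\mathcal{C}_T}(\gamma,\gamma')\le 1$ in general, and then characterizing when this dimension equals one via the geometric condition on $\tau_i$.

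First I would observe the basic \emph{monotonicity} of pivoting moves: a pivoting elementary move $P_v\colon \gamma\to\gamma'$ fixes the pivot $v$ and advances the other endpoint by exactly one vertex in the positive trigonometric direction. Consequently, along any pivoting path from $\gamma=(u_1,u_2)$ to $\gamma'=(u'_1,u'_2)$, each endpoint of $\gamma$ rotates monotonically to the corresponding endpoint of $\gamma'$. Since pivoting moves on the two distinct endpoints commute modulo mesh relations (this is exactly what Figure~\ref{mesh0} encodes), any two pivoting paths with the same source and target are identified in $\mathcal{C}_T$, giving the bound $\dim_k \Hom_{\mathcal{C}_T}(\gamma,\gamma')\le 1$.

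For the sufficiency direction, assume the configuration $v_1<u_1\le u'_1<v_2<u_2\le u'_2$ holds for some $\tau_i=(v_1,v_2)\in T$. I would construct an explicit pivoting path from $\gamma$ to $\gamma'$ by first pivoting around $u_1$, moving $u_2$ one step positively at a time until it reaches $u'_2$, and then pivoting around $u'_2$, moving $u_1$ positively until it reaches $u'_1$. By the assumed inequalities, every intermediate diagonal has one endpoint strictly between $v_1$ and $v_2$ and the other strictly between $v_2$ and $v_1$ in the positive direction; thus every intermediate diagonal crosses $\tau_i$. In particular, no intermediate edge is a boundary edge or a diagonal in $T$ (both would be forbidden from crossing $\tau_i\in T$), so the zero convention of the mesh relations never kills the composition and the resulting morphism is nonzero. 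Combined with the upper bound above, this gives $\dim\Hom_{\mathcal{C}_T}(\gamma,\gamma')=1$.

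For the necessity direction, assume $\Hom_{\mathcal{C}_T}(\gamma,\gamma')\ne 0$ and fix a pivoting path $\gamma=\gamma_0\to\gamma_1\to\cdots\to\gamma_m=\gamma'$ whose image in $\mathcal{C}_T$ is nonzero. Each intermediate $\gamma_j$ must be a diagonal not in $T$ and not a boundary edge, otherwise some mesh-relation step would force the composition to vanish. Pick any $\tau_i\in T$ crossed by $\gamma=\gamma_0$; I would show by induction on $j$ that $\tau_i$ also crosses every $\gamma_j$, arguing that a pivoting move preserves the crossing with any $T$-diagonal that does not contain the pivot, and that the only alternative forces $\gamma_{j+1}$ to eventually coincide with a diagonal in $T$ or with a boundary edge, contradicting nonvanishing. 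In particular $\tau_i$ crosses $\gamma'$, and combining the crossing condition with the monotone rotation of endpoints yields the required ordering $v_1<u_1\le u'_1<v_2<u_2\le u'_2$ after labelling the vertices positively starting from $v_1$. The main obstacle is precisely this necessity step: producing a single $\tau_i\in T$ that crosses every intermediate diagonal. Once one shows that any attempted crossing over a $T$-diagonal during the pivoting path triggers a zero side of a mesh relation, the geometric ordering falls out of the monotone rotation established in the first step.
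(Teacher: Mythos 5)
This lemma is cited from \cite[Lemma 2.1]{schiffler1} without proof in the present paper, so I judge your attempt on its own merits. Your upper bound $\dim\le 1$ and your plan for sufficiency are reasonable, although as written the sufficiency step only checks that the intermediates \emph{on your chosen path} cross $\tau_i$; the zero convention in a mesh relation is triggered by the \emph{alternative} intermediate $\beta'$, which need not lie on that path, so the phrase ``the zero convention of the mesh relations never kills the composition'' is not yet justified. This is easily repaired: the same inequalities show that every diagonal $(a,b)$ with $u_1\le a\le u'_1$ and $u_2\le b\le u'_2$ crosses $\tau_i$, hence so does every intermediate of every pivoting path from $\gamma$ to $\gamma'$, alternative or not, and no mesh relation can annihilate the class.

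The necessity direction, however, rests on two false claims and does not go through. First, ``pick any $\tau_i\in T$ crossed by $\gamma$'' fails: in the heptagon with the fan triangulation $T=\{(1,3),(1,4),(1,5),(1,6)\}$, take $\gamma=(2,5)$ and $\gamma'=(3,6)$; then $\Hom_{\mathcal{C}_T}(\gamma,\gamma')\neq 0$ with witness $\tau_i=(1,4)$, yet $\gamma$ also crosses $(1,3)\in T$ while $\gamma'$ does not (they share the vertex $3$). Second, your inductive step, ``a pivoting move preserves the crossing with any $T$-diagonal that does not contain the pivot,'' is also false: in the same example the move $(2,6)\to(3,6)$ has pivot $6$, the $T$-diagonal $(1,3)$ does not contain $6$, and yet the crossing with $(1,3)$ is destroyed because the moving endpoint lands on the vertex $3$ of $(1,3)$. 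To make your strategy work you must both single out the \emph{correct} witness $\tau_i$ (for instance the $T$-diagonal crossed by $\gamma$ whose endpoint lying between $u_1$ and $u_2$ is as close to $u_2$ as possible, and then show this endpoint necessarily exceeds $u'_1$ when the morphism is nonzero) and strengthen the preservation claim to require that $\tau_i$ contain neither the pivot nor the new endpoint of the moved diagonal. As stated, the induction collapses the first time a moving endpoint hits a vertex of your chosen $\tau_i$, so necessity is not established.
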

\begin{figure}[h]
\begin{adjustbox}{scale=0.8,center}
\begin{tikzpicture}
    
    \draw [] circle (2cm) ;
   \draw [line width = 0.8pt, blue] (-1,1.7) -- (1,-1.7);
   \draw [line width = 0.8pt, blue] (-1.9,0.6) -- (1.9,0.5);

\draw [line width = 0.8pt, blue,dotted] (1,1.7) -- (0.5,-2);
\draw (-1.1,2.1) node[below] {$u_1$};
\draw (1.1,-1.7) node[below] {$u_2$};
\draw (-2.1,1) node[below] {$u'_1$};
\draw (2.14,0.88) node[below]{$u'_2$};
\draw (-0.6 ,1.6) node[below]{$\gamma$};
\draw (-1.4,1.1) node[below]{$\gamma'$};
\draw (1.1,2.1) node[below] {$v_1$};
\draw (0.5,-1.9) node[below] {$v_2$};
\draw (0.76 ,1.6) node[below]{$\tau_i$};
 \end{tikzpicture}
\end{adjustbox}
\caption{Relative position} \label{relativeposition}
\end{figure}
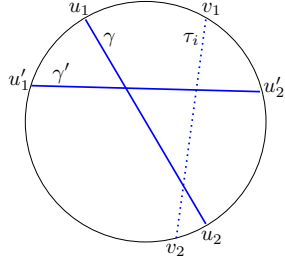

A triangulation $T$ of the $(n+3)$-gon is said to be \textit{triangulation without internal triangles} if each triangle has at least one side on the boundary of the polygon. It is important to recall that every  triangulation $T$ of the $(n+3)$-gon gives rise to a cluster-tilted algebra $kQ_T\slash I$  of type $\mathbb{A}_n$, where $I$ is the two-sided ideal generated by all length two subpath of oriented 3-cycles in $Q_T$, and every cluster-tilted algebra is of this form. In particular, every Dynkin quiver of type $\mathbb{A}_n$ corresponds to a triangulation without internal triangles. The map  associates a  quiver $Q_T$   to the triangulation $T=\lbrace \tau_1,\dots,\tau_n\rbrace$ of $\Pi_{n+3}$ as follows: The vertices of $Q_T$ are $(Q_T)_0= \lbrace1, 2,\dots, n\rbrace$ and there is an arrow $x\rightarrow y$ in $(Q_T)_1$ precisely if the diagonals $\tau_x$ and $\tau_y$ bound a triangle in which $\tau_y$ lies counter-clockwise from $\tau_x$ (see Figure \ref{counterclockwise} and  Example \ref{triangulation1}).\\ 

\begin{figure}[h]

\begin{tikzpicture}
\draw (0,0) 
  -- (2,0) 
  -- (1,1.7)
  -- cycle;
  \node (A) at (2.5,0.5) {$\rightsquigarrow$};
 \node (B) at (3.5,0.5) {$x\rightarrow y$};
 \node (C) at (1.7,0.85) {$_{\tau_x}$};
  \node (D) at (0.25,0.85) {$_{\tau_y}$};
\end{tikzpicture}
\caption{$\tau_y$ is counter-clockwise from $\tau_x$} \label{counterclockwise}
\end{figure}
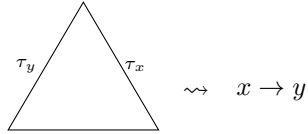

A vertex $x\in(Q_T)_0$ belongs to the \textit{support} $\supp\gamma$ of a diagonal $\gamma\notin T$ if the diagonal $\tau_x \in T$  crosses $\gamma$. The following lemma permits to see diagonals that are not in $T$ as indecomposable objects in the module category of the cluster-tilted algebra $kQ_T\slash I$ of type $\mathbb{A}$. 

\begin{lemma}\cite[Lemma 3.2]{schiffler1}\label{Lralf2} Let $\gamma$ be a diagonal which does not belong to $T$. The set $\supp\gamma$ is connected as a subset of the quiver $Q_T$.
\end{lemma}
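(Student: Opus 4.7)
The plan is to walk along $\gamma$ from one endpoint to the other, list the diagonals of $T$ crossed along the way in order, and exhibit edges of $Q_T$ between consecutive entries of that list.

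First I would order the elements of $\supp\gamma$ as $x_1, x_2, \dots, x_k$ so that the intersections $\gamma\cap\tau_{x_1}, \gamma\cap\tau_{x_2}, \dots, \gamma\cap\tau_{x_k}$ occur in this order along $\gamma$, starting from a chosen endpoint. Each such intersection is a single interior point of $\tau_{x_i}$ because $\gamma$ is a chord of the convex polygon and its interior avoids the vertices. Since $T$ subdivides $\Pi_{n+3}$ into triangles, the portion of $\gamma$ strictly between the two consecutive crossing points $\gamma\cap\tau_{x_i}$ and $\gamma\cap\tau_{x_{i+1}}$ lies inside a single triangle $\Delta_i$ of $T$; the only edges of the triangulation that meet $\Delta_i$ at an interior point are its own three sides, so both $\tau_{x_i}$ and $\tau_{x_{i+1}}$ must be sides of $\Delta_i$.

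Next I would apply the construction of $Q_T$ recalled just before the lemma: any two distinct diagonals of $T$ that are sides of a common triangle of $T$ give rise to an arrow in $Q_T$, oriented from the clockwise to the counter-clockwise side. Applied to the pair $\tau_{x_i}, \tau_{x_{i+1}}$, this yields an arrow between $x_i$ and $x_{i+1}$ in $Q_T$ (in one direction or the other). Hence $x_1, x_2, \dots, x_k$ is a walk in the underlying graph of $Q_T$ passing through every element of $\supp\gamma$, which is what connectedness of $\supp\gamma$ as a subset of $Q_T$ means.

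The only nontrivial point is the geometric observation that between consecutive crossings $\gamma$ stays inside a single triangle and therefore enters and leaves that triangle through the very diagonals $\tau_{x_i}$ and $\tau_{x_{i+1}}$; once that is checked, the connectedness statement is a direct translation via the definition of $Q_T$. I expect no serious obstacle beyond setting up this geometric bookkeeping cleanly.
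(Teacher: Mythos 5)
Your proof is correct, and it is essentially the argument given in the cited source \cite{schiffler1}: ordering the crossings along $\gamma$, noting that consecutive crossing diagonals bound a common triangle of $T$, and then translating shared triangles into arrows of $Q_T$. The only unstated (and harmless) point is that $\supp\gamma$ is nonempty whenever $\gamma\notin T$, since $T$ is maximal, so the ordered list is well-defined.
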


In \cite{schiffler1}, the authors defined a $k$-linear additive functor $\Theta$ from $\mathcal{C}_T$ to the category $\md kQ_T\slash I$ of finitely generated $kQ_T\slash I$-modules.  The image of a diagonal $\gamma\notin T$  is the representation $M^{\gamma}=(M_x^{\gamma},f_{\alpha}^{\gamma})$ defined as follows:   For each vertex $x$ in $Q_T$,
    $$M_x^{\gamma}= \left\{ \begin{array}{ll}
k &   \mbox{if } x\in\supp\gamma, 
\\ 0 &  \mbox{otherwise.} 
\end{array}
\right. $$
For any arrow $\alpha:x\rightarrow y$ in $Q_T$, 

$$f_{\alpha}^{\gamma}= \left\{ \begin{array}{lc}
\mbox{id}_k &   \mbox{if }  M_x^{\gamma}= M_y^{\gamma}=k,\\
 0 &  \mbox{otherwise.} 
\end{array}
\right.$$ 

Moreover, for any pivoting elementary move $P:\gamma\to\gamma'$ they defined the morphism $\Theta(P)$ from  $(M_x^{\gamma},f_{\alpha}^{\gamma})$ to  $(M_x^{\gamma'},f_{\alpha}^{\gamma'})$ to be $\mbox{id}_k$ whenever possible and $0$ otherwise. The category $\mathcal{C_T}$ of diagonals  gives a geometric realization of the category of finitely generated $kQ_T\slash I$-modules in the following sense.  
\begin{theorem}\cite[Theorems 4.4 and 5.1]{schiffler1}\label{TRalf}
\begin{itemize}
\item[(a)]The functor $\Theta$  is an equivalence of categories.  
\item[(b)] The irreducible morphisms of $\mathcal{C}_T$ are direct sums of the generating morphisms given by pivoting elementary moves.
\item[(c)]The mesh relations of $\mathcal{C}_T$ are the mesh relations of the AR-quiver of $\mathcal{C}_T$.
\item[(d)]The AR-translation is given on diagonals by $r^{-}$. Here, $r^-$ (respectively $r^+$) denotes the clockwise (respectively counter-clockwise) elementary rotation of the regular polygon $\Pi_{n+3}$.
\item[(e)] The projective indecomposable objects of $\mathcal{C}_T$ are the diagonals in $r^{+}(T)$.
\item[(f)] The injective indecomposable objects of $\mathcal{C}_T$ are  the diagonals in $r^{-}(T)$. \end{itemize}
\end{theorem}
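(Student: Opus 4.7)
The plan is to establish (a) first, and then deduce (b)--(f) from the Auslander--Reiten theory of $\md kQ_T/I$.

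For essential surjectivity in part (a), Lemma~\ref{Lralf2} shows $\supp\gamma$ is a connected full subquiver of $Q_T$, so $M^\gamma$ is an indecomposable string module. A direct count gives $\binom{n+3}{2} - (n+3) - n = \tfrac{n(n+1)}{2}$ diagonals not in $T$, matching the number of indecomposables in the cluster-tilted algebra $kQ_T/I$ of type $\mathbb{A}_n$; since $\gamma\mapsto\supp\gamma$ is injective, $\Theta$ hits every indecomposable. For fully faithfulness, Lemma~\ref{Lralf1} says $\dim\Hom_{\mathcal{C}_T}(\gamma,\gamma')\leq 1$; I match this to the Hom between the corresponding string modules by checking that the crossing configuration of Figure~\ref{relativeposition} translates exactly into the existence of a common ``overlap substring'' giving a canonical nonzero map. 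It then remains to show that $\Theta$ sends a nonzero pivoting path to a nonzero morphism---verifiable on the component maps at each vertex of $Q_T$---and that the mesh relations of Figure~\ref{mesh0} are sent to zero, which reduces to a local $2\times 2$ check distinguishing the generic case (both composites agree as identity-on-overlap maps) from the degenerate case in which an intermediate edge is in $T$ or on the boundary.

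Parts (b)--(d) follow by matching $\mathcal{C}_T$ with the AR-quiver of $\md kQ_T/I$. A pivoting elementary move $P_v:\gamma\to\gamma'$ modifies $\supp\gamma$ by adding or removing a single vertex at one end of the corresponding string, and such hook/cohook maps are exactly the irreducible morphisms between string modules, giving (b). For (d), I would construct the almost split sequence ending at $M^\gamma$ with middle term $M^\beta\oplus M^{\beta'}$, where $\beta,\beta'$ are the two pivots of $\gamma$, and left term $M^{r^-(\gamma)}$---the simultaneous clockwise rotation of both endpoints of $\gamma$---identifying the AR-translate with $r^-$. The meshes of the AR-quiver are then precisely the relations defining $\mathcal{C}_T$ in Figure~\ref{mesh0}, giving (c).

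For (e) and (f), an indecomposable is projective exactly when its AR-translate is zero. Using (d), this means $r^-(\gamma)$ is either in $T$ or on the boundary, i.e., represents zero in $\mathcal{C}_T$, which forces $\gamma\in r^+(T)$; one checks that indeed each diagonal in $r^+(T)$ is a diagonal (not a boundary edge), so all of them arise. The dual statement $\tau^{-1}(\gamma)=0 \iff \gamma\in r^-(T)$ gives the injectives. The main obstacle will be the fully-faithful part of (a): rigorously translating the geometric Hom-dimension in Lemma~\ref{Lralf1} into the module-theoretic Hom between string modules of $kQ_T/I$, and verifying that the mesh relations of Figure~\ref{mesh0} generate the full kernel of $\Theta$ on morphism spaces, requires careful case analysis including the degenerate situations where intermediate pivots coincide with boundary edges or diagonals of $T$.
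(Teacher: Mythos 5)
This statement is cited from \cite{schiffler1} (Caldero--Chapoton--Schiffler, Theorems 4.4 and 5.1) and is not proved in the present paper, so there is no in-paper argument to compare against; I will instead assess your sketch on its own terms and against the original CCS strategy, which it broadly follows (explicit functor $\Theta$, generator/relation analysis on pivoting paths, then geometric AR theory).

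Two steps in your outline carry unacknowledged weight. First, you assert that $\gamma\mapsto\supp\gamma$ is injective; this is true, but it is a genuine combinatorial statement about triangulations (distinct non-$T$ diagonals cross distinct sets of $T$-diagonals) that you should either prove or cite --- it is not an immediate consequence of Lemma~\ref{Lralf2}. Second, your essential-surjectivity argument by counting needs a non-circular source for the fact that $kQ_T/I$ has exactly $\tfrac{n(n+1)}{2}$ indecomposables. That count is usually obtained from the cluster category of type $\mathbb{A}_n$ (number of diagonals minus $n$ for the shifted tilting object), which is fine provided you invoke Buan--Marsh--Reiten rather than the geometric model itself; stating this explicitly would remove the appearance of circularity. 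Alternatively, one can avoid the count entirely by showing directly (as CCS do) that every indecomposable of the gentle algebra $kQ_T/I$ is a thin string module whose support is a connected ``zig-zag'' in $Q_T$, and then exhibiting the unique diagonal realizing each such support.

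The remaining points are sound at the level of a plan: hook/cohook maps giving the irreducible morphisms (Butler--Ringel) for (b), the identification of $\tau$ with $r^-$ via an explicit almost split sequence for (d), meshes matching for (c), and the characterizations $\tau M=0\Leftrightarrow M$ projective, $\tau^{-1}M=0\Leftrightarrow M$ injective for (e)--(f). You correctly anticipate that the bulk of the rigor lies in faithfulness/fullness of $\Theta$ together with verifying that Figure~\ref{mesh0} generates the full kernel on morphisms, including the degenerate cases where an intermediate edge lies in $T$ or on the boundary; this is indeed where CCS spend most of their effort. One small addition worth making explicit in (e): diagonals in $r^+(T)$ are automatically not in $T$ (each $\tau$ crosses $r^+(\tau)$), so the projectives are well-defined objects of $\mathcal{C}_T$.
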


\subsection{Socle-projective modules over incidence algebras}
In this section, we recall some of the main results regarding socle-projective modules over incidence algebras of  posets due to Simson \cites{simson3, simson4}.\\

We denote by $(\P,\preceq)$ a finite  partially ordered set (in short, poset) with respect to the partial order $\preceq$. We shall write $x\prec y$ if $x\preceq y$ and $x\neq y$. For the sake of simplicity we write $\P$ instead of $(\P,\preceq)$. Let $\max\P$ (respectively $\min\P$) be the set of all maximal  (respectively minimal) points of $\P$. A poset $\P$ is called an \textit{r-peak poset} if $\left| \max\P\right|=r$. We recall  that a full subposet $\P'$ of $\P$  is said to be a \textit{peak-subposet} if $\max\P'\subseteq\max\P$. In the sequel, we denote  $\P^{-}=\P\setminus\max\P $. \\

The \textit{Hasse diagram} of $\P$ is obtained as follows: One represents each element of $\P$ as a vertex  in the plane and draws a line segment or curve that goes upward from $x$ to $y$ whenever $y$ \textit{covers} $x$, that is, whenever $x \prec y$ and there is no $z$ such that $x \prec z \prec y$. These lines may cross each other but must not touch any vertices other than their endpoints. Such a diagram, with labeled vertices, uniquely determines its partial order.

\begin{example}\label{onepeak} Given the  one-peak poset $\P$ whose  Hasse diagram is \begin{center}
\begin{tikzpicture}
\node (1) at (0,0) {$\star$};
\node [left  of=1, node distance=0.15cm] (l2)  {$_{_3}$};
\node [below right  of=1, node distance=0.8cm] (2)  {$\circ$};
\node [right  of=2, node distance=0.18cm] (l2)  {$_{_4}$};

\node [below left of=1, node distance=0.8cm] (1')  {$\circ$};
\node [left  of=1', node distance=0.15cm] (l1)  {$_{_2}$};

\node [below  of=1, node distance=1.2cm] (2')  {$\circ$};
\node [right  of=2', node distance=0.18cm] (l4)  {$_{_5}$};

\node [below left of=1', node distance=0.8cm] (1'')  {$\circ$};
\node [left  of=1'', node distance=0.15cm] (l3)  {$_{_1}$};
\node [below  of=2', node distance=0.6cm] (2'')  {$\circ$};
\node [right  of=2'', node distance=0.18cm] (l2)  {$_{_6}$};

\draw [shorten <=-2pt, shorten >=-2pt] (1) -- (1');
\draw [shorten <=-2pt, shorten >=-2pt] (2) -- (2');

\draw [shorten <=-2pt, shorten >=-2pt] (1') -- (1'');
\draw [shorten <=-2pt, shorten >=-2pt] (2') -- (2'');
\draw [shorten <=-2pt, shorten >=-2pt] (1') -- (2');
\draw [shorten <=-2pt, shorten >=-2pt] (1) -- (2);

\end{tikzpicture}
\end{center}

the subposet $\{2,4,5,6\}$ is not a peak-subposet of $\P$, whereas $\{1,6,3\}$ is a peak-subposet of $\P$.
\end{example}

For a point  $a\in\P$, the subposets  of $\P$ 
$$a^{\triangledown}=\left\{{x\in\mathscr{P}\mid a\preceq x}\right\},
\hspace{0.2cm}a_{\vartriangle}=\left\{{x\in\mathscr{P}\mid x\preceq a}\right\}$$
are called  \textit{up-cone and  down-cone} respectively. In the literature, the up-cone of $a$ is also called the \textit{principal filter} of $a$ and its down-cone is its \textit{principal ideal}. A poset $\P$ is called a \textit{chain} (or a \textit{totally
ordered set} or a \textit{linearly ordered set}) if and only if  for
all $x,y\in \P$ we have $x\preceq y$ or $y\preceq x$.  On the other hand, an ordered set $\P$ is called an \textit{antichain} if and only if for all $x,y\in\P$ we have $x\preceq y$ in $\P$ only if $x=y$.  The cardinality of a maximal  antichain in a poset $\mathscr{P}$ is called the \textit{width} $w(\P)$ of $\P$. If some subsets $X_1,\dots, X_n$ of  $\P$ do not intersect mutually (but may have comparable points), then their union $X_1\cup\dots\cup X_n$  is called a \textit{sum} and is denoted by $X_1+\dots+X_n$. We recall that according to the Dilworth's theorem if the width $w(\P)=n$ then $\P$ is a sum of $n$ chains. For details on posets, we refer to  \cite{Priestley,stanley}.\\

Given a finite poset $\P$,  by $k\P$ we mean the \textit{incidence algebra} of the poset $\P$. $k\P$ can be described as a bound quiver algebra $kQ/I$ induced by the \textit{Hasse quiver} $Q$ of $\P$ whose vertices are the points of $\mathscr{P}$ and there is an arrow $\alpha:x\rightarrow y$ for each pair $x,y\in\P$ such that $y$ covers $x$.  The ideal $I$ is generated by all the commutativity relations $\gamma-\gamma'$  with  $\gamma$ and $\gamma'$ parallel paths in $Q$. In this case, the category $\md(k\P)$ of the finitely generated $k\P$-modules is identified with the well known category $\rep (Q,I)$ of representations  of the bound quiver $(Q,I)$.

\begin{example}\label{Hassequiver} The incidence algebra $k\P$ of the poset $\P$ defined in Example \ref{onepeak} is the bound quiver algebra $kQ\slash I$, where  $Q$ is the  is the quiver 
\[\xymatrix@=1em{&&&&&6\ar[ld]^\omega\\
1 \ar[rd]_\gamma &&&& 5\ar[llld]_\alpha\ar[ld]^\delta\\
& 2 \ar[rd]_\beta & & 4\ar[ld]^\xi\\
&&3}\] and the ideal $I$ is generated by the relation $\alpha\beta-\delta\xi$. 
\end{example}

Recall that the \textit{socle} $\text{soc }M$ of a module $M$ is the semisimple submodule generated by  all simple submodules of $M$. A module $M$ is called \textit{socle-projective} if $\text{soc }M$ is a projective module. We denote by $\md_{sp}(k\P)$ the full subcategory of $\md (k\P)$  whose objects are the socle-projective $k\P$-modules. We have an explicit description of the objects in $\md_{sp}(k\P)$ as follows.

\begin{proposition}\cite[Section 3]{simson5}\label{sp module} Each $k\P$-module $M$ in $\md(k\mathscr{P})$ is identified with  a collection $M=(M_x,{_y}h_x)_{x,y\in\P}$  of finite-dimensional $k$-vector spaces $M_x$, one for each point $x\in\P$, and a collection of $k$-linear maps ${_y}h_x: M_x \to M_y$, one for each relation $x\preceq y$ in $\P$, such that

\begin{itemize}
\item[(a)]${_x}h_x$ is the identity of $M_x$  for all $x\in\P$ and ${_w}h_y\cdot {_y}h_x= {_w}h_x$ for all $x\preceq y\preceq w$ in $\P$.
\end{itemize}
Furthermore, $M=(M_x,{_y}h_x)_{x,y\in\P}$ is a socle-projective module if  it also holds that  
\begin{itemize}
\item[(b)] $\stackbin[z\in\max\P]{}{\bigcap}\ker {_z}h_x=0$ for all $x\in\P^{-}$ such that $x\prec z$.
\end{itemize}
\end{proposition}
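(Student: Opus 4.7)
The plan is to dispatch part~(a) via the standard dictionary between representations of a bound quiver and functors from the underlying small category, and then prove part~(b) by first computing the socle and then showing that its projectivity is controlled by the maximal elements.

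For~(a), any path $p$ from $x$ to $y$ in the Hasse quiver $Q$ yields a $k$-linear map $M_x \to M_y$ by composition of the $f_\alpha$'s, and since the generators of $I$ equate parallel paths, this composite depends only on the endpoints. I would set ${_y}h_x$ to be this common value; the identity and composition axioms in~(a) then correspond exactly to the empty path and concatenation of paths in $Q$. Conversely, restriction of ${_y}h_x$ to cover arrows recovers the original representation. This dispatches~(a) in a paragraph.

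For~(b), I would first observe that a subrepresentation is semisimple precisely when all its structure maps vanish, so the maximal semisimple subrepresentation of $M$ is
\[
(\text{soc}\,M)_x \;=\; \bigcap_{y \succ x} \ker {_y}h_x
\]
for $x\in\P^{-}$, while $(\text{soc}\,M)_z = M_z$ for $z \in \max\P$; here one uses the composition identity from~(a) to reduce the intersection from arbitrary $y\succ x$ to the covers of $x$. Next, in the Hasse quiver the sinks are exactly the maximal elements of $\P$, so the simple module $S_x$ is projective if and only if $x \in \max\P$. Since $\text{soc}\,M$ is semisimple, it is projective if and only if $(\text{soc}\,M)_x = 0$ for every $x \in \P^{-}$; call this condition~$(\ast)$.

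The remaining and more delicate step is to identify $(\ast)$ with condition~(b). The inclusion $\bigcap_{y\succ x}\ker {_y}h_x \subseteq \bigcap_{z\in\max\P,\,z\succ x}\ker {_z}h_x$ is trivial, so (b) immediately implies $(\ast)$. For the converse I would argue by contradiction: given a nonzero $m \in M_x$ in the right-hand intersection, consider the cyclic subrepresentation $N \subseteq M$ generated by~$m$. By hypothesis ${_z}h_x(m) = 0$ for every maximal $z \succ x$, and $N_z = 0$ whenever $z$ is not comparable with $x$, so $N_z$ vanishes for every $z \in \max\P$. Hence $\text{soc}\,N$ would be a nonzero semisimple subrepresentation of $M$ supported entirely outside $\max\P$, contradicting $(\ast)$. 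The main obstacle is precisely this last step---collapsing the intersection over all comparable $y$ down to the maximal $z$ by producing a genuine simple summand of the socle from a putative violator of~(b); the rest is routine bookkeeping with the bound-quiver/poset dictionary.
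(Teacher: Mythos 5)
The paper does not prove this proposition; it is cited from Simson, so there is no in-text argument to compare against. Your proof is correct and self-contained. Part~(a) is the usual identification of $\md(k\P)=\md(kQ/I)$ with functors $\P\to\mathrm{Vect}_k$, using that $I$ is generated by commutativity relations. For~(b), your socle formula $(\operatorname{soc}M)_x=\bigcap_{y\succ x}\ker{_y}h_x$ and the fact that $S_x$ is projective precisely when $x\in\max\P$ (a sink of the Hasse quiver) together reduce socle-projectivity to the vanishing condition $(\ast)$. The implication $(b)\Rightarrow(\ast)$ is immediate since the intersection in~(b) is over fewer indices, and your converse is clean: from a nonzero $m\in M_x$ violating~(b), the cyclic submodule $N=kQ/I\cdot m$ vanishes at every maximal vertex, yet $N\neq 0$ forces $\operatorname{soc}N\neq 0$, which then sits inside $\operatorname{soc}M$ supported away from $\max\P$ and contradicts $(\ast)$. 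An equivalent, slightly more elementary alternative to the cyclic-submodule step is to take $w$ maximal among the $y\succeq x$ with ${_y}h_x(m)\neq 0$; then $w\notin\max\P$ and ${_{w}}h_x(m)$ lies in $(\operatorname{soc}M)_w$, giving the same contradiction. Both routes are fine.
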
 
Note that it is enough to define the linear maps ${_y}h_x$ when $y$ covers $x$, that is,  one for each arrow in the Hasse quiver of $\P$ because if $x\prec y$ but $y$ does not cover $x$ then for any chain $x=x_0\prec x_1\prec\dots \prec x_{l}=y $ in $\P$ such that $x_{i+1}$ covers $x_i$ we have that ${_y}h_x={_y}h_{x_{l-1}}\cdots {_{x_{1}}}h_x$. The condition (a) implies that it is well defined. 

\begin{example}\label{spmodule} Let $\P$ be the one-peak poset given in Example \ref{onepeak} whose Hasse quiver is shown in Example \ref{Hassequiver}.   The $k\P$-module $M$ given by the system
\[\xymatrix@=1.3em{&&&&& k \ar[ld]^{\left(\begin{smallmatrix}1\\0\end{smallmatrix}\right)}\\
k \ar[rd]_{\left(\begin{smallmatrix}1\\0\end{smallmatrix}\right)} &&&& k^2\ar[llld]_1\ar[ld]^1\\
& k^2 \ar[rd]_1 & & k^2 \ar[ld]^1\\
&&k^2}\]
is socle-projective. Indeed, since $\P$ has a unique maximal point $z=3$ we have that   $\stackbin[z\in\max\P]{}{\bigcap}\ker {_z}h_x=\ker {_3}h_x$ for all $x\in \P^-$. 	Note that, the kernel $\ker {_3}h_x$ of the map  ${_3}h_x$ from $M_x$ to $M_3$ is zero for each $x=1,2,4,5,6$.\\

For example, replacing $\left(\begin{smallmatrix}1\\0\end{smallmatrix}\right)$ on arrow $\gamma$  by $\left(\begin{smallmatrix}0\\0\end{smallmatrix}\right)$ would resulting a representation that is not socle-projective.
\end{example}

Let $M=(M_x,{_y}h_x)_{x,y\in\P}$ and $N=(N_x,{_y}h'_x)_{x,y\in\P}$   be two objects in $\md(k\P)$. A \textit{morphism}  of $k\P$-modules $f:M\to N$ is a collection $f=(f_x)_{x\in\P}$ of linear maps
$$f_x:M_x\to N_x$$ 
such that for each relation $x\preceq y$ in $\P$ the diagram 

\begin{center}
\begin{tikzcd}[row sep=large, column sep = large]
M_x \arrow[r, "{_y}h_x"] \arrow[d,"f_x" ]
& M_y \arrow[d, "f_y" ] \\
N_x \arrow[r, "{_y}h'_x" ]
& N_y
\end{tikzcd}
\end{center}
commutes, that is, $$f_y\circ{_y}h_x={_y}h'_x\circ f_x.$$

On the other hand,  from the point of view of the representations of posets introduced by Nazarova and Roiter \cite{nazarova}, the category $\md_{sp}(k\mathscr{P})$ is identified with the category $\P$-spr of \textit{peak $\P$-spaces} (or \textit{socle-projective representations of $\P$}) over the field $k$ defined by Simson \cite{simson3}. The objects of $\P$-spr are systems $$M=(M_x)_{x\in\P}$$ of finite dimensional $k$-vector spaces $M_x$ satisfying the following conditions:

\begin{enumerate}
\item[(a)] For each $x\in\P$, $M_x$ is a $k$-subspace of the space $$M^{\bullet}=\bigoplus_{z\in\max\P}M_{z}.$$

 \item[(b)] The inclusion $M_{z}\hookrightarrow M^{\bullet}$ is defined as usual for each $z\in\max\P$.

\item[(c)] For each  $x\prec y$ in $\P$ it holds that $\pi_{y}(M_{x})\subseteq M_{y}$, where $\pi_{y}\in\mathrm{End}\hspace{0.1cm}M^{\bullet}$ is the composition of the direct summand projection of $M^{\bullet}$ on  $$M^{\bullet}_y=\bigoplus_{y\preceq z\in\max\P}M_z$$ with the natural embedding homomorphism $M^{\bullet}_y\hookrightarrow M^{\bullet}$. 
\item[(d)] If $z\in\max\P$ and  $x\notin z_{\vartriangle}$ then $\pi_z(M_x)=0$.
\end{enumerate}

A \textit{morphism} $f:M\to L$ between two peak $\P$-spaces $M$ and $L$ is a collection of $k$-linear maps $f=(f_z:M_z\to L_z)_{z\in\max\P}$ such that for all $x\in\P$
$$\Big(\bigoplus_{z\in\max\P}f_z\Big)(M_x)\subseteq L_x .$$ 

The category of peak $\P$-spaces over $k$ is denoted by  $\repr$.  Following \cites{simson3,simson4, pena}, $\repr$  is an additive Krull–Schmidt category of finite global dimension which  is closed under taking kernels and  extensions. Furthermore,  it  has  enough projective objects, AR-sequences, source maps, and sink maps.\\

The \textit{coordinate vector} of a peak $\P$-space $M$ is the vector $$d = \dm M = (d_x)_{x\in\P}\in\mathbb{Z}^{\P}$$ such that $d_x= \dim_{k} M_x$ if $x\in\max\P$ and  $d_x=\dim_{k}(M_x\slash \underline{M_x})$ otherwise, where $\underline{M_x} =\sum_{y\prec x}\pi_x(M_y)$. This vector allows us to define the \textit{coordinate support} $\csupp M$ of  $M$ given by the peak-subposet $\csupp M =\lbrace x\in\P\hspace{0.1cm}\vert\hspace{0.1cm} (\dm M)_x\neq 0\rbrace$ of $\P$. 

\begin{example}\label{morphism} Let $\P$ be the one-peak poset given in Example \ref{onepeak}. The system $M=(M_1,M_2,M_3,M_4,M_5,M_6)=(k\oplus 0,k^2,k^2,k^2,k^2,k\oplus 0)$ is a peak $\P$-space, whereas the system $(0,0,k,k,k,k)$ does not satisfy condition (c) because $ M_5$  is not a $k$-subspace of $M_2$. In this case, the coordinate vector is  $\dm M=(1,1,2,0,1,1)$ and the coordinate support is the peak-subposet  $\csupp M=\{1,2,3,5,6\}$  of $\P$. For greater clarity, note that $\underline{M_2}=\pi_2(M_1)=M_1=k\oplus 0$, where $\pi_2$ is the identity of $k^{2}$, that is,  $\dim_{k}(M_2\slash  \underline{M_2})=1$. On the other hand, given the peak $\P$-space $ L=~(k,k,k,k,k,k)$,  the map $
f:k^2\xrightarrow{\left(\begin{smallmatrix}1&1\end{smallmatrix}\right)}k$ is a morphism from $M$ to $L$.
\end{example}

A peak $\P$-space $M$  is said to be \textit{sincere} if it is indecomposable and $\csupp M=\P$. Furthermore, if there exists a sincere peak $\P$-space we say that $\P$ is a \textit{sincere poset}. \\

The category $\repr$ (or the poset $\P$) is said to be of \textit{ finite representation type} if it has  only a finite number of nonisomorphic indecomposable peak $\P$-spaces, otherwise, it is of  \textit{infinite representation type}. The classification of all sincere  $r$-peak posets of representation finite type  was given by M. Kleiner \cite{kleiner75}, for the case $r=1$, and by J. Kosakowska \cite{justina,justina1,justina2}, in the remaining cases. Moreover, they gave the list of the sincere peak $\P$-spaces, where $\P$ is a sincere poset of finite representation type. Such lists are important because we can get all indecomposable peak $\P$-spaces of a  given poset $\P$  lifting all sincere $\mathcal{S}$-spaces of all sincere peak-subposets $\mathcal{S}$ of $\P$ via the well-known \textit{subposet induced functor} \cite{dowbor} 
\begin{equation}\label{inducedfunctor}
{\mathcal{S}}:\mathcal{S}\mbox{-spr}\to\repr
\end{equation}
that assigns to the peak $\mathcal{S}$-space $(M_x)_{x\in\mathcal{S}}$  the peak $\P$-space $(\hat{M}_x)_{x\in\P}$,  where $\hat{M}_x$ is defined by    
\[\hat{M}_{x}=
\begin{cases}
M_x,  & \text{if }x\in \max\mathcal{S}, \\
0, & \text{if }x\notin (\max\mathcal{S})_{\vartriangle},\\
\sum_{y\preceq x} \pi_x(M_y),                                             &\text{if }x\in (\max\mathcal{S})_{\vartriangle}.
\end{cases}\]

\begin{proposition}\cite[Proposition 5.14]{simson} \label{ind} Up to isomorphism, any indecomposable object $M$ in $\repr$ is   the image $T_{\mathcal{S}}(L)$ of a sincere peak $\mathcal{S}$-space $L\in\mathcal{S}$-spr, where $\mathcal{S}$ is a sincere peak-subposet of $\P$. As a consequence, 
$\csupp M=\mathcal{S}$.  
\end{proposition}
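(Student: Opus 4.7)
The plan is to take $\mathcal{S}:=\csupp M$ as the candidate peak-subposet and $L:=(M_x)_{x\in\mathcal{S}}$, viewed as a system of subspaces of $L^{\bullet}:=\bigoplus_{z\in\max\mathcal{S}} M_z$, as the candidate sincere $\mathcal{S}$-space. The verification breaks into four steps: (i) $\mathcal{S}$ is a peak-subposet of $\P$; (ii) $L$ is a well-defined peak $\mathcal{S}$-space; (iii) $T_{\mathcal{S}}(L)\cong M$; (iv) $L$ is sincere, so $\mathcal{S}$ is a sincere peak-subposet and $\csupp M=\mathcal{S}$.

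For (i), I would show that every $x\in\mathcal{S}\cap\P^-$ fails to be maximal in $\mathcal{S}$. Indeed, $(\dm M)_x\neq 0$ forces $M_x\neq 0$; by condition (c) of Proposition~\ref{sp module}, $M_x\subseteq \bigoplus_{x\preceq z\in\max\P}M_z$, so there is $z\in\max\P$ with $x\prec z$ and $M_z\neq 0$, i.e.\ $z\in\mathcal{S}$. Hence $\max\mathcal{S}\subseteq\max\P$, and contrapositively $M_z=0$ for every $z\in\max\P\setminus\max\mathcal{S}$; in particular $M^{\bullet}_x$ coincides with $L^{\bullet}_x$ for all $x$, which makes (ii) immediate, since axioms (a)--(d) for $L$ then follow from those for $M$.

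The crux is (iii). For $x\in\mathcal{S}$ the equality $\hat{L}_x=M_x$ is essentially tautological: for $x\in\max\mathcal{S}$ it holds by definition, and for $x\in\mathcal{S}\cap\P^-$ one has $M_x=\pi_x(M_x)\subseteq \sum_{y\preceq x,\,y\in\mathcal{S}}\pi_x(M_y)=\hat{L}_x\subseteq M_x$, using $\pi_x(M_y)\subseteq M_x$ (condition (c)). The real work is at $x\notin\mathcal{S}$. If $x\notin(\max\mathcal{S})_{\vartriangle}$, then no $z\in\max\mathcal{S}$ lies above $x$; combined with the vanishing established in step (i), this gives $M^{\bullet}_x=0$, hence $M_x=0=\hat{L}_x$. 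Otherwise $x\in(\max\mathcal{S})_{\vartriangle}\setminus\mathcal{S}\subseteq\P^-$, so $(\dm M)_x=0$ and $M_x=\sum_{y\prec x}\pi_x(M_y)$. This is the main obstacle, and I would handle it by a noetherian induction, peeling off each summand with $y\notin\mathcal{S}$: either $y\in\max\P$ (so $M_y=0$) or $y\in\P^-$ with $(\dm M)_y=0$, in which case $M_y=\sum_{z\prec y}\pi_y(M_z)$; applying the transitivity $\pi_x\pi_y=\pi_x$ valid for $y\prec x$ pushes the sum one level deeper. Since $\P$ is finite, this terminates with $M_x=\sum_{y\prec x,\,y\in\mathcal{S}}\pi_x(M_y)=\hat{L}_x$.

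For (iv), additivity of $T_{\mathcal{S}}$ together with the observation that any nonzero $\mathcal{S}$-space has a nonzero component at some $z\in\max\mathcal{S}$ (by axiom (c)) ensures that a nontrivial decomposition of $L$ would yield a nontrivial decomposition of $M\cong T_{\mathcal{S}}(L)$; indecomposability of $M$ therefore forces $L$ indecomposable. Finally, the identity $\csupp L=\mathcal{S}$ follows from $(\dm L)_x=(\dm M)_x$ for $x\in\mathcal{S}$, which is obtained by the same unfolding as in step (iii) applied now to the two definitions of $\underline{M_x}$ and $\underline{L_x}$. This makes $L$ sincere, $\mathcal{S}$ a sincere peak-subposet of $\P$, and yields the consequence $\csupp M=\csupp T_{\mathcal{S}}(L)=\mathcal{S}$.
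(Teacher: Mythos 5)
The paper does not prove this proposition; it is quoted verbatim from Simson's book \cite[Proposition 5.14]{simson}, so there is no in-paper argument to compare your attempt against. Judged on its own, your proof is essentially correct, and the strategy — take $\mathcal{S}:=\csupp M$, restrict $M$ to its coordinate support, and verify that $T_{\mathcal{S}}$ recovers $M$ — is the natural one and is in the spirit of what Simson's argument must do.

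Two small points. First, in step (i) you invoke ``condition (c) of Proposition~\ref{sp module}'' to get $M_x\subseteq\bigoplus_{x\preceq z\in\max\P}M_z$; Proposition~\ref{sp module} only has parts (a) and (b), and they concern $k\P$-modules rather than $\P$-spaces. The fact you actually need is axiom (d) in the definition of a peak $\P$-space, namely $\pi_z(M_x)=0$ whenever $z\in\max\P$ and $x\notin z_{\vartriangle}$, which is exactly what forces $M_x$ into $M^{\bullet}_x$. Second, in step (iv), after establishing $\underline{L_x}=\underline{M_x}$ via the same peeling you already need the base case of the noetherian induction spelled out: if $y$ is minimal in $\P$ and $y\notin\mathcal{S}$ then $\underline{M_y}=0$, so $(\dm M)_y=\dim M_y=0$, hence $M_y=0$; and similarly any $y\in\max\P\setminus\mathcal{S}$ has $M_y=0$. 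You clearly have this in mind, but it is what guarantees the descent terminates with all surviving summands indexed by $\mathcal{S}$, so it is worth stating. The identity $\pi_x\circ\pi_y=\pi_x$ for $y\prec x$ (because $M^\bullet_x\subseteq M^\bullet_y$) that drives the peeling is correct and is the key technical observation. With those clarifications the argument is complete.
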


For a finite poset $\P$, two functors play a important role in the proof of the equivalence of the categories $\repr$ and $\md_{sp}(k\P)$; one of them is the called  \textit{embedding functor} \begin{equation}\label{embedding}
\bm\rho: \repr\to\md(k\P)
\end{equation}
defined by $\bm\rho({U})=(U_y,{_y}\pi_x)_{x,y\in\P}$, where ${_y}\pi_x:U_x\to U_y$ is the unique $k$-linear map making the diagram
\begin{center}
\begin{tikzcd}[row sep=large, column sep = large]
U_x \arrow[r, hook] \arrow[d,"{_y}\pi_x" ]
& U^{\bullet} \arrow[d, "\pi_y" ] \\
U_y \arrow[r, hook]
& U^{\bullet}
\end{tikzcd}
\end{center}
commutative. The other functor is the called  \textit{adjustment functor} \cites{simson, simson3} \begin{equation}\label{adjustment}
\bm\theta:\md(\Bbbk\P)\to\repr
\end{equation} given by $\bm\theta(M_x,{_y}h_x)_{x\text{, }y\in\P}=(U_x)_{x\in\P}$, such that

 $$U_x= \left\{ \begin{array}{ll}
M_x &   \mbox{if } x\in\max\P,
\\ \mbox{Im}(f:M_x\to \bigoplus_{z\in\max\P}M_z)  &  \mbox{otherwise.}
\end{array}
\right.
$$
 where $f=({_z}f_x)_{z\in\max\P}$ and $${_z}f_x= \left\{ \begin{array}{ll}
{_z}h_x &   \mbox{if } x\prec z \mbox{ in }\P ,
\\ 0  &  \mbox{otherwise.}
\end{array}
\right.
$$
\begin{example} The peak $\P$-space $M$ given in Example  \ref{morphism} is the image  by $\bm \theta$  of the socle-projective $k\P$-module  defined in Example \ref{spmodule}. Moreover, the functor $\bm\rho$ sends the  peak $\P$-space $M$  to the  socle-projective representation mentioned. 
\end{example}

Then, we have the following equivalence of categories given by Simson.

\begin{lemma}\cite[Lemma 2.1]{simson3}\label{prin}
The functor $\bm\rho$ induces an equivalence of categories $\bm\rho: \repr\to\md_{sp}(k\mathscr{P})$ and the inverse of $\bm\rho$ is the restriction of $\bm\theta$ to $\md_{sp}(k\mathscr{P})$.\end{lemma}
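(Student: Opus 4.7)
The plan is to verify, in order, that (i) $\bm\rho$ sends $\repr$ into $\md_{sp}(k\P)$, (ii) $\bm\theta|_{\md_{sp}(k\P)}$ lands in $\repr$, (iii) there is a natural isomorphism $\bm\rho\circ\bm\theta\cong\mathrm{id}_{\md_{sp}(k\P)}$, and (iv) $\bm\theta\circ\bm\rho=\mathrm{id}_{\repr}$ on the nose. Functoriality and naturality will then follow by inspection, since every map in sight is built either from the summand projections $\pi_y\in\mathrm{End}(U^\bullet)$ or from the structural components ${_y}h_x$.

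For (i), I would fix $U\in\repr$ and $x\in\P^-$, and suppose $v\in U_x$ satisfies ${_z}\pi_x(v)=0$ for every $z\in\max\P$ with $x\prec z$. Since ${_z}\pi_x$ is by construction the restriction of the summand projection $\pi_z$ to $U_x$, this gives $\pi_z(v)=0$ for each such $z$; for the remaining maximal $z$, necessarily incomparable with $x$, axiom (d) of peak $\P$-spaces gives $\pi_z(U_x)=0$, so $\pi_z(v)=0$ as well. Hence $v=\sum_{z\in\max\P}\pi_z(v)=0$, verifying condition (b) of Proposition~\ref{sp module}. For (ii), the peak-$\P$-space axioms (a), (b), (d) for $\bm\theta(M)$ are immediate from the definition of the $f_x$; axiom (c) reduces to the single identity
\[
\pi_y\circ f_x \;=\; f_y\circ {_y}h_x \qquad (x\prec y,\ y \text{ non-maximal}),
\]
which follows from the chain rule ${_z}h_x={_z}h_y\circ{_y}h_x$ for $y\prec z\in\max\P$, the case $y\in\max\P$ being immediate because then $\pi_y(f_x(v))={_y}h_x(v)\in M_y=U_y$.

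For (iii), I would define $\phi_x:M_x\to U_x$ by $\phi_x=\mathrm{id}_{M_x}$ for $x\in\max\P$ and $\phi_x=f_x$ otherwise. Socle-projectivity is precisely the statement $\ker f_x=\bigcap_{z\succ x,\,z\in\max\P}\ker {_z}h_x=0$, so each $\phi_x$ is an isomorphism, and the compatibility $\phi_y\circ{_y}h_x=({_y}\pi_x)^{\bm\rho\bm\theta(M)}\circ\phi_x$ is nothing but the identity from (ii). For (iv), for any non-maximal $x$ the map $f_x':U_x\to U^\bullet$ attached to $\bm\rho(U)$ has $z$-component ${_z}\pi_x$ for $x\prec z\in\max\P$ and $0$ otherwise; since ${_z}\pi_x(v)$ agrees with $\pi_z(v)\in U_z$ in the former case and axiom (d) forces $\pi_z(v)=0$ in the latter, we get $f_x'(v)=\sum_{z\in\max\P}\pi_z(v)=v$, so $f_x'$ is the inclusion $U_x\hookrightarrow U^\bullet$ and $\bm\theta\bm\rho(U)=U$ as subspaces of $U^\bullet$.

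The main obstacle is pure bookkeeping: one has to recognize that the peak axiom (d) --- which kills components along incomparable maximal peaks --- is precisely what identifies $f_x'$ as the inclusion in (iv), while the socle-projectivity condition (b) of Proposition~\ref{sp module} --- which forces the kernels of the structure maps to the maximal peaks above $x$ to intersect in zero --- is precisely what makes $f_x$ injective in (iii). Once the identity $\pi_y\circ f_x=f_y\circ{_y}h_x$ is established, every remaining compatibility reduces to it together with the universal property defining ${_y}\pi_x$, and both compositions become (naturally) the identity.
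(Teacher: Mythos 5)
The paper does not prove this lemma --- it is cited verbatim from Simson's \cite[Lemma 2.1]{simson3} and used as a black box --- so there is no internal proof to compare against. Your reconstruction is correct. In particular, you correctly isolate the two mechanisms at work: peak axiom (d), which together with $\sum_{z\in\max\P}\pi_z=\mathrm{id}_{U^\bullet}$ makes $\bm\rho(U)$ socle-projective in step (i) and identifies $f'_x$ with the inclusion $U_x\hookrightarrow U^\bullet$ in step (iv); and condition (b) of Proposition~\ref{sp module}, which is literally the statement that each $f_x$ is injective, making your $\phi_x$ isomorphisms in step (iii). The identity $\pi_y\circ f_x=f_y\circ{_y}h_x$ for $x\prec y$ with $y$ non-maximal (with its separate maximal-$y$ case $\pi_y\circ f_x={_y}h_x$) is indeed the single non-formal computation underlying both step (ii) and the compatibility of the $\phi_x$, and it reduces correctly to the transitivity relation ${_z}h_x={_z}h_y\circ{_y}h_x$ from Proposition~\ref{sp module}(a). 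Functoriality of $\bm\rho,\bm\theta$ on morphisms and naturality of $\phi$ follow by inspection from the same explicit formulas, as you indicate, so the argument is complete.
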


Recall that a module $M$ in $\md (k\P)$ is called \textit{prinjective} (not to be confused with preinjective) if it has a  projective resolution of the form $$0\to P'\to P\to M\to 0$$ in $\md(k\mathscr{P})$, where $P$ is  projective  and $P'$ is semisimple projective.  The full subcategory $\prin(k\P)$ of $\md(k\P)$ generated by the prinjective modules is closely related to the category $\repr$ because the functor $\bm\theta$ induces a full dense additive  functor
$$\bm\theta_{\P}:\prin(k\P)\to \repr$$ such that $\ker\bm\theta_{\P}$ consist of all maps in $\prin(k\P)$ having a factorization through a direct sum of copies of the projective $k\P^{-}$-modules. Furthermore, the functor $\theta_{\P}$ preserves and reflects finite representation type, and induces an equivalence of categories $$\prin(k\P)\slash\ker\bm\theta_{\P} \cong \repr.$$

Thus, we can formulate the following criterion of finite representation type due Simson.
\begin{theorem}\cite[Theorem 3.1]{simson3}\label{tipof} The following conditions are equivalent
\begin{itemize}
\item[(a)] The category $\repr$ is of finite representation type.
\item[(b)] The category $\prin(k\P)$ is of finite representation type.
\item[(c)] The poset $\P$ does not contain as a 
peak-subposet  any of the posets $\P_1,\dots,\P_{110}$  presented in \cite[Section 5]{simson3}.
\end{itemize}
\end{theorem}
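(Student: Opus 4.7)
The plan is to prove the three-way equivalence by establishing (a)$\Leftrightarrow$(b) via the adjustment functor $\bm\theta_{\P}$ and (a)$\Leftrightarrow$(c) by combining the subposet-induced functor with the classification of sincere posets. The implication (a)$\Leftrightarrow$(b) is essentially formal given the machinery already introduced, while (c)$\Rightarrow$(a) is the main obstacle: it is the actual classification step and rests on the list of $110$ critical posets.

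For (a)$\Leftrightarrow$(b), I would invoke the functor $\bm\theta_{\P}:\prin(k\P)\to\repr$ described immediately above the theorem. It is full, dense and additive, its kernel consists of maps factoring through direct sums of projective $k\P^{-}$-modules, and it induces an equivalence $\prin(k\P)/\ker\bm\theta_{\P}\cong\repr$. Since $\ker\bm\theta_{\P}$ is an ideal whose objects are built from finitely many projective $k\P^{-}$-modules, a quotient-by-ideal argument shows that $\prin(k\P)$ has only finitely many indecomposables up to isomorphism if and only if $\repr$ does. This yields (a)$\Leftrightarrow$(b).

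For (a)$\Rightarrow$(c) I would proceed by contrapositive. Suppose $\P$ contains some $\P_i$ (from the list in \cite[Section 5]{simson3}) as a peak-subposet. The classification in \cite{simson3} furnishes infinitely many pairwise non-isomorphic sincere peak $\P_i$-spaces $\{L^{(\lambda)}\}_{\lambda\in\Lambda}$. Applying the subposet induced functor $T_{\P_i}:\P_i\mbox{-spr}\to\repr$ from \eqref{inducedfunctor}, each $T_{\P_i}(L^{(\lambda)})$ is indecomposable in $\repr$ with coordinate support equal to $\P_i$ by Proposition~\ref{ind}. Since the $L^{(\lambda)}$ are pairwise non-isomorphic sincere objects over the same peak-subposet, the $T_{\P_i}(L^{(\lambda)})$ are also pairwise non-isomorphic, contradicting (a).

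The hardest direction is (c)$\Rightarrow$(a). Here I would use Proposition~\ref{ind} to reduce the question to sincere peak-subposets: every indecomposable in $\repr$ is $T_{\mathcal{S}}(L)$ for some sincere peak-subposet $\mathcal{S}\subseteq\P$ and some sincere peak $\mathcal{S}$-space $L$, so $\repr$ has finitely many indecomposables iff for each sincere peak-subposet $\mathcal{S}$ of $\P$ there are only finitely many sincere $\mathcal{S}$-spaces. The contrapositive of (a)$\Rightarrow$(c) is the statement that the $\P_1,\dots,\P_{110}$ are precisely the minimal sincere peak-subposets of infinite representation type; this is Simson's classification, which combines Kleiner's list for $1$-peak posets \cite{kleiner75} with Kosakowska's extension to $r$-peak posets \cite{justina,justina1,justina2} and is obtained by a case analysis controlled by differentiation algorithms of Zavadskij type \cite{Zavadskijp}. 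Rather than reproducing this entire classification, I would cite it and content myself with explaining how the reduction to sincere peak-subposets turns (c) into exactly the hypothesis required to apply that list.
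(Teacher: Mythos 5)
The paper does not prove this theorem at all: it is imported verbatim as \cite[Theorem~3.1]{simson3}, and the paragraph immediately preceding it already records the one piece of machinery your (a)$\Leftrightarrow$(b) step uses, namely that $\bm\theta_{\P}$ induces an equivalence $\prin(k\P)/\ker\bm\theta_{\P}\cong\repr$ and ``preserves and reflects finite representation type.'' So there is no argument in the paper to compare against; the authors simply cite Simson, exactly as you ultimately do for the substantive direction.

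Your sketch of (a)$\Leftrightarrow$(b) is fine and matches the surrounding text. For (a)$\Rightarrow$(c) the idea is right, but it silently assumes each $\P_i$ admits \emph{infinitely many sincere} indecomposable peak $\P_i$-spaces, not merely infinitely many indecomposables. This is true for Simson's critical posets (they are minimal of infinite type, so the infinitude must already appear on the sincere level), but it is a fact you should state rather than inherit; Proposition~\ref{ind} alone does not give it. You also need that $T_{\P_i}$ separates isomorphism classes of sincere objects, which again follows from Proposition~\ref{ind} (the sincere peak-subposet is recovered as $\csupp$), but deserves a sentence. Finally, the line ``the contrapositive of (a)$\Rightarrow$(c) is the statement that the $\P_1,\dots,\P_{110}$ are precisely the minimal sincere peak-subposets of infinite representation type'' is not a contrapositive: the contrapositive of (a)$\Rightarrow$(c) is only ``$\P$ contains some $\P_i$ $\Rightarrow$ infinite type,'' i.e.\ the direction you already handled; the stronger minimality/completeness claim is what powers (c)$\Rightarrow$(a), and it is exactly Simson's classification. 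Since you (correctly) defer that to the literature, your proposal, like the paper, is ultimately a reduction to the cited theorem rather than an independent proof — which is the honest state of affairs here.
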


\section{Posets of type $\mathbb{A}$}\label{posetstypeA}
In this section, we introduce a family of posets which we call posets of type $\mathbb{A}$ because of a characterization using a type $\mathbb{A}$ quiver given in Proposition \ref{0}.

\begin{definition}\label{defposetypeA}
A finite connected poset $\P$ is said to be \textit{poset of type $\mathbb{A}$} if $\P$ does not contain as a peak-subposet any of the following posets:\vspace{0.2cm}
\begin{center}

\begin{tabular}{|l|l|l|l|}
\hline 
\begin{tikzpicture}
\node (top) at (-0.7,0.5) {$\mathcal{R}_1$};

\node (top) at (0,0) {$\star$};
\node [below of=top, node distance=0.7cm](center) {$\circ$};
\node [left  of=center, node distance=0.7cm] (left)  {$\circ$};
\node [right of=center, node distance=0.7cm] (right) {$\circ$};

    \draw [blue,  thick, shorten <=-2pt, shorten >=-2pt] (top) -- (left);
    \draw [blue, thick, shorten <=-2pt, shorten >=-2pt] (top) -- (right);
    \draw [blue, thick, shorten <=-2pt, shorten >=-2pt] (top) -- (center);\end{tikzpicture} &
    
    \begin{tikzpicture} 
    \node (top) at (0,0.5) {$\mathcal{R}_2$};
    \node (top1) at (0,0) {$\star$};
    \node [right of=top1, node distance=0.7cm](top2)  {$\star$};
    \node (m) at (0.35,-0.35) {$\circ$};
    \node (bellow) at (0.35,-0.7) {$\circ$};
     \draw [blue,  thick, shorten <=-2pt, shorten >=-2pt] (top1) -- (m);
    \draw [blue, thick, shorten <=-2pt, shorten >=-2pt] (top2) -- (m);
    \draw [blue, thick, shorten <=-2pt, shorten >=-2pt] (m) -- (bellow);
    
   \end{tikzpicture}&
   
    \begin{tikzpicture} 
\node (top) at (0,0.5) {$\mathcal{R}_3$};   
   \node (top3) at (0,0){$\star$};    
    \node [right of=top3, node distance=0.7cm](top4) {$\star$};
        \node [right of=top4, node distance=0.7cm] (top5) {$\star$};
        \node [below of=top4, node distance=0.7cm] (be) {$\circ$};
 \draw [blue, thick, shorten <=-2pt, shorten >=-2pt] (be) -- (top3);
 \draw [blue, thick, shorten <=-2pt, shorten >=-2pt] (be) -- (top4);
 \draw [blue, thick, shorten <=-2pt, shorten >=-2pt] (be) -- (top5);\end{tikzpicture} & 
 
 \begin{tikzpicture}
 \node (1) at (0,0) {$\star_1$};
\node (A) at (0.8,0.5) {$\mathcal{R}_{4,n}$, $n\geq 0$};
\node [right of=1, node distance=0.7cm](2) {$\star_2$};

\node [right of=2, node distance=0.7cm](3) {$\star_3$};
\node [right of=3, node distance=0.7cm] (4)  {$\cdots$};
\node [right of=4, node distance=0.7cm] (5) {$\star_{n+2}$};

\node [below of=1, node distance=0.7cm] (1')  {$\circ$};
\node [below  of=2, node distance=0.7cm] (2')  {$\circ$};
\node [below  of=3, node distance=0.7cm] (3')  {$\circ$};
\node [below  of=4, node distance=0.7cm] (4')  {$\dots$};
\node [below  of=5, node distance=0.7cm] (5')  {$\circ$};

   \draw [blue,  thick, shorten <=-2pt, shorten >=-2pt] (1) -- (1');
    \draw [blue, thick, shorten <=-2pt, shorten >=-2pt] (2') -- (2);
    
\draw [blue,  thick, shorten <=-2pt, shorten >=-2pt] (3) -- (3');
    \draw [blue, thick, shorten <=-2pt, shorten >=-2pt] (5) -- (5');    
    
    \draw [blue, thick, shorten <=-2pt, shorten >=-2pt] (1) -- (2');
    
 \draw [blue, thick, shorten <=-2pt, shorten >=-2pt] (2) -- (3');
     \draw [blue, thick, shorten <=-2pt, shorten >=-2pt] (1') -- (5);
\end{tikzpicture} \\ 
\hline 
\end{tabular} 

\end{center}
\vspace{0.2cm}
\end{definition}

\begin{example}\label{posetstypeA} The poset $\P$ given  in Example \ref{onepeak} is a poset of type $\mathbb{A}$ because although $\{2,4,5,6\}$ is a subposet  of type $\mathcal{R}_2$ it is not a peak-subposet of $\P$. On the other hand, the poset
\begin{center}
\begin{tikzpicture}
\node (1) at (0,0) {$\star$};
\node [left  of=1, node distance=0.15cm] (l2)  {$_{_2}$};
\node [right  of=1, node distance=0.6cm] (2)  {$\star$};
\node [right  of=2, node distance=0.18cm] (l2)  {$_{_5}$};

\node [below  of=1, node distance=0.6cm] (1')  {$\circ$};
\node [left  of=1', node distance=0.15cm] (l1)  {$_{_1}$};

\node [below  of=2, node distance=0.6cm] (2')  {$\circ$};
\node [right  of=2', node distance=0.18cm] (l4)  {$_{_4}$};

\node [ below  of=1', node distance=0.6cm] (1'')  {$\circ$};
\node [left  of=1'', node distance=0.15cm] (l3)  {$_{_3}$};
\node [below  of=2', node distance=0.6cm] (2'')  {$\circ$};
\node [right  of=2'', node distance=0.18cm] (l2)  {$_{_6}$};
\node [right  of=2', node distance=0.6cm] (3)  {$\star$};
\node [right  of=3, node distance=0.18cm] (l2)  {$_{_7}$};

\draw [shorten <=-2pt, shorten >=-2pt] (1) -- (1');
\draw [shorten <=-2pt, shorten >=-2pt] (2) -- (2');

\draw [shorten <=-2pt, shorten >=-2pt] (1') -- (1'');
\draw [shorten <=-2pt, shorten >=-2pt] (2') -- (2'');
\draw [shorten <=-2pt, shorten >=-2pt] (1'') -- (2');
\draw [shorten <=-2pt, shorten >=-2pt] (2'') -- (3);
\end{tikzpicture}
\end{center}
is  a three-peak  poset of type $\mathbb{A}$ which can be viewed as a Dynkin quiver of type $\mathbb{E}_7$.\end{example}

We say that two maximal points $z$ and $z'$ in a poset $\P$  are \textit{neighbors} if $z_{\vartriangle}\cap z'_{\vartriangle}\neq \emptyset$.  Then, we describe this notion when $\P$ is a poset of type $\mathbb{A}$ as follows. 

\begin{lemma}\label{neighbor}
Let $\P$ be  an $r$-peak  poset of type $\mathbb{A}$ with $r\geq 2$. The following statements hold:
\begin{itemize}
\item[(a)]The points $z,z'\in\max\P$ are neighbors if and only if $z_{\vartriangle}\cap z'_{\vartriangle}=\lbrace x\rbrace$, for some $x\in\min\P$.
\item[(b)]There exists a point $z\in\max\P$ such that $z$ has a unique neighbor.
\end{itemize}
\end{lemma}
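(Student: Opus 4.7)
The plan is to derive each claim by exhibiting, in any ``failure'' configuration, one of the forbidden peak-subposets $\mathcal{R}_1,\mathcal{R}_2,\mathcal{R}_3,\mathcal{R}_{4,n}$.

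For part (a) the backward direction is immediate, so I would focus on the forward direction. Assuming that $z_\vartriangle\cap z'_\vartriangle$ contains two distinct elements $x,y$, I would split into two cases according to their comparability. If $x\prec y$ (or $y\prec x$), the full subposet $\{z,z',y,x\}$ is a peak-subposet of $\P$ isomorphic to $\mathcal{R}_2$. If $x,y$ are incomparable, the full subposet $\{z,z',x,y\}$ is a peak-subposet isomorphic to $\mathcal{R}_{4,0}$. Either case contradicts the type $\mathbb{A}$ hypothesis, so the intersection is a singleton $\{x\}$. To conclude that $x$ is minimal, I would observe that any $w\prec x$ would also lie in $z_\vartriangle\cap z'_\vartriangle$, contradicting uniqueness.

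For part (b) the strategy is to work with the \emph{neighbor graph} $G=(\max\P,E)$ in which $\{z,z'\}$ is an edge precisely when $z$ and $z'$ are neighbors; the goal is to prove that $G$ is a forest and then extract a leaf. Suppose for contradiction that $G$ contains a cycle $z_1,\dots,z_k,z_1$ of minimal length $k\geq 3$. By part (a), there exist unique elements $m_i\in\min\P$ with $m_i\prec z_i$ and $m_i\prec z_{i+1}$ (indices mod $k$). The forbidden subposet $\mathcal{R}_3$ implies that no element of $\P$ can lie below three distinct maximal elements, so each $m_i$ is below exactly $\{z_i,z_{i+1}\}$ in $\max\P$. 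From this one deduces that the $m_i$ are pairwise distinct (the consecutive pairs $\{z_i,z_{i+1}\}$ are pairwise distinct for $k\geq 3$) and pairwise incomparable (a relation $m_i\prec m_j$ would push $m_i$ below the pair $\{z_j,z_{j+1}\}$, forcing this pair to coincide with $\{z_i,z_{i+1}\}$, which fails for $k\geq 3$). Hence the full subposet $\{z_1,\dots,z_k,m_1,\dots,m_k\}$ is a peak-subposet of $\P$ isomorphic to $\mathcal{R}_{4,k-2}$, the desired contradiction.

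Once $G$ is known to be a forest, I would split by the presence of edges: if $G$ has at least one edge, the tree component containing it has at least two vertices and hence a leaf, which is precisely a maximal point with a unique neighbor; if $G$ has no edges, then $\P=\bigsqcup_{z\in\max\P}z_\vartriangle$ would be a disjoint union of $r\geq 2$ nonempty pieces, contradicting connectedness of $\P$. The main obstacle will be the cyclic case of (b): one must verify that the induced full subposet on the $z_i$ and $m_i$ is \emph{exactly} $\mathcal{R}_{4,k-2}$ rather than merely containing it, which requires ruling out every extra comparability, and this is precisely where the $\mathcal{R}_3$-obstruction is indispensable.
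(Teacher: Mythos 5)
Your proof is correct in both parts, but part (b) takes a genuinely different route from the paper. For part (a) the two arguments are essentially the same up to reorganization: the paper first proves every element of $z_\vartriangle\cap z'_\vartriangle$ is minimal (via $\mathcal{R}_2$), then that there is only one (via $\mathcal{R}_{4,0}$); you prove uniqueness first by a case split on comparability (using $\mathcal{R}_2$ and $\mathcal{R}_{4,0}$) and then deduce minimality from uniqueness. Either order works.

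For part (b), the paper works locally at a single maximal point: it shows every $z\in\max\mathscr{P}$ has at most two neighbors (three neighbors give $\mathcal{R}_1$ when the three witnesses are distinct, or $\mathcal{R}_3$ when two coincide), then observes that if every maximal point had exactly two neighbors the poset would contain $\mathcal{R}_{4,n}$. You instead argue globally that the neighbor graph $G$ is acyclic, by exhibiting $\mathcal{R}_{4,k-2}$ inside any $k$-cycle, and then take a leaf of a nontrivial tree component. The two approaches exploit different obstructions: the paper needs $\mathcal{R}_1$ and $\mathcal{R}_3$ to cap the degree at $2$ and then $\mathcal{R}_{4,n}$ to finish; your argument never invokes $\mathcal{R}_1$ at all, relying only on $\mathcal{R}_3$ (to pin down which maxima each witness $m_i$ lies under) and $\mathcal{R}_{4,n}$. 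Your version is also more scrupulous than the paper's at the key step: the paper merely asserts that an all-degree-two neighbor graph forces $\mathcal{R}_{4,n}$ as a peak-subposet, whereas you explicitly verify that the induced order on $\{z_1,\dots,z_k,m_1,\dots,m_k\}$ is isomorphic to $\mathcal{R}_{4,k-2}$. One small simplification you could make: since part (a) already places each $m_i$ in $\min\mathscr{P}$, pairwise incomparability of the $m_i$ is automatic (distinct minimal elements are incomparable), so the $\mathcal{R}_3$-based argument for that particular point is not needed; $\mathcal{R}_3$ is still essential, however, for showing that each $m_i$ lies under no third maximum and hence that the $m_i$ are pairwise distinct.
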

\begin{proof}
Since $\mathcal{R}_2$ is not  peak-subposet of $\P$ then  $x\in z_{\vartriangle}\cap z'_{\vartriangle}$ implies that $x\in\min\P$ because otherwise there exists $y\prec x$ and then the subposet $\{y,x,z,z'\}$ is of the form $\mathcal{R}_2$. Now, if $x\neq x'\in z_{\vartriangle}\cap z'_{\vartriangle} $ then $\mathcal{R}_{4,0}$ is peak-subposet of $\P$ which is a contradiction. Thus, the set $z_{\vartriangle}\cap z'_{\vartriangle}$ is a singleton.   Clearly the converse implication is true. On other hand, since $\P$ is a connected poset then each maximal  point $z$ has  at least one neighbor, but $z$ does not have three neighbor points. Indeed, if $z_1,z_2,z_3$ are distinct neighbors of $z$  with $x_i\in z_\vartriangle\cap (z_i)_{\vartriangle}$ then we have the subposet 
\[\xymatrix@=1.5em{z_1\ar@{-}[rd]&&z\ar@{-}[rd]\ar@{-}[ld]&&z_2\ar@{-}[ld] &z_3\ar@{-}[ld]\\
&x_1&&x_2&x_3.\ar@{-}[llu]
}\]
If $x_1,x_2,x_3$ are three distinct points then $\{z,x_1,x_2,x_3\}$ is a peak-subposet of type $\mathcal{R}_1$, a contradiction. Suppose that two of the $x_i$ are equal, for instance $x_1=x_2$. Then $\{z_1,z,z_2,x_1\}$ is a peak-subposet of type $\mathcal{R}_3$, a contradiction. Thus $z$ has at most two neighbors. Finally, if each maximal point has exactly two neighbor points then $\mathcal{R}_{4,n}$  is peak-subposet of $\P$ for some $n\geq 0$, which is contradictory, and  we are done.
\end{proof}

Actually, the posets of  type $\mathbb{A}$ can be viewed as posets associated to certain quivers which are obtained from Dynkin quivers of type $\mathbb{A}$  by adding some new arrows. To explain this, we need the following definitions:\\

Let $Q$ be an acyclic quiver and let $\P_Q=Q_0$ be its set of vertices. We define an order on $\P_Q$ by $x\preceq y$ if and only if there exists a path from $x$ to $y$ in $Q$. We say that $\P_Q$ is the \textit{poset associated to the quiver} $Q$. Note that there is a unique poset associated to a finite acyclic quiver, but the converse is false in general.
As an example, the poset  associated to  the quiver  \[\xymatrix@=1.5em{1\ar[rr]\ar[rd]&&2\\&3\ar[ru]}\] is  $\{1<3<2\}$. However, the Hasse quiver of this poset is $\xymatrix@=1.5em{1\ar[r]&3\ar[r]&2}$. Thus, the two quivers have the same associated poset.  As another example, corresponding to the poset $\P=\lbrace 1,2\rbrace$ together with the usual ordering $1< 2$, we get countably many quivers with $n$ arrows from $1$ to $2$ for any natural number $n\in\mathbb{N}$.\\

Recall that a vertex $x\in Q_0$ is said to be a \textit{sink vertex }  (respectively \textit{source vertex}) if there is no arrow $\alpha$ in $Q_1$
such that  $s(\alpha)=x$ (respectively $t(\alpha)=x$), where 
$s(\alpha)$ is the starting vertex and $t(\alpha)$ is the target vertex of the arrow $\alpha$.\\

Let $Q$ be a Dynkin quiver of type $\mathbb{A}$ and let $z\in Q_{0}$ be a sink vertex. The maximal full subquiver $Q^{(z)}$ of $Q$ with $z$ as the unique sink is called the \textit{$z$-subquiver} of $Q$. In other words, the vertices of  $Q^{(z)}$ are the vertices in  the support $\text{Supp }I(z)$ of the indecomposable injective representation $I(z)$ at vertex $z$. 

\begin{example} \label{zsubquiver} The quiver 
$Q=\xymatrix@=1.5em{1\ar[r]&2&3\ar[l]\ar[r]&4\ar[r]&5&6\ar[l]\ar[r]&7}$ of type $\mathbb{A}_7$  contains the $2$-subquiver 
$\xymatrix@=1.5em{1\ar[r]&2&3\ar[l]}$, 
the $5$-subquiver 
$\xymatrix@=1.5em{3\ar[r]&4\ar[r]&5&6\ar[l]}$, and the $7$-subquiver $\xymatrix@=1.5em{6\ar[r]&7}$.
\end{example}

We will now add new arrows to our quiver $Q$ as follows.  
\begin{definition}\label{aliensetDef}
A set $F=\{\alpha_1,\dots,\alpha_t\}$ of new arrows for $Q$ is called an \textit{alien set} for $Q$ if the following conditions  hold.

\begin{itemize}
\item[(a)] For each $\alpha\in F$ there exists a sink vertex $z$ in $Q$ such that $s(\alpha),t(\alpha)\in \text{Supp }I(z)$.

\item[(b)] $t(\alpha)$  is not a source vertex in $Q$ unless it is an extremal vertex in $Q$.

\item [(c)]  For all $\alpha\in F$,  the  arrow $\alpha$ is the unique path from $s(\alpha)$ to  $t(\alpha)$ in $Q^F$, where $Q^F$  is  the quiver such that $Q^F_0=Q_0$ and $Q^F_1=Q_1\cup F$. 
 
\item[(d)] The quiver $Q^F$ is acyclic.
\end{itemize}
The arrows in an alien set for $Q$ will be called \textit{alien arrows}.
\end{definition}

\begin{example}\label{alienset1}
 If $Q$ is the quiver $\xymatrix@=1.5em{1\ar[r]&2\ar[r]&3&4\ar[l]&5\ar[l]&6\ar[l]}$ of type $\mathbb{A}_6$
then  $F=\{\ \alpha:5\rightarrow2\}$ is an alien set for $Q$ and $Q^F$ is the quiver in Example \ref{Hassequiver}. Note that the poset $\P_{Q^F}$ is the one-peak poset of type $\mathbb{A}$ defined in Example \ref{onepeak}.
\end{example}

\begin{example}\label{alien set}Let $Q$ be the quiver in Example \ref{zsubquiver}. The set $$F=\lbrace\alpha:3\rightarrow 1,\hspace{0.1cm}\beta:6\rightarrow 4\rbrace$$   is an  alien set  for $Q$. Moreover,  the quiver $Q^F$ is equal to

\begin{center}
\begin{tikzcd}[row sep= tiny, column sep = small]
1\arrow[dr]&&3\arrow[dl]\arrow[ll, "\alpha",swap]\arrow[dr]&&&&\\
&2&&4\arrow[dr]&&6\arrow[ll,"\beta",swap]\arrow[dl]\arrow[dr]&\\
&&&&5&&7\\ 
\end{tikzcd}
\end{center}
 Note that the poset $\P_{Q^F}$ associated to  $Q^{F}$ is the three-peak poset of type $\mathbb{A}$ defined in Example \ref{posetstypeA}.
\end{example}

The following  proposition characterizes posets of  type $\mathbb{A}$.

\begin{proposition} \label{0}

A poset $\P$ is of type $\mathbb{A}$ if and only if there exists a Dynkin quiver $Q$ of type $\mathbb{A}$ and an  alien set $F$ for $Q$ such that $\P=\P_{Q^F}$ is the  poset associated to the quiver $Q^F$.
\end{proposition}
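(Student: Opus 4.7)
The plan is to prove the two implications separately.

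For the forward direction, I would first establish a key lemma: for every vertex $x$ of $Q^F$, the set of sinks of $Q$ reachable from $x$ in $Q^F$ coincides with the set reachable in $Q$ alone. One argues by induction on the number of alien arrows appearing in a $Q^F$-path from $x$ to a sink $z$. If $\alpha\colon a\to b$ is the last alien arrow on such a path, condition (a) yields a sink $z'$ with $a,b\in\text{Supp}\,I(z')$, while condition (b) forces $b$ to be either a non-source or an extremal source of $Q$; in either case $b$ reaches exactly one sink in $Q$, so $z=z'$ and the alien step can be replaced by a $Q$-path from $a$ to $z$, reducing the count of alien arrows. Granted this lemma, every vertex of $\P$ reaches at most two maxima (since this is already true in $Q$ of type $\mathbb{A}$), which rules out $\mathcal{R}_3$. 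The support $\text{Supp}\,I(z)$ in $Q$ consists of two chains meeting at $z$, so has width $\leq 2$; alien arrows with both endpoints in this support only add comparabilities, so the width remains $\leq 2$ and $\mathcal{R}_1$ is impossible. For $\mathcal{R}_2$, an element $x$ below two maxima must be a non-extremal source of $Q$, so condition (b) forbids any alien arrow into $x$ and $x$ has no strict predecessor in $\P_{Q^F}$. For $\mathcal{R}_{4,n}$, each $x_i$ pins the pair $\{z_{i-1},z_i\}$ to a pair of consecutive sinks in the linear sink order of $Q$; the resulting $n+2$ pairs would form a cycle on the sinks $z_1,\dots,z_{n+2}$, which is impossible inside the path structure given by $Q$'s sink order.

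For the converse direction, suppose $\P$ is of type $\mathbb{A}$. Using Lemma~\ref{neighbor} together with the connectedness of $\P$, one orders the maxima as a path $z_1,\dots,z_r$ in the neighbor graph and sets $m_i=(z_i)_\vartriangle\cap(z_{i+1})_\vartriangle$ for $i=1,\dots,r-1$; by Lemma~\ref{neighbor}(a) each $m_i$ is a single minimum of $\P$. For every $i$ the subposet $(z_i)_\vartriangle\setminus\{z_i\}$ has width $\leq 2$ (from absence of $\mathcal{R}_1$), so Dilworth's theorem supplies a decomposition into two chains $L_i,R_i$; since $m_{i-1}$ and $m_i$ (when they exist) are incomparable minima below $z_i$, I would choose the decomposition so that $m_{i-1}\in L_i$ and $m_i\in R_i$, inserting an extremal-source substitute when $i=1$ or $i=r$. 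Arranging the blocks $L_i,z_i,R_i$ on a line, identifying the shared $m_i$ that sits in both $R_i$ and $L_{i+1}$, and orienting every chain towards its sink $z_i$, produces a Dynkin quiver $Q$ of type $\mathbb{A}_{|\P|}$. I would then define $F$ to be the set of covering relations of $\P$ that are not already arrows of $Q$.

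Conditions (a)--(d) for $F$ then reduce to quick checks. (a) holds because any covering pair of $\P$ lies inside some $(z_i)_\vartriangle$, whose vertices all reach $z_i$ in $Q$. (b) holds because no cover of $\P$ can target a shared minimum $m_i$ or an extremal source (both are minima of $\P$), so every alien target is a non-minimum chain element, hence not a source of $Q$. (c) holds because a cover has no strict intermediate in $\P$, so any $Q^F$-path between its endpoints is direct. (d) holds because a cycle in $Q^F$ would yield a strict $\prec$-cycle in $\P$. The identity $\P_{Q^F}=\P$ is immediate: every arrow of $Q^F$ encodes a relation of $\P$ by construction, and conversely every cover of $\P$ is an arrow of $Q^F$, so the transitive closures agree.

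The main obstacle is the converse direction, and specifically carrying out the block-by-block chain decomposition in a globally consistent way. Lemma~\ref{neighbor} is the essential structural input: it supplies both the linear ordering of the maxima and the uniqueness of the shared minima, which together guarantee that the local $2$-chain decompositions glue into a single linear arrangement rather than into a branching tree. Once this is set up, the fact that shared minima of $\P$ can never be targets of covers makes conditions (a)--(d) essentially automatic.
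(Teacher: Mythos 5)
Your proof is correct but structured quite differently from the paper's. For the sufficiency direction (from $Q^F$ to type $\mathbb{A}$) you isolate an explicit reachability lemma---that alien arrows do not change the set of sinks reachable from any vertex---and then deduce the avoidance of each forbidden peak-subposet from it. The paper argues case by case directly from the alien-set axioms, relying on the same fact implicitly (e.g.\ in the claim $z_\vartriangle = Q^{(z)}_0$) but never stating it; your version makes the key structural invariant explicit and easier to check, and in particular your $\mathcal{R}_{4,n}$ argument via a cycle of consecutive-sink pairs is cleaner than the paper's one-line dismissal. For the necessity direction the paper proceeds by induction on the number $r$ of peaks: after handling $r=1$ with a Dilworth decomposition, it peels off a maximal point with a unique neighbor (supplied by Lemma~\ref{neighbor}(b)) and glues the inductively obtained quivers at the shared source. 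Your argument is non-inductive: order the maxima along the path given by the neighbor graph, Dilworth-decompose every down-cone simultaneously, align the two-chain decompositions so that the shared minima $m_i$ sit at the interfaces, then read off $Q$ and $F$ in one pass. Both routes work; the paper's induction avoids arguing about global consistency of the chain choices, while your direct construction exhibits $Q$ and $F$ more transparently. Two details you leave implicit but should state: the neighbor graph is connected (otherwise $\P$ would be disconnected), so it really is a path by Lemma~\ref{neighbor}; and $m_{i-1}\neq m_i$, hence their incomparability as distinct minima, follows from forbidding $\mathcal{R}_3$, which is exactly what lets you place them in opposite chains $L_i$, $R_i$.
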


\begin{proof}
In order to prove the necessary condition we proceed by induction on the number $r$ of peaks in $\P$. First we suppose that $\P$ is a one-peak poset with a maximal point $z$. Since $\mathcal{R}_1$ is not peak-subposet of $\P$  we conclude that $w(\P)\leq2$. Thus, if $w(\P)=1$ then  $\P$ is a chain and it can be viewed as a  linearly oriented quiver $Q$ of type $\mathbb{A}$. Clearly, if $F=\emptyset$ then $\P$ is the  poset associated  to the quiver $Q^{F}$.  On the other hand, if $w(\P)=2$ then by Dilworth's theorem $\P^{-}$ is a sum of two chains $\P_1=\lbrace x_1 \prec \dots \prec x_s \rbrace$ and $P_2=\lbrace y_1\prec\dots\prec y_t \rbrace$. Given the quiver 

$$Q=\xymatrix@=1.5em{x_1\ar[r]&\cdots\ar[r]&x_s\ar[r]&z&y_t\ar[l]&\cdots\ar[l]&y_1\ar[l]},$$
the set $F=F_1\cup F_2$ such that $F_1=\lbrace\alpha: x\rightarrow y\hspace{0.1cm} \vert\hspace{0.1cm} y\in\P_2 \mbox{ covers } x\in\P_1 \rbrace$ and $F_2=\lbrace\alpha: y\rightarrow x \hspace{0.1cm} \vert\hspace{0.1cm} x\in\P_1 \mbox{ covers } y\in\P_2 \rbrace$ is an alien set for $Q$. Let $\alpha:x\rightarrow y$ be an alien arrow in $F$. We suppose that there is another path from $x$ to $y$ in $Q^{F}$, then there exists an alien arrow  $\alpha':x'\rightarrow y'$ in $Q^{F}$ such that $x\preceq x'$, $y'\preceq y$ and $x\neq x'$ or $y\neq y'$. However, in this case, $y$ does not cover $x$ which is a contradiction. Thus, $F$ is an  alien set for $Q$ and $\P$ is the  poset $\P_{Q^F}$ associated   to the quiver $Q^F$.\\

Now, we suppose that the assertion is true for any $h$-peak poset of type $\mathbb{A}$, for all $1\leq h\leq r-1$. Let $\P$ be  a 
$r$-peak poset of type $\mathbb{A}$. By Lemma \ref{neighbor} part (b)  we can choose a point $z\in\max\P$ such that $z$ has  a unique   neighbor.  The peak-subposets  $\bar{\P}=\lbrace z_1,\dots,z_{r-1} \rbrace_{\vartriangle}$ and $\P_z=z_{\vartriangle}$ of $\P$ are two  posets of type $\mathbb{A}$, where $\max\P=\lbrace z_1,\dots,z_{r-1},z\rbrace$. By induction there are two Dynkin quivers $Q'$ and $Q''$  of type $\mathbb{A}$ and two alien sets $F'$ and $F''$ for  $Q'$ and $Q''$ respectively  such that   $\bar{\P}$ is the  poset associated to the quiver  $Q'^{F'}$ and $\P_z$ is the poset associated to the quiver $Q''^{F''}$.  We suppose that $z'\in(\max\P)\setminus\lbrace z\rbrace$ is the neighbor of the point  $z$.   By Lemma \ref{neighbor} part (a) we conclude that  $z_{\vartriangle}\cap z'_{\vartriangle}=\lbrace x\rbrace$, where $x\in\min\P$, in other words, $x$ is source vertex in $Q'$ and $Q''$. Clearly $\bar{\P}\cap \P_z=\lbrace x\rbrace$, otherwise $z$ would have two neighbors. Now we are going to  prove that the point $x$ is an extremal vertex of both quivers $Q'$ and $Q''$. Since $Q''$ has a unique sink vertex $z$ and $x$ is a source vertex in $Q''$ then $x$ is an extremal vertex in $Q''$. Moreover, if $x$ is a source vertex which is not an extremal vertex in $Q'$ then $\mathcal{R}_3$ would be a peak-subposet of $\P$ and in this way we get a contradiction. Then the quiver $Q=(Q_0,Q_1)$ such that $Q_0=Q'_0\cup Q''_0$ and $Q_1=Q'_1\cup Q''_1$ is a Dynkin quiver of type $\mathbb{A}$. Note also that $F=F'\cup F''$ is an alien set for $Q$ because there is no alien arrow ending at $x$, otherwise $\mathcal{R}_2$ would be a peak-subposet of $\P$. Furthermore, $\P$ is the poset associated to the quiver $Q^{F}$.\\

The sufficiency of the assertion is proved as follows; let us suppose that $\P$ is the  poset $\P_{Q^F}$ associated  to a quiver $Q^{F}$, where $Q$ is a Dynkin quiver of type $\mathbb{A}$ and $F$ is an alien set for $Q$, we shall prove that $\P$ is of  type $\mathbb{A}$.  Locally an alien arrow $\alpha\in F$ with $s(\alpha), t(\alpha)\in \text{Supp } I(z)$, where $z$ is a sink vertex in $Q$ is such that $s(\alpha)\neq z$, otherwise the quiver $Q^F$ would be cyclic. Then the maximal points in $\P$ are exactly the sink vertices in the quiver $Q$. Since the subposet $z_{\vartriangle}=Q^{(z)}_{0}$ of $\P$ is a poset of width at most two then $\mathcal{R}_1$ is not a peak-subposet of $\P$.  On the other hand, by Lemma \ref{neighbor} part (b), if $z,z'\in\max\P$ are neighbors and $x\in z_{\vartriangle}\cap z'_{\vartriangle}$ then  $x\in\min\P$, thus $x\in\min \P_Q$. Since $Q$ is a Dynkin quiver of type $\mathbb{A}$, then $x$ is a source vertex in $Q$. However, $x$ is a non extremal vertex in $Q$ because  an alien arrow  always connects two vertices in the same $z$-subquiver. Definition \ref{aliensetDef} part (b) implies that  $\P$ does not contain $\mathcal{R}_2$ as peak-subposet. Now, we suppose that $\P$ contains $\mathcal{R}_3$ as peak-subposet, that is,  there are three maximal points $z,z',z''$ in $\P$ and a  point $x\in\P$  such that $x\in z_{\vartriangle}\cap z'_{\vartriangle}\cap z''_{\vartriangle}$. Thus, by the same  arguments as above, $z,z'$ and $z''$ are sink vertices in $Q$. Moreover, since $\mathcal{R}_2$ is not a peak-subposet of $\P$, then $x$ is a minimal point in $\P$ which implies that $x$ is  a source vertex in the quiver $Q$.  Moreover, since $Q$ is a Dynkin quiver of type $\mathbb{A}$, we can suppose that there is no path in $Q$ from $x$ to $z''$; thus,   Definition \ref{aliensetDef} implies  that $x\nprec z''$ in $\P$, a contradiction. These arguments allow us to conclude that $\mathcal{R}_3$ is not peak-subposet of $\P$. In the same way, we can see that for all $n\geq 0$,  $\mathcal{R}_{4,n}$ is not a  peak-subposet of $\P$. 
\end{proof} 
A poset $\P$ is said to be \textit{locally of width $n$} or \textit{have local width $n$} if $n$ is the minimum integer such that for each $z\in\max\P$ it holds that $w(z_{\vartriangle})\leq n$. Clearly a poset of type $\mathbb{A}$ has local width  less than or equal to two. The following lemma describes sincere  posets of  type $\mathbb{A}$ and their socle-projective indecomposable modules.
\begin{lemma}\label{1}
Let $\P$ be a poset of type $\mathbb{A}$.  Then
\begin{itemize}
\item[(a)] $\md_{sp}k\P$ is of finite representation type.
\item[(b)] $\P$  is a sincere poset if and only if $\P$ is isomorphic to one of the 
following posets:\vspace{0.2cm}

\begin{adjustbox}{max totalsize={0.9\textwidth}{0.9\textheight},center}
\begin{tabular}{|l|l|l|}
\hline 
\begin{tikzpicture}
 \node (1) at (0,0) {$\star_1$};
 
\node (A) at (0,0.5) {$\mathcal{S}^{(r)}_{1}$};
\node [right of=1, node distance=0.7cm](2) {$\star_2$};

\node [right of=2, node distance=0.7cm](3) {$\star_3$};
\node [right of=3, node distance=0.7cm] (4)  {$\cdots$};
\node [right of=4, node distance=0.7cm] (5) {$\star_{r}$};

\node [below  of=2, node distance=0.7cm] (2')  {$\circ$};
\node [below  of=3, node distance=0.7cm] (3')  {$\circ$};
\node [below  of=4, node distance=0.7cm] (4')  {$\dots$};
\node [below  of=5, node distance=0.7cm] (5')  {$\circ$};

   
    \draw [blue, thick, shorten <=-2pt, shorten >=-2pt] (2') -- (2);
    
\draw [blue,  thick, shorten <=-2pt, shorten >=-2pt] (3) -- (3');
    \draw [blue, thick, shorten <=-2pt, shorten >=-2pt] (5) -- (5');    
    
    \draw [blue, thick, shorten <=-2pt, shorten >=-2pt] (1) -- (2');
    
 \draw [blue, thick, shorten <=-2pt, shorten >=-2pt] (2) -- (3');
    
\end{tikzpicture} & 

\begin{tikzpicture}
 \node (1) at (0,0) {$\star_1$};
 
\node (A) at (0,0.5) {$\mathcal{S}^{(r)}_{2}$};
\node [right of=1, node distance=0.7cm](2) {$\star_2$};

\node [right of=2, node distance=0.7cm](3) {$\star_3$};
\node [right of=3, node distance=0.7cm] (4)  {$\cdots$};
\node [right of=4, node distance=0.7cm] (5) {$\star_{r}$};

\node [below  of=2, node distance=0.7cm] (2')  {$\circ$};
\node [below  of=3, node distance=0.7cm] (3')  {$\circ$};
\node [below  of=4, node distance=0.7cm] (4')  {$\dots$};
\node [below  of=5, node distance=0.7cm] (5')  {$\circ$};
\node [right of=5', node distance=0.7cm] (6')  {$\circ$};

   
    \draw [blue, thick, shorten <=-2pt, shorten >=-2pt] (2') -- (2);
    
\draw [blue,  thick, shorten <=-2pt, shorten >=-2pt] (3) -- (3');
    \draw [blue, thick, shorten <=-2pt, shorten >=-2pt] (5) -- (5');    
    
    \draw [blue, thick, shorten <=-2pt, shorten >=-2pt] (1) -- (2');
    
 \draw [blue, thick, shorten <=-2pt, shorten >=-2pt] (2) -- (3');
    
\draw [blue, thick, shorten <=-2pt, shorten >=-2pt] (6') -- (5);

\end{tikzpicture}

 & 
 
\begin{tikzpicture}
 \node (1) at (0,0) {$\star_1$};
 
\node (A) at (0,0.5) {$\mathcal{S}^{(r)}_{3}$};
\node [right of=1, node distance=0.7cm](2) {$\star_2$};

\node [right of=2, node distance=0.7cm](3) {$\star_3$};
\node [right of=3, node distance=0.7cm] (4)  {$\cdots$};
\node [right of=4, node distance=0.7cm] (5) {$\star_{r}$};

\node [below of=1, node distance=0.7cm] (1')  {$\circ$};
\node [below  of=2, node distance=0.7cm] (2')  {$\circ$};
\node [below  of=3, node distance=0.7cm] (3')  {$\circ$};
\node [below  of=4, node distance=0.7cm] (4')  {$\dots$};
\node [below  of=5, node distance=0.7cm] (5')  {$\circ$};
\node [right of=5', node distance=0.7cm] (6')  {$\circ$};

   \draw [blue,  thick, shorten <=-2pt, shorten >=-2pt] (1) -- (1');
    \draw [blue, thick, shorten <=-2pt, shorten >=-2pt] (2') -- (2);
    
\draw [blue,  thick, shorten <=-2pt, shorten >=-2pt] (3) -- (3');
    \draw [blue, thick, shorten <=-2pt, shorten >=-2pt] (5) -- (5');    
    
    \draw [blue, thick, shorten <=-2pt, shorten >=-2pt] (1) -- (2');
    
 \draw [blue, thick, shorten <=-2pt, shorten >=-2pt] (2) -- (3');
    
\draw [blue, thick, shorten <=-2pt, shorten >=-2pt] (6') -- (5);

\end{tikzpicture} 
 
  \\ 
\hline 
\end{tabular} 
\end{adjustbox}\vspace{0.2cm}
for some $r\geq 1$. Furthermore, the module $M=(M_x,{_y}h_x)_{x,y\in\P}$ in $\md_{sp}(k\P)$ such that $M_x=k$ for all $x\in\P$  and ${_y}h_x=\mbox{id}_k$ for each $x\preceq y$ in $\P$  is the unique sincere indecomposable object in $\md_{sp}(k\P).$
\end{itemize}
\end{lemma}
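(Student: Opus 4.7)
The plan is to prove part (b) first and then deduce (a) from it using Proposition \ref{ind}.

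\textbf{Forward direction of (b).} For each $\P \in \{\mathcal{S}^{(r)}_1, \mathcal{S}^{(r)}_2, \mathcal{S}^{(r)}_3\}$ and the specified module $M$ with $M_x = k$ and ${_y}h_x = \mathrm{id}_k$, I would directly verify three properties. First, $M$ is socle-projective via condition (b) of Proposition \ref{sp module}: each ${_z}h_x = \mathrm{id}_k$ has kernel zero. Second, $M$ is indecomposable, because any endomorphism $(f_x)_{x\in\P}$ satisfies $f_y = f_x$ whenever $x \preceq y$, and since $\P$ is connected all $f_x$ coincide, giving $\mathrm{End}(M) \cong k$. Third, for sincerity I pass to the peak $\P$-space $U = \bm\theta(M)$: in each of the three families every non-maximal element belongs to $\min\P$, so $\underline{U_x} = 0$, yielding $d_x = \dim U_x = 1$. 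Combined with $d_x = 1$ for maximal $x$, this gives $\csupp U = \P$.

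\textbf{Reverse direction of (b).} Suppose $\P$ is a sincere poset of type $\mathbb{A}$. By Lemma \ref{neighbor}, the maximal elements $z_1, \ldots, z_r$ form a linear neighbor chain with consecutive pairs sharing a unique common element $w_i \in \min\P$. The key structural claim is that every non-maximal element of $\P$ is already minimal. Otherwise, pick $x \in \P^-$ with some $y \prec x$ and a maximal $z \succ x$; then in the one-peak chain subposet $\{y, x, z\}$, indecomposable peak spaces are flags of the form $0 \subseteq \cdots \subseteq 0 \subseteq k \subseteq \cdots \subseteq k$, whose coordinate at the middle element $x$ is always zero (its dimension coincides with that of the element directly below). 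The type $\mathbb{A}$ width-two restriction on $z_\vartriangle$ propagates this obstruction across all indecomposables, contradicting sincerity. Hence $\P^- \subseteq \min\P$, and a short case analysis on how many minimal elements sit below each peak (at most two by width, with extra tail elements only allowed at extremal peaks by the absence of $\mathcal{R}_2, \mathcal{R}_3, \mathcal{R}_{4,n}$) leaves precisely the three families. Uniqueness of the sincere indecomposable follows from a dimension-vector computation: any sincere indecomposable has coordinate vector equal to the all-ones vector, and the coordinate vector of a sincere $\mathcal{S}$-space determines the indecomposable in $\repr$, so it is isomorphic to the all-$k$ module.

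\textbf{Deduction of (a).} By Proposition \ref{ind}, every indecomposable in $\md_{sp}(k\P) \cong \repr$ is of the form $T_\mathcal{S}(L)$ for some sincere peak-subposet $\mathcal{S}$ of $\P$ and sincere $\mathcal{S}$-space $L$. Any peak-subposet of a poset of type $\mathbb{A}$ remains of type $\mathbb{A}$ (forbidden peak-subposets are inherited), so by (b) each sincere peak-subposet is one of the $\mathcal{S}^{(r)}_i$ and possesses a unique sincere indecomposable. Since $\P$ is finite it has only finitely many peak-subposets, yielding only finitely many indecomposables in total and hence finite representation type.

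\textbf{Main obstacle.} The hardest step is the reverse direction of (b) — ruling out all sincere posets of type $\mathbb{A}$ outside the three families. A clean route invokes Kleiner's classification of sincere one-peak posets of finite type for $r=1$ and Kosakowska's extension for $r \geq 2$, intersected with the forbidden-subposet conditions $\mathcal{R}_1,\mathcal{R}_2,\mathcal{R}_3,\mathcal{R}_{4,n}$; a fully structural self-contained argument via the neighbor-chain decomposition and dimension-vector case work is possible but delicate, and is the crux of the lemma.
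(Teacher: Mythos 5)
Your plan inverts the paper's logical order, and this is where the main issue lies. The paper proves (a) directly via Simson's finiteness criterion (Theorem~\ref{tipof}), checking that every forbidden poset $\mathcal{P}_1,\dots,\mathcal{P}_{110}$ from Simson's list contains some $\mathcal{R}_i$ as a peak-subposet; it \emph{then} uses the known finiteness to invoke the Kleiner and Kosakowska classifications of sincere posets of finite representation type and read off the three families for (b). You propose to prove (b) first and deduce (a) from it. Your deduction of (a) from (b) via Proposition~\ref{ind} is in fact a genuinely cleaner argument than the paper's (no case-by-case check of 110 posets), \emph{provided} (b) is available. But the hard direction of (b) — showing a sincere poset of type $\mathbb{A}$ must be one of the $\mathcal{S}^{(r)}_i$ — cannot, in your scheme, be proved by citing Kleiner/Kosakowska, since those classifications apply to sincere posets \emph{of finite representation type}, which is exactly what (a) would supply; your proposed ordering would then be circular. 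So your ``clean route'' in the final paragraph is not available to you.

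This forces you onto the self-contained structural argument, and there the key step is left unproved. You correctly observe that on the chain $y\prec x\prec z$ the coordinate of an indecomposable peak space vanishes at $x$ whenever $y$ is in its coordinate support, but the claim that ``the type $\mathbb{A}$ width-two restriction on $z_\vartriangle$ propagates this obstruction across all indecomposables, contradicting sincerity'' is an assertion, not an argument. The coordinate vector of an indecomposable for all of $\P$ does not restrict in an obvious way to the three-element chain, and one still has to rule out that some auxiliary element $y'\prec z$ not below $x$ changes the picture. The subsequent ``short case analysis'' that the remaining posets are exactly the three families is also not written out. You do acknowledge that this step is ``delicate'' and is ``the crux of the lemma,'' which is precisely right; as it stands, though, this is a genuine gap, not a compressed but complete proof. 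The forward direction of (b) (direct verification that the all-$k$ module on $\mathcal{S}^{(r)}_i$ is socle-projective, indecomposable, and sincere) is correct, and your observation that in each family $\P^-\subseteq\min\P$ (so $\underline{U_x}=0$) is the right reason sincerity holds.
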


\begin{proof}
According to Theorem \ref{tipof} part (c), to prove the part (a) is enough to observe that no  poset listed in \cite[section 5]{simson3} is a peak-subposet of $\P$. Indeed, the posets of the series $\mathcal{P}_{2,n+1}$, $\mathcal{P}''_{2,n}$, $\mathcal{P}_{3,n}$, $n\geq 0$ and the poset $\mathcal{P}_{2,0}$ contain $\mathcal{R}_3$ as peak-subposet. Moreover, the posets of the series $\mathcal{P}'_{2,n+1}$, $\mathcal{P}''_{3,n}$, $n\geq 0$ contain  $\mathcal{R}_1$ as peak-subposet and the  posets of the series $\mathcal{P}'_{3,n}$, $n\geq 0$ contain $\mathcal{R}_2$ as peak-subposet. Note that by definition $\P$ does not contain as a peak-subposet  a poset of the series  $\mathcal{P}_{1,n}$, $n\geq 0$.   Moreover, we note that any poset of the form  $\lbrace \mathcal{P}_4,\dots,\mathcal{P}_{110}\rbrace$  contains as peak-subposet to  $\mathcal{R}_{i}$ for some $i=1,2,3$.\\  

In order to prove $(b)$, first we consider that $\P$ is one-peak poset. In this case, according to the list of sincere one-peak-posets (see \cite{kleiner75}) we have that $\P=\mathcal{S}^{(1)}_i$ for some $i=1,2,3$. Moreover,  we observe in the known lists of sincere $r$-peak posets of finite  type that  $\mathcal{F}^{(2)}_1=\mathcal{S}^{(2)}_1,\mathcal{F}^{(2)}_2=\mathcal{S}^{(2)}_2,\mathcal{F}^{(2)}_5=\mathcal{S}^{(2)}_3$ are  the sincere two-peak  posets of  type $\mathbb{A}$ (see \cite{justina}), $\mathcal{F}^{(3)}_{44}=\mathcal{S}^{(3)}_1$, $\mathcal{F}^{(3)}_{46}=\mathcal{S}^{(3)}_2,\mathcal{F}^{(3)}_{53}=\mathcal{S}^{(3)}_3$ are the sincere three-peak  posets  of type $\mathbb{A}$ (see \cite{justina1}) and $\mathcal{F}^{(r)}_{8}=\mathcal{S}^{(3)}_1$,$\mathcal{F}^{(r)}_{10}=\mathcal{S}^{(3)}_2,\mathcal{F}^{(r)}_{13}=\mathcal{S}^{(3)}_3$  are the sincere $r$-peak  posets of  type $\mathbb{A}$,  with $r\geq 4$ (see \cite{justina2}). Thus, the first  part of (b) is true. Now, we observe in the mentioned lists that for each $i=1,2,3.$ and for each $r\geq 1$ the sincere $r$-peak poset $\mathcal{S}^{(r)}_{i}$ has only one sincere prinjective indecomposable  $k\mathcal{S}^{(r)}_{i}$-module $M=(M_x,{_y}h_x)_{x,y\in\mathcal{S}^{(r)}_{i}}$ such that $M_x=k$ and ${_y}h_x=\mbox{id}_k$ for each $x\preceq y$. In this way, Lemma \ref{prin}  implies that the second part of (b) is true. 
\end{proof}

The following lemma will be used to prove the categorical equivalence proposed in Theorem \ref{omega}.
\begin{lemma} \label{2} Let $\P$ be a poset of type $\mathbb{A}$ associated to the quiver $Q^{F}$ as in Proposition \ref{0}. Then

\begin{itemize}
\item[(a)] Up to isomorphism, any indecomposable  $k\P$-module  $M=(M_x,{_y}h_x)_{x,y\in\P}$ in $\md_{sp}(k\P)$ is such that $M_x=k$ and ${_y}h_x=\mbox{id}_k$ for all $x\preceq y$ in $\supp M$.
\item[(b)]The support $\supp M$ of an indecomposable object in the category $\md_{sp}k\P$  is  connected as a subset of the quiver  $Q$.

\end{itemize}
\end{lemma}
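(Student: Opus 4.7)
The plan is to reduce to sincere peak-subposets via Proposition~\ref{ind} and then to unpack the subposet-induced functor $T_{\mathcal{S}}$ explicitly.

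Let $M\in\md_{sp}(k\P)$ be indecomposable. By Lemma~\ref{prin} and Proposition~\ref{ind}, $\bm\theta(M)=T_{\mathcal{S}}(L)$ where $\mathcal{S}$ is a sincere peak-subposet of $\P$ and $L$ is a sincere peak $\mathcal{S}$-space. Since being of type $\mathbb{A}$ is inherited by peak-subposets (Definition~\ref{defposetypeA} together with transitivity of the peak-subposet relation), $\mathcal{S}$ is itself of type $\mathbb{A}$, and Lemma~\ref{1}(b) forces $\mathcal{S}\cong\mathcal{S}^{(r)}_i$ for some $r\geq 1$ and $i\in\{1,2,3\}$, with $L=\bm\theta(\bar M)$ for $\bar M$ the unique sincere ``all-ones'' $k\mathcal{S}$-module. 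Writing $L^\bullet=\bigoplus_{z\in\max\mathcal{S}}L_z=k^r$ with canonical basis $e_1,\dots,e_r$, an explicit computation gives $L_{\star_j}=k\cdot e_j$, while each $L_{\circ_j}$ is a one-dimensional ``diagonal'' subspace of $\bigoplus_{\circ_j\prec\star_k\text{ in }\mathcal{S}}L_{\star_k}$.

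The crux of (a) is to show $\dim\hat L_x\leq 1$ for every $x\in\P$, where $\hat L:=T_{\mathcal{S}}(L)$. The linchpin is that any $x\in\P$ lies below at most two peaks of $\P$; otherwise three distinct peaks $z_1,z_2,z_3$ above $x$ would make $\{x,z_1,z_2,z_3\}$ a peak-subposet of $\P$ isomorphic to $\mathcal{R}_3$, contradicting Definition~\ref{defposetypeA}. The case analysis is then: if $x$ lies below a unique peak $\star_j\in\max\mathcal{S}$, then $\pi_x$ projects $L^\bullet$ onto the one-dimensional $L_{\star_j}$, so $\hat L_x\subseteq L_{\star_j}$; if $x$ lies below two peaks $\star_j,\star_{j+1}$ of $\mathcal{S}$, then Lemma~\ref{neighbor} makes them neighbors in $\P$ with a unique shared minimum, which by the structure of $\mathcal{S}^{(r)}_i$ must be $\circ_{j+1}$, and the fact that $\mathcal{S}$ carries the induced order from $\P$ forces $\circ_{j+1}$ to be the only element of $\mathcal{S}$ below $x$, hence $\hat L_x=\pi_x(L_{\circ_{j+1}})=L_{\circ_{j+1}}$, again one-dimensional. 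Thus $M_x=\hat L_x\in\{0,k\}$.

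For $x\preceq y$ both in $\supp M$, the structure map ${_y}\pi_x$ is the restriction of $\pi_y:L^\bullet\to L^\bullet_y$ to $\hat L_x$, landing in $\hat L_y$ because $\pi_y\pi_x=\pi_y$. The description above shows that $\hat L_x$ and $\hat L_y$ are each spanned by the projection of a common $L_{y_0}$ with $y_0\in\mathcal{S}$, $y_0\preceq x\preceq y$, and $\pi_y(L_{y_0})\neq 0$ since $y_0$ lies below some $z\in\max\mathcal{S}$ with $y\preceq z$. Hence ${_y}\pi_x$ is a nonzero map between one-dimensional spaces, i.e., an isomorphism; choosing compatible bases converts it into $\mathrm{id}_k$, proving (a). I expect the two-peak case in the dimension bound to be the main obstacle, since that is where the forbidden peak-subposets of Definition~\ref{defposetypeA} enter, through Lemma~\ref{neighbor}. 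Finally, for (b), the same explicit description yields $\supp M=\bigcup_{y_0\in\mathcal{S}}\bigcup_{z\in\max\mathcal{S},\ y_0\preceq z\text{ in }\mathcal{S}}[y_0,z]_\P$, a union of $\P$-intervals. Each $[y_0,z]_\P$ lies in the $z$-subquiver $Q^{(z)}$ of $Q$; since $Q^{(z)}$ has $z$ as its unique sink and since $z_\vartriangle$ has width at most two, the interval is a directed sub-path in the underlying graph of $Q$. Adjacent intervals share the peaks $\star_j\in\max\mathcal{S}$ or the minima $\circ_j$ between them, and combined with the Hasse-connectedness of $\mathcal{S}^{(r)}_i$ this exhibits $\supp M$ as a connected subquiver of $Q$, establishing (b).
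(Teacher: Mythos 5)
Your proof follows the paper's outline closely: both reduce via Lemma~\ref{prin}, Proposition~\ref{ind}, and Lemma~\ref{1}(b) to a sincere peak-subposet $\mathcal{S}\cong\mathcal{S}^{(r)}_i$ carrying the unique all-ones sincere module, and then analyze $\hat L = T_\mathcal{S}(L)$ where $L=\bm\theta(\bar M)$. The one genuine deviation is how you establish $\dim\hat L_x\leq 1$. The paper shows directly that $\pi_x(w_{y_0})=\sum_{z\in\max\mathcal{S},\ z\succeq x}e_z$ is independent of the choice of $y_0\in\mathcal{S}$ with $y_0\preceq x$, so the defining sum $\sum_{y_0\preceq x}\pi_x(L_{y_0})$ collapses to the single line $\langle w_x\rangle$; this is uniform and hands you the structure maps $w_x\mapsto w_y$ (hence $\mathrm{id}_k$ after rescaling) essentially for free. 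You instead bound the ambient space $\hat L^\bullet_x=\bigoplus_{x\preceq z\in\max\mathcal{S}}L_z$ through a case analysis on the number of peaks of $\mathcal{S}$ above $x$, invoking $\mathcal{R}_3$-avoidance and Lemma~\ref{neighbor}. Both routes are correct; yours makes the role of the forbidden peak-subposets more visible, at the cost of a case split and a slightly indirect derivation of the structure maps (your two-peak case quietly uses $x=\circ_{j+1}$, which follows from Lemma~\ref{neighbor}(a) since $\star_j{}_\vartriangle\cap\star_{j+1}{}_\vartriangle$ is a singleton containing both $x$ and $\circ_{j+1}$). For part (b) your interval decomposition agrees in spirit with the paper's identification $\supp M=[x,y]_Q$; be aware that the inclusion $[y_0,z]_\P\subseteq Q^{(z)}_0$ and the claim that it is a connected sub-path of $Q^{(z)}$ both deserve justification: the former uses that no arrow of $Q^F$ enters $Q^{(z)}_0$ from outside it (a consequence of Definition~\ref{aliensetDef}(a)--(b)), and the latter uses that once $y_0\preceq a_\ell$ for some vertex $a_\ell$ of $Q^{(z)}$ on the far side of $z$, then $y_0\preceq a_m$ for every $a_m$ between $a_\ell$ and $z$ by transitivity.
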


\begin{proof}
Let $M=(M_x,{_y}h_x)_{x,y\in\P}$ be an indecomposable object in $\md_{sp}(k\P)$.  Then the image  $\bm\theta(M)=(\bm\theta(M)_x)_{x\in\P}$ of $M$ by the  adjustment functor $\bm\theta$ defined in Equation (\ref{adjustment}) is an indecomposable object in $\repr$.  Let $\mathcal{S}=\csupp(\bm\theta(M))$ be the coordinate support of $\bm\theta(M)$. Then the poset $\mathcal{S}$ is a peak-subposet of $\P$. Thus, Definition \ref{defposetypeA} implies that $\mathcal{S}$ is a  poset of type $\mathbb{A}$. Indeed, if $\mathcal{R}\in\{\mathcal{R}_1,\mathcal{R}_2,\mathcal{R}_3, \mathcal{R}_{4,n}\}$  is peak-subposet of $\mathcal{S}$ then $\mathcal{R}$ is peak-subposet of $\P$, a contradiction.   Also,  $\mathcal{S}$ is a sincere poset because the peak $\mathcal{S}$-space  $(\bm\theta(M)_x)_{x\in\mathcal{S}}$  is  sincere. Lemma \ref{1} part (b) implies that for some $r\geq 1$ and $i=1,2,3$, we have $\mathcal{S}=\mathcal{S}^{(r)}_i$. Moreover, by Proposition \ref{ind} we have that $\bm\theta(M)$ is isomorphic to $T_{\mathcal{S}}(L)$ where $L=(L_x)_{x\in\mathcal{S}}$ is a sincere peak $\mathcal{S}$-space in $\mathcal{S}$-spr  and $T_{\mathcal{S}}$ is the subposet induced functor defined in Equation (\ref{inducedfunctor}). Lemma \ref{prin} implies that $\bm\rho (L)=N=(N_x,{_y}g_x)_{x,y\in\mathcal{S}}$ is a sincere module in $\md_{sp}k\mathcal{S}$, and Lemma \ref{1} then proves that $N_x=k$, for all $x\in\mathcal{S}$ and ${_y}g_x=\mbox{id}_k$ for each $x\preceq y$ in $\mathcal{S}$. Hence,  Lemma \ref{prin} implies that $L=\bm \theta(N)$. Now, by definition of  $\bm \theta$ we have that  $L_z=k$ for all $z\in\max\mathcal{S}$. Let $\lbrace e_{z}\hspace{0.1cm} \vert\hspace{0.1cm}z\in\max\mathcal{S}\rbrace$ be  the standard basis  of the space $L^{\bullet}$,  then for all $x\in\mathcal{S}^{-}$ we have that  $L_x$ is the subspace of $L^{\bullet}$ generated by the vector $w_x=\sum_{z\succ x}e_z$. Moreover, let $\hat{L}=(\hat{L}_x)_{x\in\P}=T_{\mathcal{S}}(L)$ be the image of $L$ by the functor $T_{\mathcal{S}}$ then 
\[\hat{L}_{x}=
\begin{cases}
\langle w_x                                                                                                                                                                                                                                                                                                                                                                                                                                                                                                                                                                                        \rangle,  & \text{if }x\in(\max\mathcal{S})_{\vartriangle}\cap\mathcal{S}^{\triangledown}, \\
0, & \text{otherwise}.
\end{cases}\]

Thus, the image of $\hat{L}$ by the functor $\bm\rho$ is the $k\P$-module $M=M_{\hat{L}}=(\hat{L}_x,{_y}\pi_x)_{x,y \in\P}$ where ${_y}\pi_{x}: \hat{L}_x\to \hat{L}_y$ is such that $\lambda w_x \mapsto\lambda w_y$ if $x\preceq y$ in $(\max\mathcal{S})_{\vartriangle}\cap\mathcal{S}^{\triangledown}$ and ${_y}\pi_{x}=0$ if $x\preceq y$ and either $x$ or $y$ is not in $(\max\mathcal{S})_{\vartriangle}\cap\mathcal{S}^{\triangledown}$. It is easy to see that there is a  natural isomorphism between   $M_{\hat{L}}$ and the representation  described in (a).\\

To prove $(b)$ it is enough to see that the set $(\max\mathcal{S})_{\vartriangle}\cap\mathcal{S}^{\triangledown}$ is connected as a subset of the quiver $Q$ for any sincere peak-subposet $\mathcal{S}$ of $\P$. Note that  the poset $\mathcal{S}^{(r)}_1$ is a peak-subposet of  $\mathcal{S}=\mathcal{S}^{(r)}_i$ for all $i=1,2,3$. We suppose that $$\mathcal{S}^{(r)}_1=\lbrace z_1\succ x_2 \prec z_2, z_2\succ x_3 \prec z_3,\dots,z_{r-1}\succ x_r \prec z_{r} \rbrace$$ then $\lbrace z_1,\dots, z_r\rbrace\subseteq\max\P$ and since $\mathcal{R}_2\nsubseteq \P$ we have that $\lbrace x_2,\dots,x_r\rbrace\subseteq\min\P$. Thus, for each $z_i$, with $2\leq i\leq r-1$ the $z_i$-subquiver  $Q^{(z_i)}$ of $Q$ has the form $\xymatrix@=1.5em{x_i\ar[r]&\cdots\ar[r]&z_i&\cdots \ar[l]&x_{i+1}\ar[l]}$. Since each vertex in $Q^{(z_i)}_{0}$ belongs to the set $\lbrace z_{i}\rbrace_{\vartriangle}\cap \lbrace x_i,x_{i+1}\rbrace^{\triangledown}$,  then $Q^{(z_i)}_{0}\subset\Supp M$ for each $2\leq i\leq r-1$. Let $w$  (respectively $w'$) be the  left  (respectively right) extremal vertex of the quiver  associated to $\mathcal{S}$ and let $x$ (respectively $y$) be minimal element in $(x_2^{\triangledown}\cap(\P\setminus\mathcal{S}))\cup\lbrace w\rbrace $ (respectively $(x_r^{\triangledown}\cap(\P\setminus\mathcal{S}))\cup\lbrace w'\rbrace $ ) then it is easy to see that $\supp M=\left[x,y \right]_{Q}$, where $\left[x,y \right]_{Q}$ denote a interval of $Q$ which is a connected subset  of $Q$.\end{proof}


\section{Category of sp-diagonals}\label{Catdiagonals} In this section, we define a category  $\mathcal{C}_{(T,F)}$ of diagonals associated to a poset $\P$ of type $\mathbb{A}$ and we prove  in  Theorem \ref{omega} and Corollary \ref{corollary} that this category gives a geometric realization of the category of finitely generated socle-projective modules over the incidence $k-$algebra $k\P$.\\

Let $\P$ be a poset of type $\mathbb{A}$ associated to the quiver $Q^{F}$ as in Proposition \ref{0}. Thus,  $Q$ is a Dynkin quiver of type $\mathbb{A}_n$ and $F$ is an alien set for $Q$.  Let $T=\lbrace \tau_1,\dots,\tau_n\rbrace$ be the triangulation of a $(n+3)$-gon $\Pi_{n+3}$ such that $Q_T=Q$. A \textit {fan} in $T$ is a  maximal subset $\Sigma_v\subseteq T$ of at least two diagonals such that all  the diagonals in $\Sigma_v$  share the vertex $v$  of  $\Pi_{n+3}$. A diagonal $\tau\in \Sigma_v$ is said to be the \textit{peak-diagonal} of $\Sigma_v$ if it is maximal in $\Sigma_v$ in accordance with the order $\tau_x<\tau_y$ if and only if there is a path from the vertex $x$ to the vertex $y$ in the quiver $Q$. Geometrically, the peak-diagonal of a fan $\Sigma_v$  is the  diagonal  that can be obtained from each other diagonal in $\Sigma_v$ by a clockwise rotation around the vertex $v$ (see Figure \ref{fan}).
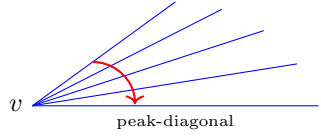
\begin{figure}[h]

\begin{center}
\begin{tikzpicture}
\tkzDefPoint(0,0){x}\tkzDefPoint(2,0){A}
\tkzDefPointsBy[rotation=center x angle 180/10](A,B,C,D,E,F,G,H,I){B,C,D,E,F,G,H,I,v}
\tkzLabelPoints[left](v)
\tkzDrawPolygon[thin,color=blue](v,B)
\tkzDrawPolygon[thin,color=blue](v,C)
\tkzDrawPolygon[thin,color=blue](v,D)
\tkzDrawPolygon[thin,color=blue](v,E)
\tkzDrawPolygon[thin,color=blue](v,F)
\tkzLabelSegment[below](v,B){{\tiny peak-diagonal}}
\draw [->, red, thick] (-1.1,1.2) arc (90:0:16pt);

\end{tikzpicture}
\end{center}
\caption{Fan of a triangulation} \label{fan}
\end{figure}
\begin{definition}\label{spdiagonal}
A diagonal $\gamma\notin T$ is  an \textit{sp-diagonal} if it satisfies the following conditions:

\begin{itemize}
\item[(a)] If $\gamma$  crosses  $\tau\in T$ then $\gamma$  crosses the peak-diagonal of a fan $\Sigma$ in $T$ such that $\tau\in\Sigma$. Henceforth, any diagonal $\gamma\notin T$ satisfying this condition will be called a \textit{$\star$-diagonal}.
\item[(b)]  For all alien arrows $\alpha\in F$ with $s(\alpha),t(\alpha)\in \text{supp }I(z)$, if $ \gamma$ crosses $\tau_{s(\alpha)}$ and $\tau_z$ then $\gamma$ also crosses $\tau_{t(\alpha)}$. Diagonals $\gamma\notin T$ satisfying this condition will be called  \textit{non-frozen} diagonals.
\end{itemize}
\end{definition}
\begin{example}\label{triangulation1}Let $Q$ be the quiver in Example \ref{zsubquiver} then $Q=Q_T$, where  $T$ is the following triangulation 

\begin{adjustbox}{scale=0.7,center}
\begin{tikzpicture}
\draw(0,0) node(w){\gon{2}{F}{D}{$\tau_1$}};
\draw(0,0) node(w){\gone{2}{F}{C}{$\tau_2$}};
\draw(0,0) node(w){\gone{2}{C}{G}{$\tau_3$}};
\draw(0,0) node(w){\gone{2}{G}{B}{$\tau_4$}};
\draw(0,0) node(w){\gone{2}{G}{A}{$\tau_5$}};
\draw(0,0) node(w){\gone{2}{A}{H}{$\tau_6$}};
\draw(0,0) node(w){\gone{2}{H}{J}{$\tau_7$}};
\end{tikzpicture}
\end{adjustbox}
 
In this case, the sets $\lbrace \tau_1,\bm\tau_2\rbrace$, $\lbrace \bm\tau_2,\tau_3\rbrace$, $\lbrace \tau_3,\tau_4,\bm\tau_5\rbrace$, $\lbrace \bm\tau_5,\tau_6\rbrace$ and $\lbrace \tau_6,\bm\tau_7\rbrace$ are fans of $T$. We have used bold font for the peak-diagonal of each fan. Note that, the  peak-diagonal corresponds to a sink vertex in the quiver $Q_T$. Moreover, let $Q^{F}$ be the quiver in the Example \ref{alien set}. Then, the diagonals 
\begin{center}
\begin{tikzpicture}
\draw(4,0) node(w){\gon{1}{E}{B}{$_{\gamma_2}$}};
 \draw(0,0) node(w){\gon{1}{F}{A}{$_{\gamma_1}$}};
\end{tikzpicture} 
\end{center}

are such that $\supp\gamma_1=\lbrace 3,4\rbrace$ and $\supp\gamma_2=\lbrace 1,2,3\rbrace$. Thus, $\gamma_1$ is not a $\star$-diagonal  because it crosses $\tau_4$ but it does not cross the 
peak-diagonal $\tau_5$ in the unique fan $\lbrace \tau_3,\tau_4,\bm\tau_5\rbrace$ of $\tau_4$, whereas  $\gamma_2$ is a $\star$-diagonal because it crosses $\tau_2$ which is the peak-diagonal in the fans 
$\{\tau_1,\bm\tau_2\}$ and $\{\bm\tau_2,\tau_3\}$ for $\tau_1$, $\tau_2$ and $\tau_3$.\\

Given the alien arrows $\alpha:3\rightarrow 1$ and $\beta:6\rightarrow 4$ (see Example \ref{alien set}), a diagonal $\gamma$ is frozen by $\alpha$  if $\gamma$ crosses $\tau_3$ and $\tau_2$  but not $\tau_1$; whereas the diagonals frozen by $\beta$ cross $\tau_6$ and $\tau_5$  but not $\tau_4$ (see Figure \ref{frozendiagonals}).

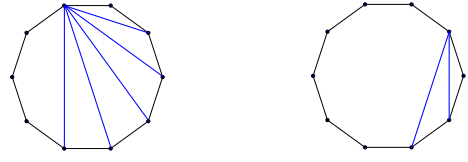
\begin{figure}[h]
  
\begin{center}
\begin{tikzpicture}
 \draw(0,0) node(w){\gon{1}{D}{B}{}};
\draw(0,0) node(w){\gone{1}{D}{A}{}};
\draw(0,0) node(w){\gone{1}{D}{H}{}};
\draw(0,0) node(w){\gone{1}{D}{I}{}};
\draw(0,0) node(w){\gone{1}{D}{J}{}};
\draw(4,0) node(w){\gon{1}{J}{B}{}};
\draw(4,0) node(w){\gone{1}{I}{B}{}};
\end{tikzpicture} 
\end{center}
\caption{Diagonals frozen by $\alpha$ (left) and  by $\beta$ (right).} \label{frozendiagonals}
\end{figure}
Note that, $\gamma_2$ is an sp-diagonal because it is non-frozen and  $\star$-diagonal.
\end{example}

The following lemma describes the relation between $\star$-diagonals and socle-projective modules in $\md kQ_T$.

\begin{lemma}\label{Ldiagonals} Let $\Theta:\mathcal{C}_T\to \md kQ_T$ be the equivalence of categories of Theorem~\ref{TRalf}, where  $Q_T$ is a Dynkin quiver of type $\mathbb{A}$. Then $\gamma$ is a $\star$-diagonal if and only if $\Theta(\gamma)$ is socle-projective.
\end{lemma}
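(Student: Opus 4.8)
The plan is to translate both sides of the equivalence into one and the same combinatorial condition on the interval $\supp\gamma$. Label the vertices of $Q_T$ by $1,\dots ,n$ so that its underlying graph is the path $1-2-\cdots -n$, and encode the orientation by the word $w\in\{L,R\}^{n-1}$ with $w_i=R$ if the edge $\{i,i+1\}$ is oriented $i\to i+1$ and $w_i=L$ otherwise; thus a vertex $x$ is a sink of $Q_T$ exactly when both edges at $x$ point towards $x$ (with the obvious reading at the extremal vertices $1$ and $n$). Since $T$ has no internal triangles, $Q_T$ is acyclic and $kQ_T=kQ_T/I$ is the hereditary path algebra; in particular $S_z$ is projective if and only if $z$ is a sink. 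By Lemma~\ref{Lralf2} the set $\supp\gamma$ is a connected subquiver of $Q_T$, hence an interval $\{p,p+1,\dots ,q\}$, and by construction $\Theta(\gamma)=M^\gamma$ is the thin representation supported on this interval with all structure maps along its edges equal to the identity.

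First I would compute $\operatorname{soc}\Theta(\gamma)$: it is the direct sum of the simples $S_x$ over those $x\in\supp\gamma$ that admit no arrow of $Q_T$ with source $x$ and target again in $\supp\gamma$; call these the \emph{relative sinks} of $\supp\gamma$. An interior vertex of the interval is a relative sink exactly when it is a sink of $Q_T$, so the only obstruction to socle-projectivity of $\Theta(\gamma)$ is for an endpoint of $\supp\gamma$ to be a relative sink that is not a sink of $Q_T$. Reading this off $w$, one finds for $p<q$ that the endpoint $p$ is such a ``bad'' vertex precisely when $p\ge 2$ and $w_{p-1}=w_p=L$, and symmetrically that $q$ is bad precisely when $q\le n-1$ and $w_{q-1}=w_q=R$; for $p=q$ one needs just that $p$ is a sink. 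So $\Theta(\gamma)$ is socle-projective iff $p$ is not a ``through-left'' vertex and $q$ is not a ``through-right'' vertex (and $p$ is a sink if $p=q$).

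Next I would reinterpret the $\star$-condition geometrically. I would first record the structure of fans: the fans of $T$ are exactly the maximal monotone subpaths of $Q_T$ (the maximal runs of a constant letter in $w$), the peak-diagonals are precisely the $\tau_z$ with $z$ a sink of $Q_T$, and the fans containing a given $\tau_x$ have as their peak-diagonals the diagonals $\tau_z$ where $z$ is the first sink of $Q_T$ reached on walking away from $x$ along arrows -- to the left when $w_{x-1}=L$, to the right when $w_x=R$ -- while if $x$ is itself a sink then $\tau_x$ is its own peak-diagonal. Hence the $\star$-condition holds at $\tau_x$ (for $x\in\supp\gamma$) iff $x$ is a sink of $Q_T$, or the first sink to the left of $x$ lies in $\supp\gamma$, or the first sink to the right of $x$ lies in $\supp\gamma$. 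It remains to prove that this holds at all $x\in\supp\gamma$ simultaneously iff $p$ is not through-left and $q$ is not through-right. For ``$\Leftarrow$'' I would use a run argument: if at some $x\in\supp\gamma$ there were no sink of $Q_T$ strictly between $p$ and $x$, then all of $w_p,\dots ,w_{x-1}$ would equal $L$, forcing $p$ to be through-left (or $p=1$ with $w_1=L$, a sink, excluded); symmetrically on the right, so at least one of the two ``first sink'' conditions always holds. For ``$\Rightarrow$'': if $p$ is through-left then $\tau_p$ lies in a single fan whose peak-diagonal $\tau_z$ has $z<p$ and hence is not crossed by $\gamma$, so $\gamma$ is not a $\star$-diagonal; symmetrically for $q$; and the case $p=q$ is immediate. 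Comparing with the previous paragraph gives the equivalence.

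The crux is the ``$\Leftarrow$'' half of this last step -- showing the $\star$-condition cannot fail at an \emph{interior} diagonal once both endpoints of $\supp\gamma$ behave well -- which is precisely the monotone-run propagation above. Some care is also needed at the extremal vertices $1$ and $n$ of $Q_T$, for the degenerate interval $p=q$, and for small $n$ where $T$ may have no fans at all.
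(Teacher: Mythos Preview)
Your proof is correct and takes a genuinely different route from the paper. The paper argues directly at the module level: for the forward direction it shows that any simple submodule $S(x)\hookrightarrow M^\gamma$ forces $\tau_x$ to be a peak-diagonal, using the commutative square on the first arrow $x\to y$ out of $x$ inside $\supp\gamma$; for the converse it picks, in each fan containing a crossed diagonal $\tau_x$, the highest $\tau_y$ still crossed by $\gamma$ and exhibits an explicit embedding $S(y)\hookrightarrow M^\gamma$, contradicting socle-projectivity unless $y$ is already the peak. You instead reduce both conditions to one endpoint criterion on the interval $\supp\gamma=[p,q]$ (``$p$ is not through-left and $q$ is not through-right''), via the dictionary \emph{fans $\leftrightarrow$ maximal monotone runs of the orientation word $w$}. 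The paper's argument is shorter and more conceptual; yours makes the combinatorics of which intervals occur completely explicit, which is useful in its own right.

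One point to tighten in your ``$\Leftarrow$'' run argument: the implication ``no sink of $Q_T$ strictly between $p$ and $x$ forces $w_p=\cdots=w_{x-1}=L$'' is false as a standalone statement (take all $R$'s). What makes it true in your setting is the extra hypothesis that the walk from $x$ along arrows goes \emph{leftward}, i.e.\ $w_{x-1}=L$; then indeed each step stays $L$ until a sink is reached, and overshooting $p$ forces $w_{p-1}=w_p=L$. So you should make the case split on the type of $x$ (through-left, through-right, source) explicit rather than bundling it into ``symmetrically on the right''; each case then cleanly produces the bad endpoint you want, and the source case needs the observation that if both walks overshoot then at least one of $p<x$ or $x<q$ holds (else $p=q=x$, handled separately).
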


\begin{proof} Since $Q_T$ is a Dynkin quiver of type $\mathbb{A}$, then $T=\{\tau_x\hspace{0.1cm}\vert\hspace{0.1cm} x\in (Q_T)_0\}$ is a triangulation without internal triangles. First, we suppose that $\gamma$ is a $\star$-diagonal. Let  $x$ be a vertex in $Q_T$ such that the indecomposable simple $kQ_T$-module  $S(x)$  at vertex $x$ is a submodule of $\Theta(\gamma)=M^{\gamma}$. We shall prove that $S(x)$ is a  projective $kQ_T$-module. Since $\Hom (S(x),M^{\gamma})\neq 0$, then $M^{\gamma}_x=k$, that is, $\tau_x$ crosses $\gamma$. By hypothesis, there exists a fan $\Sigma$ containing $\tau_x$ such that $\gamma$ crosses the peak-diagonal $\tau_z$ of  $\Sigma$. If $x\neq z$ then $\tau_x<\tau_z$, that is, there is a path $p$ in $Q_T$ from $x$ to $z$ whose vertices are in  $\supp\gamma$. Moreover, a nonzero morphism $f=(f_x)_{x\in (Q_T)_0}$ of representations  from $S(x)$ to $M^{\gamma}$ is such that $f_t=0$ for all $t\neq x$ because $S(x)$ is the simple representation at vertex $x$. Let $x\rightarrow y$ be the  arrow in $p$ starting in $x$, then the diagram  

\begin{center}
\begin{tikzcd}[row sep=large, column sep = large]
S(x)_x \arrow[r, "0"] \arrow[d,"f_x" ]
& S(x)_y \arrow[d, "0" ] \\
M^{\gamma}_x \arrow[r, "1" ]
& M^{\gamma}_y
\end{tikzcd}
\end{center}
commutes because $f$ is a morphism of representations of the quiver $Q_T$. Since $S(x)_y$ is zero and $M^{\gamma}_x=M^{\gamma}_y=k$, then $f_x=0$. Therefore, the morphism $f$ is zero, a contradiction. Thus, we conclude that $x=z$, that is, $\tau_x$ is a peak-diagonal. In other words, $x$ is a sink vertex in $Q_T$ and then $S(x)$ is projective. Since all simple submodules of $M^{\gamma}$ are projectives, we have that $\text{soc }M$ is projective.\\

In the other direction, we have that $\Theta(\gamma)$ is socle-projective. Let $\tau_x$ be a diagonal in $T$ crossing $\gamma$. If $\tau_x$ is a peak-diagonal then the definition  of $\star$-diagonal is trivially satisfied. If $\tau_x$ is not a peak-diagonal, we suppose that for all fans $\Sigma$ containing $\tau_x$, $\gamma$ does not cross the peak-diagonal in $\Sigma$.  We have that the number $s$ of fans containing $\tau_x$ is either one or two. In the case $s=1$, let $\tau_y$ be the maximal diagonal in the fan $\Sigma$  which crosses $\gamma$.  Then $\tau_x \leq\tau_y<\tau_z$, where $\tau_z$ is the peak-diagonal in $\Sigma$. In other words, there is a path $p$ from $x$ to $z$ in $Q_T$ passing by $y$,  such that the vertices $x,\dots,y$ in $p$ belong to $\supp\gamma$, whereas the others vertices in $p$ are not in $\supp\gamma$. In particular, $M^{\gamma}_x=M^{\gamma}_y=k$ and $M^{\gamma}_z=0$. Let $S(y)$ be the simple representation of $Q_T$ at vertex $y$. Because the diagram

\begin{center}
\begin{tikzcd}[row sep=large, column sep = large]
S(y)_x \arrow[r, "0"] \arrow[d,"0" ]
& S(y)_y \arrow[r, "0"]\arrow[d, "\lambda" ]& S(y)_z \arrow[d, "0" ] \\
M^{\gamma}_x \arrow[r, "1" ]
& M^{\gamma}_y \arrow[r, "0"] & M^{\gamma}_z
\end{tikzcd}
\end{center}
commutes, we conclude that there is a nonzero injective morphism from $S(y)$ to $M^{\gamma}$. Therefore, $S(y)$ is a non-projective module which is a submodule of $M^{\gamma}$, a contradiction  to the hypothesis.  In the case $s=2$, if $\tau_y$ (respectively $\tau_{y'}$) is the  maximal diagonal in $\Sigma$ (respectively $\Sigma'$)  crossing $\gamma$. By the above arguments, we  conclude that  $S(y)$ and $S(y')$ are non-projective summands of 
$\text{soc }M^{\gamma}$, a contradiction  to the hypothesis.  Therefore, $\gamma$ is a $\star$-diagonal.
\end{proof}

Let $\mathcal{C}_{(T,F)}$ be the full subcategory  of the category of diagonals $\mathcal{C}_T$ generated by all sp-diagonals in $\mathcal{C}_T$. We denote by $E(T,F)$ the set whose elements are the sp-diagonals in $\mathcal{C}_{(T,F)}$, the diagonals in $T$, and the boundary edges in $\Pi_{n+3}$.\\

Since the  irreducible morphisms in  $\mathcal{C}_{(T,F)}$  cannot be factorized through  sp-diagonals, we introduce the notion of a  \textit{pivoting sp-move} from  $\gamma\in E(T,F)$ to  $\gamma'\in E(T,F)$, that is, a composition of pivoting elementary moves of the form  
$$P:\gamma=\gamma_0\xrightarrow{P^{(1)}_v}\gamma_{1}\xrightarrow{P^{(2)}_v}\dots\xrightarrow{P^{(s)}_v}\gamma_{s}=\gamma'$$ with the same pivot $v$ such that $\gamma_1,\dots,\gamma_{s-1}$  are not sp-diagonals  in $\Pi_{n+3}$. Note that the irreducible morphisms in $\mathcal{C}_{(T,F)}$ are precisely the pivoting sp-moves between sp-diagonals.\\

Next, we analyze the relations in the category $\mathcal{C}_{(T,F)}$. These come from the mesh relations in $\mathcal{C}_T$.
 We suppose  that  $\gamma$ and $\gamma'$  are sp-diagonals and that the compositions  $\gamma\xrightarrow{P_1}\beta\xrightarrow{P_2}\gamma'$ and $\gamma\xrightarrow{P_3}\beta'\xrightarrow{P_4}\gamma'$ of two pivoting sp-moves are as in Figure \ref{mesh}. We  have that $$\gamma\xrightarrow{P_1}\beta\xrightarrow{P_2}\gamma'=\gamma\xrightarrow{P_3}\beta'\xrightarrow{P_4}\gamma',$$ 
taking into account the following convention: If one of the intermediate edges ($\beta$ or $\beta'$) is either a boundary edge or a diagonal in $T$, the corresponding term in the identity is replaced by zero.

\begin{figure}[h]
\begin{adjustbox}{scale=0.8,center}
\begin{tikzpicture}
\draw [] circle (2cm) ;
\tkzDefPoint(0,0){x}\tkzDefPoint(2,0){A}
\tkzDefPointsBy[rotation=center x angle 360/15](A,B,C,D,E,F,G,H,I,J,K,L,M,N){B,C,D,E,F,G,H,I,J,K,L,M,N,O}
\tkzDrawPolygon[thin,color=blue](G,O)
\tkzDrawPolygon[thin,color=blue](C,I)
\tkzDrawPolygon[thin,color=blue](C,G)
\tkzDrawPolygon[thin,color=blue](O,I)
\tkzLabelSegment[above](C,G){$\beta$}
\tkzLabelSegment[below](O,I){$\beta'$}

\draw [->, red, thick] (-.7,0.66) arc (-30:45:14pt);
\draw [->, red, thick] (-.7,-.5) arc (-30:45:16pt);

\draw (.5,0.3) node[below] {$\gamma$};
\draw (.6,1.1) node[below] {$\gamma'$};
\draw (.2,1.4) node[below, red, ultra thick] {$\bm P_2$};
\draw (-.4,1.2) node[below, red , ultra thick] {$\bm P_1$};
\draw (.2,-0.1) node[below, red, ultra thick] {$\bm P_3$};
\draw (-.4,0) node[below, red, ultra thick] {$\bm P_4$};

\tkzDrawPolygon[color=blue,loosely dotted](C,H)
\tkzDrawPolygon[color=blue, loosely dotted](G,B)
\tkzDrawPolygon[color=blue,loosely dotted](G,A)
\tkzDrawPolygon[color=blue,loosely dotted](I,B)
\tkzDrawPolygon[color=blue,loosely dotted](I,A)
\tkzDrawPolygon[color=blue,loosely dotted](O,H)
\draw [->, red, thick] (.5,1.4) arc (145:235:7pt);
\draw [->, red, thick] (.5,-0.1) arc (145:235:10pt);

\end{tikzpicture}
\end{adjustbox}
\caption{Mesh relations in $\mathcal{C}_{(T,F)}$.} \label{mesh}
\end{figure}
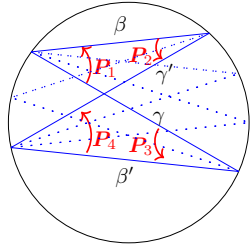

\subsection{The functor $\Omega$ } Let $\P$ be the  poset of type $\mathbb{A}$ associated to the quiver $Q^{F}$, where $Q$ is a quiver of Dynkin type $\mathbb{A}$ and $F$  an alien set for $Q$ and denote by $T$ a triangulation associated to $Q$.  Let us define a $k$-linear additive functor
$$\Omega:\mathcal{C}_{(T,F)}\to\md_{sp}(k\P)$$ from the category of sp-diagonals to the category of finitely generated socle-projective $k\P$-modules such that for  any sp-diagonal $\gamma$ we have $ \Omega(\gamma)=M^{\gamma}=(M_x^{\gamma},{_y}h^{\gamma}_{x})$
where $M^{\gamma}$ is  defined by the following identities:
    $$M_x^{\gamma}= \left\{ \begin{array}{ll}
k &   \mbox{if } x\in\mbox{supp}\hspace{0.1cm}\gamma, 
\\ 0 &  \mbox{otherwise.} 
\end{array}
\right. \hspace{2mm} \mbox{and if }x\preceq y\in\P\mbox{ then } {_y}h_{x}^{\gamma}= \left\{ \begin{array}{lc}
\mbox{id}_k &   \mbox{if }  x,y \in\supp\gamma,\\
 0 &  \mbox{otherwise.} 
\end{array}
\right.$$

Now, we define the functor $\Omega$ on morphisms. By additivity, it is sufficient to define the functor on morphisms between sp-diagonals. Our strategy is to define  the functor on pivoting  sp-moves and then check  that the mesh relations in $\mathcal{C}_{(T,F)}$ hold. For any pivoting sp-move $P:\gamma\to\gamma'$, we define the morphism $$\Omega(P)=~(\Omega(P)_x)_{x\in\P}:(M_x^{\gamma}, {_y}h^{\gamma}_x) \to  (M_x^{\gamma'}, {_y}h^{\gamma'}_x)$$ by the formula 
\[\Omega(P)_x=
\begin{cases}
\mathrm{id}_k,  &\text{if }M^{\gamma}_x=M^{\gamma'}_x=k, \\
0,            &\text{otherwise.}
\end{cases}\]

By definition, $\Omega$ maps compositions of pivoting sp-moves to  compositions  of the images of the pivoting sp-moves. Note that if $\P$ is the poset $\P_Q$ associated  to a Dynkin quiver $Q$ of type $\mathbb{A}$  (without alien arrows) then the functor $\Omega$ is the restriction of the functor $\Theta$ defined in  section \ref{Catdiagonals0} to the full subcategory of  $\mathcal{C}_{T}$ generated by the $\star$-diagonals in $\mathcal{C}_T$.\\
   
Now, we prove that the functor $\Omega$ is well-defined and that it is an equivalence of categories.

\begin{theorem}\label{omega}
$\Omega$ is an equivalence of categories.
\end{theorem}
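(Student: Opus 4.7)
The plan is to establish the four defining properties of an equivalence---well-definedness on objects, well-definedness on morphisms, essential surjectivity, and full faithfulness---leveraging Theorem~\ref{TRalf} together with the structural results of Lemmas~\ref{Ldiagonals} and~\ref{2}. First I need to verify that $\Omega(\gamma)=M^\gamma$ is a genuine $k\P$-module. Since every nonzero transition map ${_y}h^\gamma_x$ is the identity on $k$, consistency with the commutativity relations of $k\P$ reduces to showing that $\supp\gamma$ is $\P$-convex: whenever $a\prec b\prec c$ in $\P$ with $a,c\in\supp\gamma$, one must have $b\in\supp\gamma$. The non-frozen condition of Definition~\ref{spdiagonal}(b) is tailored precisely to preclude the $\P$-convexity failures that an alien arrow could introduce, since such an arrow creates a three-element chain in $\P$ that is only a two-element path in $Q$. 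Once $M^\gamma$ is known to be a $k\P$-module, the $\star$-diagonal condition and Lemma~\ref{Ldiagonals} yield socle-projectivity of the underlying $kQ$-module, from which socle-projectivity as a $k\P$-module is deduced by observing that every simple $k\P$-submodule is in particular a simple $kQ$-submodule, and projectivity of a simple $k\P$-module is equivalent to its supporting vertex being a sink of the Hasse quiver, which is exactly what the $\star$-diagonal condition guarantees via Lemma~\ref{Ldiagonals}.

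Next, I verify that $\Omega$ respects the mesh relations depicted in Figure~\ref{mesh}: since each component $\Omega(P)_x$ is either an identity or zero on a one-dimensional space, each mesh relation reduces to a boolean check on supports identical to the one performed in the proof of Theorem~\ref{TRalf}(c), with the extra bookkeeping that whenever an intermediate edge lies in $T$ or on the boundary the corresponding term vanishes, consistent with the zero convention. This handles well-definedness on morphisms and secures functoriality of $\Omega$.

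For essential surjectivity, Lemma~\ref{2} asserts that every indecomposable $M\in\md_{sp}(k\P)$ has support connected in $Q$, all $M_x=k$, and all transition maps identities. By Theorem~\ref{TRalf}(a) there is a unique (up to isomorphism) diagonal $\gamma\notin T$ with $\supp(\Theta(\gamma))=\supp(M)$; I then argue that $\gamma$ is automatically an sp-diagonal. The $\star$-diagonal condition follows from Lemma~\ref{Ldiagonals} applied to $\Theta(\gamma)$, once the $kQ$-socle of $M$ is identified as projective, while the non-frozen condition follows from the $\P$-convexity of $\supp M$, which in turn is forced by $M$ being a well-defined $k\P$-module. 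This produces $\gamma\in\mathcal{C}_{(T,F)}$ with $\Omega(\gamma)\cong M$.

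Finally, for full faithfulness I use Lemma~\ref{Lralf1}, which bounds $\dim\Hom_{\mathcal{C}_T}(\gamma,\gamma')\leq 1$, so $\dim\Hom_{\mathcal{C}_{(T,F)}}(\gamma,\gamma')\leq 1$ for sp-diagonals $\gamma,\gamma'$. On the module side, the explicit form of $M^\gamma$ forces $\dim\Hom_{\md_{sp}(k\P)}(M^\gamma,M^{\gamma'})\leq 1$, with a generator---when it exists---given by the identity map on the intersection of supports; a direct comparison of the relative positions described in Lemma~\ref{Lralf1} with the existence of such a compatible nonzero homomorphism matches the two hom spaces, and $\Omega$ sends a generator to a generator. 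The principal obstacle I anticipate is the bookkeeping in the first step, disentangling precisely how each of the two conditions in Definition~\ref{spdiagonal} supplies the corresponding piece of the socle-projectivity requirement over the enlarged algebra $k\P$, since the $kQ$-socle and the $k\P$-socle of $M^\gamma$ need not coincide.
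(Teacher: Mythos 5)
Your outline tracks the paper's structure closely on objects, essential surjectivity, and (partially) hom-spaces, and your alternative route to socle-projectivity of $M^\gamma$---restrict to $kQ$, apply Lemma~\ref{Ldiagonals}, then observe that every simple $k\P$-submodule is a simple $kQ$-submodule supported at a sink of $Q$ and hence projective over $k\P$---is a legitimate and slightly slicker variant of the paper's direct verification of condition (b) of Proposition~\ref{sp module}. The $\P$-convexity reformulation of condition (a) is also correct, and your identification of the non-frozen condition as the ingredient that handles chains through alien arrows matches the paper's three-case analysis.

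There is, however, a genuine gap in the morphism step. You treat ``well-definedness on morphisms'' as subsumed by the mesh-relation check, but those are two separate verifications. Before you can even ask whether $\Omega$ respects mesh relations, you must first show that for a single pivoting sp-move $P:\gamma\to\gamma'$, the collection $(\Omega(P)_x)_{x\in\P}$ is a morphism of $k\P$-modules, i.e.\ that $\Omega(P)_y\circ {_y}h^{\gamma}_x={_y}h^{\gamma'}_x\circ\Omega(P)_x$ for every covering relation $x\prec y$ in $\P$. With scalar entries this reduces to ruling out the ``half-supported'' configuration where $M^\gamma_x=M^{\gamma'}_y=k$ but exactly one of $M^\gamma_y$, $M^{\gamma'}_x$ is nonzero. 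This is not a boolean triviality: when the covering $x\prec y$ comes from an alien arrow $\alpha\in F$, one needs the $\star$-diagonal condition to force both $\gamma$ and $\gamma'$ to cross the peak-diagonal $\tau_z$ of the fan of $\tau_y$, and then a geometric argument about pivots from $\tau_z$ to $\tau_x$ to derive a contradiction; when $x\prec y$ in $\P_Q$, one uses the orientation of arrows in $Q$ versus the direction of pivoting. Calling this ``identical to the one performed in the proof of Theorem~\ref{TRalf}(c)'' is not accurate---that proof is for the cluster-tilted algebra $kQ_T/I$ with no alien arrows, and the argument here genuinely depends on Definition~\ref{aliensetDef}(b) and the non-frozen condition. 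Your fullness argument is also thinner than the paper's: asserting $\dim\Hom_{\md_{sp}(k\P)}(M^\gamma,M^{\gamma'})\le 1$ and that ``the two hom spaces match'' requires justification; the paper instead restricts a given $k\P$-morphism to a $kQ$-morphism, inverts $\Theta$, and checks the resulting pivoting path lies in the full subcategory $\mathcal{C}_{(T,F)}$, which avoids the dimension comparison altogether and should be supplied if you keep the counting route.
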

\begin{proof}Recall here that $\P$ and $\P_Q$ are two different posets, that they have the same vertices and that $\P$ is obtained from $\P_Q$ by adding edges to the Hasse diagram corresponding to the alien arrows in $F$. In particular, $x\prec y$ in $\P_Q$ implies $x\prec y$ in $\P$.\\

In order to prove that $M^{\gamma}\in\md_{sp}(k\P)$ we have to proof  the conditions (a) and (b) in  Proposition \ref{sp module}. To prove   $(a)$ it is enough to consider the non trivial situation when $x\prec y\prec w $ in  $\P$ such that $x,w\in\supp\gamma$ and $y\notin\supp\gamma$. First we note that, by Lemma \ref{Lralf2}, if $x\prec y\prec w$ in $\P_Q$ then $y\in\supp\gamma$ which is contradictory.  Therefore, we have that  $x\npreceq y$ or $y\npreceq w$ in $\P_Q$. We consider the following cases (recall that we always suppose $x\prec y\prec w $ in  $\P$ such that $x,w\in\supp\gamma$ and $y\notin\supp\gamma$).\\

\textbf{Case 1} $y\prec w$ in  $\P_Q$ and $x\npreceq y$ in $\P_Q$. In this case,  there exists an alien arrow $\alpha:x'\rightarrow y'$ on vertices of a $z$-subquiver $Q^{(z)}$ of $Q$ such that $x\preceq x'\prec z$ and  $y'\preceq y\prec w\prec z$ in $\P_Q$. Since $w\in\supp \gamma$ and $\tau_z$ is the maximal diagonal in the unique fan of $\tau_w$ then  $\gamma$ crosses $\tau_{z}$. Thus, since $x\in\supp\gamma$ Lemma \ref{Lralf2} implies that $\gamma$ crosses $\tau_{x'}$,   and since  $\gamma$ is non-frozen, then $\gamma$ crosses $\tau_{y'}$. Again using  Lemma \ref{Lralf2}, we obtain that $\gamma$ crosses $\tau_{y}$, that is, $y\in\supp\gamma$ which is contradictory.\\

\textbf{Case 2} $x\prec y$ in  $\P_Q$ and  $y\npreceq w$ in $\P_Q$. In this case,  there exists an alien arrow $\alpha:y'\to w'$ on vertices of a $z$-subquiver $Q^{(z)}$ such that $x\prec y\preceq y'\prec z$ and  $w'\preceq w\prec z$ in $\P_Q$. Since $w\in\supp \gamma$ and $\tau_z$ is the maximal diagonal in the unique fan of $\tau_w$,  then  $\gamma$ crosses $\tau_{z}$. Thus, since $x\in\supp\gamma$ Lemma \ref{Lralf2} implies that $\gamma$ crosses $\tau_y$, in other words, $y\in\supp\gamma$ which cannot be.\\

\textbf{Case 3} $x\npreceq y$ and $y\npreceq w$ in $\P_Q$. In this case,  there exist two alien arrows $\alpha:x'\to y'$ and $\alpha:y''\to w'$  on vertices of a $z$-subquiver $Q^{(z)}$ of $Q$
such that $x\preceq x'\prec w'\preceq w\prec z$ and  $y'\preceq y\preceq y''\prec z$ in $\P_Q$. Since $w\in\supp \gamma$ and $\tau_z$ is the maximal diagonal in the unique fan of $\tau_w$, then  $\gamma$ crosses $\tau_{z}$. Thus,  Lemma \ref{Lralf2} implies that $\gamma$ crosses $\tau_{x'}$, and since  $\gamma$ is non frozen we conclude that $\gamma$ crosses $\tau_{y'}$. Again using  Lemma \ref{Lralf2} we obtain that $\gamma$ crosses $\tau_{y}$, that is, $y\in\supp\gamma$ which is contradictory.\\

We have shown that if  $x\prec y\prec w $ in  $\P=\P_{Q^F}$ such that $x,w\in\supp\gamma$ then  $y\in\supp\gamma$. Thus, ${_w}h^{\gamma}_y={_y}h^{\gamma}_x={_w}h^{\gamma}_x=\mbox{id}_k$ and condition (a) holds. To prove condition $(b)$, let $x$ be an element of $\P^{-}=\P\setminus\max\P$. If $x\notin\supp\gamma$ then clearly $\ker {_z}h^{\gamma}_x=0$ for all $z\in\max\P$ such that $x\prec z$. If $x\in\supp\gamma$ then ${_{z'}}h^{\gamma}_x=\mathrm{id}_k$ for some $z'\in\max\P$, where $\tau_{z'}$ is the peak-diagonal in some fan  containing  $\tau_x$. Thus, $\stackbin[z\in\max\P]{}{\bigcap}\ker {_z}h_x=0$ for all $x\in\P^{-}$ such that $x\prec z$.  This shows that $\Omega(\gamma)=M^{\gamma}$ is indeed an object in $\md_{sp}(k\P)$.\\

Let us now check that $\Omega(P)$ is well defined for every pivoting sp-move $P:\gamma\to\gamma'$. Indeed, it is enough to show that for any relation $x\prec y$ such that $y$ covers $x$ in $\P$ the diagram 
\[\begin{tikzcd}[row sep=large, column sep = large]
M^{\gamma}_x \arrow[r, "{_y}h^{\gamma}_x"] \arrow[d,"\Omega(P)_x" ]
& M^{\gamma}_y \arrow[d, "\Omega(P)_y" ] \\
 M^{\gamma'}_x \arrow[r, "{_y}h^{\gamma'}_x" ]& M^{\gamma'}_y \end{tikzcd}\] 
 commutes. Note that the result holds if $M^{\gamma}_x=0$ or $M^{\gamma'}_y=0$ and also if both $M^{\gamma}_y$ and $M^{\gamma'}_x$ are null spaces. Suppose now that $M^{\gamma}_x=M^{\gamma'}_y=k$. If $M^{\gamma}_y=M^{\gamma'}_x=k$, then all four maps are $\mbox{id}_k$ and the diagram commutes. 
 The only remaining case is if exactly one of  $M^{\gamma}_y$, $M^{\gamma'}_x$ is nonzero. We will show that this cannot happen. 
 Suppose that $M^{\gamma'}_x=0$ and  $M^{\gamma}_y=k$, that is, $x,y\in\supp\gamma$, $y\in\supp\gamma'$ and $x\notin\supp\gamma'$. Since $y$ covers $x$ in $\P$, there exists an arrow $\alpha:x\to y$ in $Q^F$. If $x\prec y$ in $\P_Q$ then $\alpha$ is an arrow in $Q$,
 that is, $\tau_x$ and $\tau_y$ share a vertex of the polygon and are connected by a pivoting elementary move.  
 Since $P:\gamma\to\gamma'$ is a pivoting sp-move we get that $\tau_x$ crosses $\gamma$, that $\tau_x$ and $\gamma'$ have a common point on the boundary of the polygon and that $\tau_y$ crosses $\gamma$ and $\gamma'$. This implies that $\tau_y$ is clockwise from $\tau_x$ and that contradicts the orientation $x\rightarrow y$ in the quiver $Q$ (see Figure \ref{counterclockwise}). 
 Next, we suppose that $x\nprec y$ in $\P_Q$,
  then $\alpha:x\to y$ is an alien arrow in $F$ 
  with $x$ and $y$ in $\text{Supp }I(z)$ for some sink vertex $z$ in $Q$. Now, by Definition \ref{aliensetDef} part (b),  $y$ is not a source vertex in $Q$ unless $y$ is an extremal vertex in $Q$. Thus, there is at most one arrow in $Q$ with starting point $y$, and therefore there is exactly on fan $\Sigma$ containing $\tau_y$ and $\tau_z$ is its peak-diagonal. By Definition~\ref{spdiagonal}, both $\gamma$ and $\gamma'$ cross $\tau_z$, because they are $\star$-diagonals crossing $\tau_y$. \\
  
\begin{adjustbox}{scale=0.8,center}
\begin{tikzpicture}
\draw [] circle (2cm) ;
\tkzDefPoint(0,0){x}\tkzDefPoint(2,0){A}
\tkzDefPointsBy[rotation=center x angle 360/15](A,B,C,D,E,F,G,H,I,J,K,L,M,N){B,C,D,E,F,G,H,I,J,K,L,M,N,O}
\tkzDrawPolygon[thin,color=blue](D,I)
\tkzDrawPolygon[thin,color=blue](A,I)
\tkzDrawPolygon[thin,color=red](L,C)
\tkzDrawPolygon[thin,color=red](L,G)

\tkzLabelSegment[below, blue](A,I){$\gamma$}
\tkzLabelSegment[below, blue](D,I){$\gamma'$}

\tkzLabelSegment[below right, red](L,C){$\tau_x$}
\tkzLabelSegment[below left, red](L,G){$\tau_z$}
\end{tikzpicture}
\end{adjustbox}\\

On the other hand, there is a pivoting path from $\tau_z$ to $\tau_x$ in $\Pi_{n+3}$, since $x$ belongs to $\text{Supp }I(z)$. But this is impossible, because if $\tau\to\tau_x$ is  a pivot, then $\tau$ does not cross $\gamma'$. The other case where $M^{\gamma'}_x=k$ and  $M^{\gamma}_y=0$ is proved in a similar way.\\

To show that the functor $\Omega$ is well defined, it only remains to check  the  mesh relations. Indeed, let $\gamma\xrightarrow{P_1} \beta$, $\beta\xrightarrow{P_2} \gamma'$, $\gamma\xrightarrow{P_3}\beta'$, $\beta'\xrightarrow{P_4}\gamma'$ be pivoting sp-moves as in Figure \ref{mesh} with  $\gamma$,$\gamma'$ sp-diagonals and $\beta\neq\beta'$ sp-diagonals, diagonals in $T$ or boundary edges. Note that, we can exclude the case where $\beta$ and $\beta'$ are both diagonals in the triangulation $T$ or both boundary edges because in this case either $\gamma$ or $\gamma'$ is a diagonal in $T$. Without loss of generality, we may assume from now on that $\beta$ is an sp-diagonal.  Suppose first that $\beta'$ is an sp-diagonal; then one has to check the commutativity of the following diagram
\begin{center}
\begin{tikzcd}[row sep=large, column sep = large]
M^{\gamma}_x \arrow[r, "\Omega(P_1)_x"] \arrow[d,"\Omega(P_3)_x" ]
& M^{\beta}_x \arrow[d, "\Omega(P_2)_x" ] \\
M^{\beta'}_x \arrow[r, "\Omega(P_4)_x" ]
& M^{\gamma'}_x
\end{tikzcd}
\end{center}
for all $x\in\P$. Again, the only non trivial case happens when $M_x^{\gamma}=M_x^{\gamma'}=k$. In this case we also have $M_x^{\beta}=M_x^{\beta'}=k$ because any diagonal crossing both $\gamma$ and $\gamma'$ must also  crosses $\beta$ and $\beta'$. Thus all maps are $\mbox{id}_k$ and the diagram commutes. Suppose now that $\beta'$ is  a boundary edge or diagonal in $T$.
 Then  we have to show that the composition $M_x^{\gamma}\xlongrightarrow{\Omega(P_1)} M_x^{\beta}\xlongrightarrow{\Omega(P_2)} M_x^{\gamma'}$ is zero for all $x\in\P$.  Clearly if $\beta'$ is a boundary edge or diagonal in $T$ then no diagonal $\tau\in T$ can cross both $\gamma$ and $\gamma'$ then $\Hom(\Omega(\gamma), \Omega(\gamma'))=0.$ \\

In order to prove that $\Omega$ is dense we fix an indecomposable $M\in\md_{sp}(k\P)$. Then by Lemma \ref{2} part (b), Lemma \ref{Lralf2} and Theorem \ref{TRalf} part (a) there exists a diagonal $\gamma\notin T$ such that $\supp \gamma= \supp M$. We show that $\gamma$ is an sp-diagonal. Indeed, since the socle of $M$ is projective, Lemma \ref{Ldiagonals} implies that $\gamma$ is a $\star$-diagonal.  Moreover, given an alien arrow $\alpha:x\rightarrow y$ in $F$, with $x$ and $y$ in $\text{Supp }I(z)$ for some sink vertex $z$ in $Q_0$  such that $x,z\in\supp M$ then ${_z}h_x=\mbox{id}_k$. By Proposition \ref{sp module} part $(a)$, we have that ${_z}h_x={_z}h_y \cdot {_y}h_x$, thus $y\in\supp M$.  Therefore $\gamma$ crosses $\tau_y$ and thus  $\gamma$ is a non-frozen diagonal. We conclude that $\gamma$ is an sp-diagonal and that $\Omega(\gamma)=M$.\\

To show that  $\Omega$ is faithful, it is enough to prove that the image of a nonzero morphism between sp-diagonals is a nonzero morphism in  $\md_{sp}(k\P)$. Indeed, let $P\in\Hom_{\mathcal{C}_{(T,F)}}(\gamma,\gamma')$ be a nonzero morphism in $\mathcal{C}_{(T,F)}$. Then $P$ also is a nonzero morphism in $\mathcal{C}_T$. Lemma \ref{Lralf1} implies that there exists a diagonal $\tau_x\in T$ crossing $\gamma$ and $\gamma'$ as in Figure \ref{relativeposition}. In particular, $M_x^{\gamma}=M_x^{\gamma'}=k$, and therefore $\Omega(P)_x=\mbox{id}_k\neq 0$.\\

Finally, we show that functor $\Omega$ is full. To do  so, let $\Omega(\gamma)\xlongrightarrow{g} \Omega(\gamma') $ be a nonzero morphism in $\md_{sp}(k\P)$. Then $g=(g_x)_{x\in Q_0}$, where $g_x$ is a linear map  from $\Omega(\gamma)_x$ to $\Omega(\gamma')_x$.   The map $\hat{g}=(\hat{g}_x)_{x\in Q_0}$ from $\Theta(\gamma)$ to $\Theta(\gamma')$ such that $\hat{g}_x=g_x$ is a morphism of representations in $\md kQ$. Indeed,  for each arrow $\alpha:x\rightarrow y$ in $Q_1$, we have $x\prec y$ in $\P$. Since $g$ is morphism in $\md_{sp}k\P$, then the diagram 
\begin{center}
\begin{tikzcd}[row sep=large, column sep = large]
\Omega(\gamma)_x \arrow[r,"{_y}h^{\gamma}_x"] \arrow[d,"g_x" ]& \Omega(\gamma)_y \arrow[d, "g_y" ] \\
\Omega(\gamma')_x \arrow[r, "{_y}h^{\gamma'}_x" ]
& \Omega(\gamma')_y
\end{tikzcd}
\end{center}
commutes. Note that the elements in $\P$ are the vertices in $Q_0$.  Moreover, if $\gamma $ is an sp-diagonal then
  the representations $\Theta(\gamma)=~(\Theta(\gamma)_x, f^{\gamma}_{\alpha})$ in $\md kQ$ and $\Omega(\gamma)=(\Omega(\gamma)_x,{_y}h^{\gamma}_x )$ in $\md_{sp}(k\P)$ have the same $k$-vector spaces $\Omega(\gamma)_x= \Theta(\gamma)_x$ for all $x\in\P$ and  the same maps ${_y}h^{\gamma}_x=f^{\gamma}_{\alpha}$  for each $\alpha:x\rightarrow y$ in $Q_1$ (the map $f^{\gamma}_{\alpha}$ is not defined when $\alpha$ is an alien arrow for $Q$). 
  Thus  we have a commutative diagram  
\begin{center}
\begin{tikzcd}[row sep=large, column sep = large]
\Theta(\gamma)_x \arrow[r,"f^{\gamma}_\alpha"] \arrow[d,"\hat{g}_x" ]& \Theta(\gamma)_y \arrow[d, "\hat{g}_y" ] \\
\Theta(\gamma')_x \arrow[r, "f^{\gamma'}_\alpha" ]
& \Theta(\gamma')_y
\end{tikzcd}
\end{center}
and hence the map $\hat{g}$ is a morphism in $\md kQ$. Under the equivalence of categories $\Theta:\mathcal{C}_T\to\md kQ_T$ of Theorem \ref{TRalf}, the morphism $\hat{g}$ corresponds to a morphism $P\in\Hom_{\mathcal{C}_T}(\gamma,\gamma')$, with $\Theta(P)=\hat{g}$. Since  $\gamma$ and $\gamma'$ are sp-diagonals in $\mathcal{C}_T$,  $P$  also is a morphism in  the full subcategory $\mathcal{C}_{(T,F)}$ of  $\mathcal{C}_T$.  The definition of the functors  $\Theta$ and $\Omega$ on morphisms implies that $\Omega(P)=g$.\end{proof}

The following corollary is an direct consequence of the arguments used in Theorem~\ref{omega} and section \ref{Catdiagonals}.
\begin{corollary}\label{corollary}
Let $\P$ be a poset of type $\mathbb{A}$ associated to the  quiver $Q^{F}$  as in Proposition \ref{0} and let $\mathcal{C}_{(T,F)}$ be the corresponding category of sp-diagonals. Then
\begin{itemize}
\item[(a)] The irreducible morphisms of $\mathcal{C}_{(T,F)}$ are direct sums of the generating morphisms given by pivoting sp-moves.
\item[(b)] Let $\gamma\xlongrightarrow{P_1}\beta\xlongrightarrow{P_2}\gamma'$ be a composition of two pivoting sp-moves as in  Figure~\ref{mesh}, where $\gamma$, $\gamma'$, and $\beta$ are sp-diagonals. Then 
\begin{itemize}
\item[(i)] The sequence $0\longrightarrow \gamma\longrightarrow \beta\oplus\beta'\longrightarrow \gamma'\longrightarrow 0$ is an AR-sequence if
 $\beta'$ is a sp-diagonal.
\item[(ii)] The sequence $0\longrightarrow \gamma\longrightarrow \beta\longrightarrow \gamma'\longrightarrow 0$ is an AR-sequence if $\beta'$ is either a boundary edge or a diagonal in $T$.
\item[(iii)] If $\beta'\notin E(T,F)$  then $\gamma'$ is an indecomposable projective in $\mathcal{C}_{(T,F)}$ and $\gamma$ is  an indecomposable injective in $\mathcal{C}_{(T,F)}$.
\end{itemize}
\end{itemize} 
\end{corollary}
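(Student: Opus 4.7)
The plan is to deduce Corollary \ref{corollary} by transporting the mesh-theoretic structure of $\mathcal{C}_T$ (Theorem \ref{TRalf}) across the equivalence $\Omega$ of Theorem \ref{omega}, and by carefully tracking what happens when the ambient mesh in $\mathcal{C}_T$ is restricted to the full subcategory $\mathcal{C}_{(T,F)}$ of sp-diagonals.

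For part (a), since $\mathcal{C}_{(T,F)}$ is a full $k$-linear subcategory of $\mathcal{C}_T$ and (by Theorem \ref{TRalf}(b)) every irreducible morphism in $\mathcal{C}_T$ is a direct sum of pivoting elementary moves, any morphism between sp-diagonals decomposes as a composition of such moves. An irreducible morphism in $\mathcal{C}_{(T,F)}$ is one that cannot be factored non-trivially through another object of $\mathcal{C}_{(T,F)}$, i.e.\ through an intermediate sp-diagonal. By the very definition of a pivoting sp-move as a maximal composition of pivoting elementary moves at a fixed pivot whose intermediate diagonals fail to be sp-diagonals, the irreducible morphisms in $\mathcal{C}_{(T,F)}$ are precisely direct sums of pivoting sp-moves. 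The equivalence $\Omega$ then transfers this statement to $\md_{sp}(k\P)$.

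For part (b), I would analyze the mesh of Figure \ref{mesh} case by case. In case (i) the mesh relation $P_2P_1 = P_4P_3$ in $\mathcal{C}_T$ lifts directly to the AR-sequence $0\to \gamma \to \beta \oplus \beta' \to \gamma' \to 0$ in $\mathcal{C}_T$ by Theorem \ref{TRalf}(c); all four terms live in $\mathcal{C}_{(T,F)}$ and applying $\Omega$ yields the desired AR-sequence in $\md_{sp}(k\P)$, since a $k$-linear equivalence preserves irreducible morphisms and almost split sequences. In case (ii) the convention from section \ref{Catdiagonals} replaces the $\beta'$-term by zero in the mesh relation, so that the ambient AR-sequence collapses to $0\to \gamma \to \beta \to \gamma' \to 0$ and the same transport argument applies.

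The main obstacle is case (iii), where $\beta'$ is a genuine diagonal of $\Pi_{n+3}$ not in $E(T,F)$, so the AR-sequence $0\to \gamma \to \beta \oplus \beta' \to \gamma' \to 0$ exists in $\mathcal{C}_T$ but cannot be restricted to $\mathcal{C}_{(T,F)}$ because $\beta'$ is absent there. The plan is to show that this absence prevents the existence of any AR-sequence in $\mathcal{C}_{(T,F)}$ ending in $\gamma'$, forcing $\gamma'$ to be projective, and dually for $\gamma$. Concretely, I would argue that if an AR-sequence $0\to \gamma''\to X\to \gamma'\to 0$ existed in $\mathcal{C}_{(T,F)}$, then under $\Omega$ it would give an AR-sequence in $\md_{sp}(k\P)$; pulling back via the ambient AR-structure in $\mathcal{C}_T$ (which is uniquely determined by $\gamma'$), one would recover precisely the mesh of Figure \ref{mesh}, forcing $\beta'$ to belong to $E(T,F)$, a contradiction. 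The dual argument shows $\gamma$ is injective in $\mathcal{C}_{(T,F)}$. The subtle point to handle carefully is that ``injective'' in $\md_{sp}(k\P)$ need not coincide with injective in $\md(k\P)$, so I would phrase the statement purely via the absence of source/sink maps in the AR-quiver of $\mathcal{C}_{(T,F)}$, invoking the existence of source and sink maps in $\repr\simeq\md_{sp}(k\P)$ recalled in section~\ref{preliminaries}.
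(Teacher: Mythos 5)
Your arguments for part (a) and for parts (b)(i)--(ii) are correct and match the route the paper sketches: restrict the mesh structure of $\mathcal{C}_T$ (Theorem \ref{TRalf}(b),(c)) to the full subcategory $\mathcal{C}_{(T,F)}$, note that a pivoting sp-move is by definition a maximal pivoting path that avoids intermediate sp-diagonals, and transport AR-sequences across the equivalence $\Omega$, invoking the paper's convention that a $\beta'$-term which is a boundary edge or a diagonal in $T$ is set to zero. The paper itself presents Corollary~\ref{corollary} as a ``direct consequence'' of Theorem~\ref{omega} and the surrounding discussion, so your level of detail already exceeds what the paper records.

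The gap is in part (b)(iii). You argue that if an AR-sequence $0\to\gamma''\to X\to\gamma'\to 0$ existed in $\mathcal{C}_{(T,F)}$, then ``pulling back via the ambient AR-structure in $\mathcal{C}_T$ (which is uniquely determined by $\gamma'$), one would recover precisely the mesh of Figure~\ref{mesh}.'' This step is not justified: the AR-translate in the exact subcategory $\mathcal{C}_{(T,F)}\simeq\md_{sp}(k\P)$ does not a priori coincide with the AR-translate in $\mathcal{C}_T$, and indeed it cannot do so globally, since the two categories have different sets of projectives and injectives. So it is not clear that $\gamma''$ equals $\gamma=r^{-}(\gamma')$, nor that the middle term $X$ decomposes with $\beta'$ as a summand, which is exactly what your contradiction relies on. You do flag the related subtlety (relative vs.\ absolute injectives), but you don't supply the missing argument. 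What is needed is either a direct analysis of the sink map into $\gamma'$ in $\mathcal{C}_{(T,F)}$ (it is a direct sum of pivoting sp-moves $\beta_i\to\gamma'$ around the two endpoints of $\gamma'$, and one must show this map is a monomorphism when $\beta'\notin E(T,F)$), or an appeal to the Auslander--Smal\o\ theory of AR-sequences in functorially finite extension-closed subcategories to relate $\tau_{\mathcal{C}_{(T,F)}}\gamma'$ to $\tau_{\mathcal{C}_T}\gamma'=\gamma$, after which the non-existence of $\beta'$ in $E(T,F)$ forces the obstruction. As written, (iii) is asserted by analogy rather than proved.
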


\begin{example} Let $Q$ and $F$ as in Example \ref{alienset1}. Then the triangulation $T$ associated to $Q$ has the form \\

\begin{adjustbox}{scale=0.7,center}
\begin{tikzpicture}
\draw(0,0) node(w){\gonu{2}{F}{D}{$\tau_1$}};
\draw(0,0) node(w){\gonuo{2}{F}{C}{$\tau_2$}};
\draw(0,0) node(w){\gonuo{2}{F}{B}{$\tau_3$}};
\draw(0,0) node(w){\gonuo{2}{G}{B}{$\tau_4$}};
\draw(0,0) node(w){\gonuo{2}{B}{H}{$\tau_5$}};
\draw(0,0) node(w){\gonuo{2}{B}{I}{$\tau_6$}};

\end{tikzpicture}
\end{adjustbox}
The AR-quiver $\Gamma(\mathcal{C}_T)$ of the category $\mathcal{C}_T$ has the shape  \\

\begin{adjustbox}{scale=0.4,center}
\begin{tikzcd}[ampersand replacement=\&, row sep= tiny, column sep = normal]
\& \& \gonured{1}{E}{G}{}\arrow[rd]\&\& \gonu{1}{H}{F}{}\arrow[rd] \&\& \gonu{1}{I}{G}{}\arrow[rd]\&\& \gonu{1}{H}{A}{}\\ \& \gonured{1}{D}{G}{}\arrow[rd]\arrow[ru]\arrow[rr,loosely dashed, dash,blue] \&\& \gonured{1}{E}{H}{}\arrow[rd]\arrow[ru] \&\& \gonu{1}{I}{F}{}\arrow[rd]\arrow[ru]\&\& \gonu{1}{A}{G}{}\arrow[ru]\\ \gonured{1}{C}{G}{}\arrow[rd]\arrow[ru]\arrow[rr,loosely dashed, dash,blue] \&\& \gonured{1}{D}{H}{}\arrow[rd]\arrow[ru] \arrow[rr,loosely dashed, dash,blue] \&\& \gonured{1}{E}{I}{}\arrow[rd]\arrow[ru] \&\& \gonu{1}{F}{A}{}\arrow[ru] \\  \& \gonured{1}{C}{H}{}\arrow[rd]\arrow[ru] \&\& \gonured{1}{D}{I}{}\arrow[rd]\arrow[ru]\arrow[rr,loosely dashed, dash,blue] \&\& \gonured{1}{E}{A}{}\arrow[rd]\arrow[ru]\\ \&\& \gonu{1}{C}{I}{}\arrow[ru]\arrow[rd]\& \&  \gonured{1}{D}{A}{}\arrow[ru]\arrow[rd]\&\&  \gonu{1}{E}{B}{}\arrow[rd] \\ 
\& \& \& \gonu{1}{C}{A}{}\arrow[ru]\&\& \gonu{1}{D}{B}{}\arrow[ru]\&\& \gonu{1}{E}{C}{}
 \end{tikzcd}
\end{adjustbox}
\\

Here, we have drawn   the polygons with sp-diagonals using red color, that is, the diagonals   $\gamma$  such that $\gamma$ crosses  $\tau_3$ and if $\gamma$ crosses $\tau_5$ then $\gamma$ crosses $\tau_2$. Hence, the AR-quiver $\Gamma(\mathcal{C}_{(T,F)})$ of the category $\mathcal{C}_{(T,F)}$ is the red part of $\Gamma(\mathcal{C}_T)$, where  dotted lines have been drawn  to describe the action of  the AR-translation in $\Gamma(\mathcal{C}_{(T,F)})$. 
 \end{example}

\begin{example} Let $\P=\P_{Q^F}$  be the three-peak poset  of  type $\mathbb{A}$ defined in Example \ref{posetstypeA} which is the poset associated to the quiver  $Q^F$  in Example \ref{alien set}. Recall that,  $\P$ can be viewed as a Dynkin quiver of type $\mathbb{E}_7$.  Thus, the AR-quiver  $\Gamma(\md(k\P))$ of  the module category $\md (k\P)$ can be built using the knitting algorithm (see \cite{schiffler}) and it has the form\\

\begin{adjustbox}{scale=0.3,center, rotate = 0}    
\begin{tikzcd}[ampersand replacement=\&, row sep=normal, column sep =normal]
\&\smrd{\colorb{7}}\&\&\smrd{\colorb{6}\\\colorb{4}\\\colorb{5}} \&\& \smrd{& 3\\ 1&& 4\\ 2} \&\&  \smrd{&3&&6\\1&&4&&7\\&&5}\&\& \sm{3&&6\\&44\\&5}\arrow[dddd ,loosely dashed, dash, red]\\ %
\&\&\smrud{& \colorb{6}\\\colorb{4}&& \colorb{7}\\  \colorb{5} }\&\&\smrud{&  3 &  6\\  1 && 44\\  2&&  5} \&\& \smrud{&33&&6\\11&&44&&7\\2&&5}\&\& \smrud{&33&&66\\1&&444&&7\\&&55} 
\\
\smr{\colorb{5}}\&\smrud{\colorb{4}\\ \colorb{5}}\arrow[r]\& \smr{4}\&\smrud{& 3&& 6\\ 1&&44&&  7\\  2&& 5}\arrow[r]\& \smr{&\colorb{3}&&\colorb{6}\\\colorb{1}&&\colorb{4}&&\colorb{7}\\\colorb{2}&&\colorb{5}}\arrow[r]\&\smrud{& 33&& 66\\11 && 444&&7\\2&& 55}\arrow[r]\& \sm{&3&&6\\1&&44\\&&5}\arrow[r] \& \smrud{&333&&66\\11&&4444&&7\\2&&55}\arrow[r]\& \sm{&33&&6\\1&&44&&7\\2&&5} \arrow[r] \& \sm{&333&&666\\11&&4444&&77\\2&&55} \\%
\&\&\smrud{& \colorb{3} \\\colorb{1}&& \colorb{4}\\\colorb{2}&& \colorb{5}} \&\& \smrud{&3&& 6\\1 && 44&& 7\\  && 5}\&\& \smrud{&33&&66\\1&&444&&7\\2&&55}\&\& \smrud{&33&&66\\11&&444&&7\\2&&5}\\%
\&\smrud{\colorb{1}\\\colorb{2}}\&\&\smrud{&3\\1&&4\\&&5}\&\&\smrud{3&&6\\&44&&7\\&5}\&\& \smrud{&3&&66\\1&&44&&7\\2&&5}k \&\& \sm{&33&&6\\11&&44\\2&&5}
\\
\smru{\colorb{2}}\&\& \smru{1}\&\& \smru{\colorb{3}\\\colorb{4}\\\colorb{5}}\&\& \smru{&6\\4&&7}\&\& \smru{&\colorb{3}&&\colorb{6}\\\colorb{1}&&\colorb{4}\\\colorb{2}&&\colorb{5}}
\end{tikzcd}
\end{adjustbox}

\vspace{.5cm}

\begin{adjustbox}{scale=0.32,center, rotate = 0}    
\begin{tikzcd}[ampersand replacement=\&, row sep=normal, column sep =normal]
\smrd{3&&6\\&44\\&5}\arrow[dddd ,loosely dashed, dash, red]\&\& \smrd{&3&&6\\1&&4&&7\\2}\&\& \smrd{&3&&6\\1&&4\\&&5}\&\& \smrd{3\\4}\&\& \smrd{\colorb{6}\\\colorb{7}}  \\ %
\&  \smrud{&33&&66\\1&&444&&7\\2&&55}\&\&\smrud{&33&&66\\11&&44&&7\\2&&5}\&\& \smrud{&33&&6\\1&&44\\&&5}\&\& \smrud{3&&6\\&4&&7}\&\& \sm{6}
\\
 \smrud{&333&&666\\11&&4444&&77\\2&&55}\arrow[r] \& \sm{&3&&66\\1&&44&&7\\&&5}\arrow[r]\& \smrud{&333&&666\\11&&4444&&7\\2&&55}\arrow[r]\& \sm{&33&&6\\1&&44\\2&&5}\arrow[r] \&\smrud{&333&&66\\11&&444&&7\\2&&5}\arrow[r] \& \sm{&3&&6\\1&&4&&7}\arrow[r] \& \smrud{&33&&66\\1&&44&&7\\&&5}\arrow[r] \& \sm{\colorb{3}&&\colorb{6}\\&\colorb{4}\\& \colorb{5}}\arrow[r]\& \smrud{3&&6\\&4}    \\%
\& \smrud{&333&&66\\11&&444&&7\\2&&55}\&\&  \smrud{&33&&66\\1&&444&&7\\&&5}\&\& \smrud{&33&&66\\1&&44&&7\\2&&5}\&\& \smrud{&3&&6\\1&&4} \&\& \sm{3} \\%
\smrud{&33&&6\\11&&44\\2&&5}\&\& \smrud{&33&&6\\1&&44&&7\\&&5}\&\& \smrud{3&&66\\&44&&7\\&5}\&\& \smrud{&3&&6\\1&&4\\2}\&\& \smru{3\\1}
\\
\& \smru{&3\\1&&4} \&\&  \smru{\colorb{3}&&\colorb{6}\\&\colorb{4}&&\colorb{7}\\&\colorb{5}}\&\& \smru{6\\4}\&\& \smru{\colorb{3}\\\colorb{1}\\\colorb{2}} 
\end{tikzcd}
\end{adjustbox}

In the diagram, we have drawn  the dimensions of indecomposable socle-projective modules with red color. Hence, the AR-quiver $\Gamma(\md_{sp}(k\P))$ of the category of finitely generated socle-projective modules $\md_{sp}(k\P)$ has the form\\

\begin{adjustbox}{scale=0.3,center}
\begin{tikzcd}[ampersand replacement=\&, row sep= normal, column sep = normal]
\&\smrd{ \colorb{7}}\&\&\smrd{ \colorb{6}\\ \colorb{4}\\ \colorb{5}} \&\& \sm{ \colorb{3}\\ \colorb{1}\\ \colorb{2}}\\
\&\&\smrud{&  \colorb{6}\\ \colorb{4}&&  \colorb{7}\\  \colorb{5} }\&\&\smrud{& \colorb{3}&&  \colorb{6}\\ \colorb{1}&&  \colorb{4}\\ \colorb{2}&&  \colorb{5}} \&\\
\smr{ \colorb{5}}\&\smrud{ \colorb{4}\\ \colorb{5}} \& \&\smrud{&  \colorb{3}&&  \colorb{6}\\ \colorb{1}&&  \colorb{4}&&  \colorb{7}\\ \colorb{2}&&  \colorb{5}}\&\&\sm{& \colorb{3}&&  \colorb{6}\\ &&  \colorb{4}\\ &&  \colorb{5}}\\
\&\&\smrud{&  \colorb{3} \\ \colorb{1}&&  \colorb{4}\\ \colorb{2}&&  \colorb{5}} \&\& \smrud{ \colorb{3}&&  \colorb{6}\\ &  \colorb{4}&&  \colorb{7}\\  &  \colorb{5}}\&\\
\&\smru{ \colorb{1}\\ \colorb{2}}\&\&\smru{ \colorb{3}\\ \colorb{4}\\ \colorb{5}}\&\&\sm{ \colorb{6}\\ \colorb{7}}\\
\smru{ \colorb{2}}
\end{tikzcd}
\end{adjustbox}

On the other hand, since the triangulation $T$ associated to the quiver $Q$ was described  in Example \ref{triangulation1},   
the AR-quiver $\Gamma(\mathcal{C}_T)$ of the category of diagonals $\mathcal{C}_T$  has the form \\

 \begin{adjustbox}{scale=0.4,center}
\begin{tikzcd}[ampersand replacement=\&, row sep = tiny , column sep = small]
\gonred{1}{A}{I}{}\arrow[rd]\&\& \gon{1}{B}{J}{}\arrow[rd] \&\& \gon{1}{C}{A}{}\arrow[rd]  \&\& \gon{1}{D}{B}{}\arrow[rd] \&\&  \gon{1}{C}{E}{}\\
\& \gon{1}{B}{I}{}\arrow[rd]\arrow[ru] \&\& \gonred{1}{C}{J}{}\arrow[rd]\arrow[ru]\&\& \gon{1}{D}{A}{}\arrow[rd]\arrow[ru]\&\& \gonred{1}{E}{B}{}\arrow[rd]\arrow[ru]\\
\gonred{1}{B}{H}{}\arrow[rd]\arrow[ru]\&\& \gonred{1}{C}{I}{}\arrow[rd]\arrow[ru]\&\& \gon{1}{D}{J}{}\arrow[rd]\arrow[ru]\&\& \gon{1}{E}{A}{}\arrow[rd]\arrow[ru]\&\& \gon{1}{F}{B}{}\\
\&  \gonred{1}{C}{H}{}\arrow[rd]\arrow[ru] \&\&\gon{1}{D}{I}{}\arrow[rd]\arrow[ru]\&\& \gonred{1}{E}{J}{}\arrow[rd]\arrow[ru]\&\& \gon{1}{F}{A}{}\arrow[ru]  \\
\&\& \gon{1}{D}{H}{}\arrow[rd]\arrow[ru] \&\& \gonred{1}{E}{I}{}\arrow[rd]\arrow[ru] \&\& \gonred{1}{F}{J}{}\arrow[rd]\arrow[ru] \\
\& \gonred{1}{D}{G}{}\arrow[ru]\arrow[rd] \&\& \gonred{1}{E}{H}{}\arrow[ru]\arrow[rd]  \&\& 
\gonred{1}{F}{I}{}\arrow[ru]\arrow[rd]\&\& \gon{1}{G}{J}{}\\ \&\& \gonred{1}{E}{G}{}\arrow[ru]\&\& \gonred{1}{F}{H}{}\arrow[ru]\&\& \gonred{1}{G}{I}{}\arrow[ru]
\end{tikzcd}
\end{adjustbox}
\\

Here, we have drawn  the polygons with sp-diagonals using red color. Hence, the AR-quiver $\Gamma(\mathcal{C}_{(T,F)})$  of the category $\mathcal{C}_{(T,F)}$ is identified with  the AR-quiver $\Gamma(\md_{sp}(k\P))$.
\end{example}  

\section{Associated subalgebra of the cluster algebra} Let $\P$ a poset of type $\mathbb{A}$ and let $Q^{F}$ be the quiver associated to $\P$ as in Proposition \ref{0}.  We denote by  $\mathcal{A}=\mathcal{A}(\bm x, Q)$ the cluster algebra associated to the initial seed $(\bm x, Q)$~\cite{fomin}. It is well known that the initial cluster variables in $\bm x$ correspond to the shift of indecomposable projectives in the cluster category (see \cite{BMRRT} ).  Let $\mathcal{A}(\P)$ be the subalgebra of $\mathcal{A}$ generated by the cluster variables $x_{\gamma}$ such that $\gamma$ is an sp-diagonal in the category $\mathcal{C}_{(T,F)}$ together with the cluster variables in the initial cluster $\bm x$. It is a natural to ask under which conditions we have $\mathcal{A}(\P)=\mathcal{A}$. A partial answer is given in Theorem \ref{subalgebra}.

\begin{lemma} Let $Q$ be a quiver of  tree type with $n$ vertices and let $\mathcal{A}=\mathcal{A}(\bm x,Q)$ be the cluster algebra associated to $Q$ with initial cluster $\bm x=\lbrace x_1,\dots,x_n\rbrace$.  If $\mathcal{A}'$ is the subalgebra of $\mathcal{A}$ generated by the cluster variables $x_1,\dots,x_n,x_{P_1},\dots, x_{P_n}$, where for all $i=1,\dots,n$, $x_{P_i}$ is the cluster variable associated to the indecomposable projective $kQ$-module $P_i$  in $\md kQ$, then $\mathcal{A}'=\mathcal{A}$. 
\end{lemma}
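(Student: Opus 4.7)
The plan is to use the categorification of $\mathcal{A}$ by the cluster category $\mathcal{C}_Q$ together with the cluster multiplication formula, and to induct along the Auslander--Reiten quiver of $\md kQ$.

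By Buan--Marsh--Reiten--Reineke--Todorov combined with Caldero--Chapoton, for an acyclic quiver $Q$ every cluster variable of $\mathcal{A}$ is either an initial variable $x_i$ or of the form $X_M$ for some indecomposable rigid $kQ$-module $M$, where $X_?$ denotes the cluster character. Since $\mathcal{A}$ is generated, as a $k$-algebra, by its cluster variables, it suffices to show $X_M\in\mathcal{A}'$ for every indecomposable rigid $M$. The base case $M=P_i$ holds by hypothesis. For non-projective indecomposable rigid $M$, the AR-sequence $0\to\tau M\to E\to M\to 0$ in $\md kQ$ gives, via the cluster multiplication formula (Caldero--Keller, Palu),
\[
X_{\tau M}\cdot X_M \;=\; X_E + 1,
\]
where $X_E=\prod_N X_N^{m_N}$ is the monomial in the characters of the indecomposable summands $N$ of $E$ (with multiplicities $m_N$). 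Since $\mathcal{A}$ is an integral domain, this yields $X_M=(X_E+1)/X_{\tau M}$ inside $\mathcal{A}$.

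I would then fix a topological ordering $\prec$ of the vertices of the AR-quiver $\Gamma(\md kQ)$ such that any arrow $N\to M$ in $\Gamma$ implies $N\prec M$; this is possible because $\Gamma$ is acyclic as an oriented graph. Both $\tau M$ and every indecomposable summand $N$ of $E$ are predecessors of $M$ in $\Gamma$, hence strictly precede $M$ in $\prec$. Induction along $\prec$, with the projectives as the base case, then gives $X_M\in\mathcal{A}'$ for every preprojective indecomposable rigid module.

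The main obstacle arises when $Q$ is a non-Dynkin tree, since then $\Gamma(\md kQ)$ also contains regular tubes and a separate preinjective component, neither of which is reached by the induction above starting from projectives via $\tau^{-1}$. For exceptional modules in these components, I would replace AR-sequences by more general short exact sequences $0\to A\to B\to C\to 0$ of exceptional modules with $\dim_k \ext^1(C,A)=1$ and apply the generalized Caldero--Keller formula $X_A X_C = X_B+X_{B'}$, together with the fact that every exceptional module over a hereditary tree algebra can be connected to the projectives by a finite chain of such extensions. In the Dynkin case every indecomposable is preprojective, so the first induction already exhausts every cluster variable and no extra work is needed.
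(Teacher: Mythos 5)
Your approach is genuinely different from the paper's: you propose to reach every cluster variable by inducting along the AR-quiver of $\md kQ$ via the cluster multiplication formula, whereas the paper invokes the acyclicity criterion \cite[Corollary 1.21]{BFZ} (so that one only needs to show that $\mathcal{A}'$ contains the initial cluster together with the $n$ one-step mutations $x'_1,\dots,x'_n$) and then proves this by induction on the number of leaves of the tree, manipulating the exchange relations explicitly. Unfortunately, your induction has a genuine gap at its central step.

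From the AR-sequence you obtain $X_{\tau M}\cdot X_M = X_E + 1$, hence the identity $X_M = (X_E+1)/X_{\tau M}$ holds in the ambient field of fractions, and your inductive hypothesis gives $X_{\tau M},X_E\in\mathcal{A}'$. But this does \emph{not} imply $X_M\in\mathcal{A}'$: a subalgebra is not closed under division, even when the quotient happens to lie in the larger ring $\mathcal{A}$. You would have to show that $(X_E+1)/X_{\tau M}$ can be rewritten as a polynomial in the chosen generators, and nothing in the argument supplies that. Notice how the paper avoids exactly this pitfall: after arriving at an expression containing $\frac{p_t^+ + p_t^-}{x_t}$, it does \emph{not} simply divide, but first establishes that $x_{P_t}p_t^+$ (a product of generators, hence manifestly in $\mathcal{A}'$) equals $\frac{p_t^+ + p_t^-}{x_t}+p_t^-\delta$, and then deduces $x'_t = \frac{p_t^+ + p_t^-}{x_t}\in\mathcal{A}'$ as a \emph{difference} of two elements of $\mathcal{A}'$. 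Your argument needs an analogous device (or needs to replace the AR-induction with the BFZ acyclicity criterion, which bounds the verification to $2n$ explicit elements). The sketch for non-Dynkin trees has the same division problem and is, in addition, too vague to evaluate (``connected to the projectives by a finite chain of such extensions'' is not justified and the resulting exchange relations again only express the target variable as a quotient).
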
  
  
\begin{proof} Because of \cite[Corollary 1.21]{BFZ} it suffices to show that $\mathcal{A}'$ contains the initial cluster $x_1,\dots,x_n$ as well as the $n$ cluster variables $x'_1,\dots,x'_n$ obtained from the initial cluster by a single mutation. We proceed by induction on the number $n$ of vertices in $Q$. The case $n=1$ is trivial. Now, let us consider $Q$ a tree with $n$ vertices, then $Q$ has $n-1$ arrows. Let $w$ be a leaf of $Q$ and define $Q'$ to be the full subquiver of $Q$ whose vertices are $Q_0\setminus\lbrace w\rbrace$. Then $Q$ is obtained from $Q$ by adding one vertex $w$ and one arrow $\alpha_w$ that starts or ends at $w$. We have two cases: either $(i)$ $\alpha_w:t\rightarrow w$ or $(ii)$ $\alpha_w:w\rightarrow t$ for some $t\in Q_0$. We recall the so-called exchange relation 
\begin{equation}\label{equ1}
x'_k x_k=p^{-}_k + p^{+}_k,
\end{equation}
defined for any vertex $k$ in $Q$, where $p^{-}_k=\prod_{\alpha:r\rightarrow k}x_r$ and $p^{+}_k=~\prod_{\beta:k\rightarrow r}x_r$. Here the product $p^{-}_k$ (respectively $p^{+}_k$) is taken over all arrows $\alpha\in Q_1$ (respectively $\beta\in Q_1$) that terminate (respectively start) in vertex $k$.  We shall proof that the variables $x'_w$ and $x'_{t}$ belong to $\mathcal{A}'$. In  case $(i)$, $w$ is a sink vertex and then $x'_w=x_{P_w}$. Hence, $x'_w\in\mathcal{A}'$. Additionally, following the knitting algorithm, we have that 
\begin{equation}\label{equ2}
x_{P_t}x_t=1+p^{-}_t \prod_{\beta: t\rightarrow r}x_{P_r},
\end{equation} 
 where the product is taken over all arrows  $\beta\in Q_1$ that start in vertex $t$.  We multiply  (\ref{equ2})  by $p_t^{+}$ and we obtain  $x_{P_t}x_t p_t^{+}=p_t^{+}+p^{-}_t \prod_{\beta: t\rightarrow r}x_rx_{P_r}$. Since $\alpha_w$ is an arrow from $t$ to $w$, then $x_{P_t}x_t p_t^{+}=p_t^{+}+p^{-}_t x_w x_{P_w}\delta$ 
 where the product $\delta= \prod_{\beta: t\rightarrow r\neq w}x_rx_{P_r}$ is taken over all arrows  $\beta\in Q_1$ that start in vertex $t$ and terminate in a vertex $r\neq w$.  Also,   $x_{P_t}x_t p_t^{+}=p_t^{+}+p^{-}_t(1+x_t)\delta$ because $x_w x_{P_w}=x'_w x_w= 1+x_t$. Since $x_{P_t},p^+_t\in\mathcal{A}'$ we have 

$$x_{P_t}p^+_t=\dfrac{p_t^{+}+p^{-}_t (1+x_t)\delta}{x_t}=\dfrac{p_t^{+}+p^{-}_t}{x_t}+p^{-}_t\delta\in\mathcal{A}'.$$

Since $p^{-}_t\delta$ belongs to $\mathcal{A}'$, equation (\ref{equ1}) implies $x'_t\in\mathcal{A}'$. In case $(ii)$, we have 
\begin{equation}\label{equ4}
x_{w}x_{P_w}=1+x_{P_t}.
\end{equation} 
Multiplying  (\ref{equ4}) by $x_t$ and  using (\ref{equ2}) we  deduce that $$x_{w}x_{P_w}x_t= x_t+1+p^{-}_t \prod_{\beta: t\rightarrow r}x_{P_r}.$$ 

Since $x_{P_w},x_t\in\mathcal{A}'$ then $$x_{P_w}x_t=\frac{x_t+1+p^{-}_t \prod_{\beta: t\rightarrow r}x_{P_r}}{x_w}\in\mathcal{A}'.$$
Moreover, since there is an arrow $\alpha_w$ from $w$ to $t$ then $x_w$ is a factor of $p_t^{-}$; thus, $x'_w=\frac{1+x_t}{x_w}\in\mathcal{A}'.$ Analogous to the proof of the case $(i)$, we can prove that $x'_t\in\mathcal{A}'$. \\
As a consequence of the hypothesis of induction on the quiver $Q'$ the variables $x'_s$ with vertex $s\neq t$ in $Q'_0$ belong to $\mathcal{A}'$. Thus, \cite[Corollary 1.21]{BFZ} implies the result.
\end{proof}

\begin{theorem}\label{subalgebra}
Let $\P$ be a poset of type $\mathbb{A}$ associated to the quiver $Q^{\emptyset}$ as in Proposition \ref{0} and let $\mathcal{A}(\P)$ be the subalgebra of $\mathcal{A}$  associated to $\P$. Then $\mathcal{A}(\P)=\mathcal{A}$.
\end{theorem}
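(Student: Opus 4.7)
The plan is to reduce Theorem~\ref{subalgebra} directly to the preceding lemma, whose hypothesis requires only that the cluster variables associated to the indecomposable projective $kQ$-modules already lie in $\mathcal{A}(\P)$. So the core task is to exhibit each projective as coming from an sp-diagonal.

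First, I would observe that because $F=\emptyset$, the non-frozen condition of Definition~\ref{spdiagonal} is vacuous, so the sp-diagonals in $\mathcal{C}_{(T,\emptyset)}$ are precisely the $\star$-diagonals. Moreover, since $Q$ is a Dynkin quiver of type $\mathbb{A}_n$ (and hence a tree), there are no commutativity relations, so $k\P=kQ$ is hereditary. Submodules of projectives over a hereditary algebra are projective, so in particular the socle of every indecomposable projective $kQ$-module $P_i$ is projective; equivalently, $P_i\in\md_{sp}(k\P)$ for every vertex $i$ of $Q$.

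Next I would invoke Theorem~\ref{TRalf}(e), which identifies the indecomposable projective objects of $\mathcal{C}_T$ with the diagonals in $r^+(T)$; under the equivalence $\Theta$, these diagonals correspond bijectively to $P_1,\ldots,P_n$. By Lemma~\ref{Ldiagonals}, each such diagonal is a $\star$-diagonal, hence (since $F=\emptyset$) an sp-diagonal. Consequently the cluster variables $x_{P_1},\ldots,x_{P_n}$ belong to $\mathcal{A}(\P)$ by the very definition of $\mathcal{A}(\P)$, as does the initial cluster $\bm x$.

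Finally, applying the preceding lemma (which is applicable because $Q$ is of tree type), we conclude that the subalgebra of $\mathcal{A}$ generated by $\bm x$ together with $x_{P_1},\ldots,x_{P_n}$ coincides with $\mathcal{A}$. Since this subalgebra is contained in $\mathcal{A}(\P)\subseteq\mathcal{A}$, we obtain $\mathcal{A}(\P)=\mathcal{A}$. No substantial obstacle arises in this approach; the only point requiring care is ensuring the identification between projective $kQ$-modules and the diagonals $r^+(\tau_i)$ is compatible with the cluster-variable labelling, which follows from the standard dictionary between $\mathcal{C}_T$, $\md kQ$, and cluster characters used throughout Section~\ref{preliminaries}.
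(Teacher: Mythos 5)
Your proof is correct and follows essentially the same route as the paper: both reduce to the preceding lemma by observing that for $F=\emptyset$ the sp-diagonals are exactly the $\star$-diagonals, and then invoking Theorem~\ref{TRalf}(e) to identify the indecomposable projectives with the diagonals in $r^{+}(T)$, which are sp-diagonals. Your extra step of deriving $\star$-diagonality from Lemma~\ref{Ldiagonals} together with the hereditariness of $kQ$ merely makes explicit what the paper asserts with the word ``clearly.''
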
  
  
\begin{proof}
In this case, the poset $\P$ is viewed as the quiver $Q$ of type $\mathbb{A}$. Then, the subcategory $\mathcal{C}_{(T,F)}$ of $\mathcal{C}_T$ is given by $\star$-diagonals because $F=\emptyset$ and it is equivalent to the category $\md_{sp}kQ$ of socle-projective $kQ$-modules (see Theorem \ref{omega}).  By  Theorem \ref{TRalf} part (e) the indecomposable projectives in $\md kQ$ can be identified with diagonals $r^{+}(T)$ in the category $\mathcal{C}_T$ which are clearly $\star$-diagonals. Hence, the category     $\mathcal{A}(\P)$ contains the clusters variables described in the hypothesis of the above Lemma. Moreover,  $Q$ is a tree quiver. As a consequence, $\mathcal{A}(\P)=\mathcal{A}$.
\end{proof}

\section*{Acknowledgements}
The second author  would like to express his sincere gratitude to his advisors 
Agust\'{i}n Moreno Ca\~{n}adas and Hern\'{a}n Giraldo
for helpful suggestions and valuable discussions  on the topic.

\bibliographystyle{amsplain}

\end{document}